\tikzset{join/.code=\tikzset{after node path={%
\ifx\tikzchainprevious\pgfutil@empty\else(\tikzchainprevious)%
edge[every join]#1(\tikzchaincurrent)\fi}}}
\tikzset{>=stealth',every on chain/.append style={join},
         every join/.style={->}}
\tikzstyle{labeled}=[execute at begin node=$\scriptstyle,
\newtheorem{assumption}{Assumption}
\newtheorem{example}{Example}
\newtheorem{remark}{Remark}
\newtheorem{definition}{Definition}
\newtheorem{theorem}{Theorem}
\newtheorem{lemma}{Lemma}
\newtheorem{proposition}{Proposition}
\title{On the Absence of Spurious Local Trajectories in {Time-varying} Nonconvex Optimization}
\author{Salar Fattahi, Cedric Josz, Yuhao Ding, Reza Mohammadi, Javad Lavaei, and Somayeh Sojoudi}
\begin{document}
\maketitle

\renewcommand{\thefootnote}{\fnsymbol{footnote}}

\footnotetext{A shorter version of this paper has been submitted for conference publication~\cite{online_spurious_tech_2020_2}. Sections \ref{sec:dyn_mat}, \ref{sec:theo_found} and \ref{sec:sys_Jac} are the new additions to the paper.}

\footnotetext{Salar Fattahi is with University of Michigan, Ann Arbor (email: {fattahi@umich.edu}). Cedric Josz is with Columbia University (email: {cj2638@columbia.edu}).  Yuhao Ding, Reza Mohammadi, Javad Lavaei, and Somayeh Sojoudi are with the University of California, Berkeley (email: {yuhao\_ding@berkeley.edu}, {mohammadi@berkeley.edu}, {lavaei@berkeley.edu}, {sojoudi@berkeley.edu}). }

\renewcommand{\thefootnote}{\arabic{footnote}}


\begin{abstract}
In this paper, we study the landscape of an online nonconvex optimization problem, for which the input data vary over time and the solution is a trajectory  rather than a single point.  To understand the complexity of finding a global solution of this problem,  we introduce the notion of \textit{spurious (i.e., non-global)   local trajectory} as a generalization to the notion of spurious local solution in nonconvex (time-invariant) optimization. We develop an ordinary differential equation (ODE) associated with a time-varying nonlinear dynamical system which, at limit, characterizes the spurious local solutions of the time-varying optimization problem. We prove that the absence of spurious local trajectory is closely related to the transient behavior of the developed system. In particular, we show that if the problem is time-varying,  the data variation may force all of the ODE trajectories initialized at arbitrary local minima at the initial time to  gradually converge to the global solution trajectory.  We study the Jacobian of the dynamical system along  a local minimum trajectory and show how its eigenvalues are manipulated by the natural data variation in the problem, which may consequently trigger escaping poor local minima over time. 
\end{abstract}

%

\section{Introduction}

Sequential decision making with time-varying data is at the core of most of today's problems. For example, the optimal power flow (OPF) problem in the electrical grid should be solved every 5 minutes in order to match the supply of electricity with a demand profile that changes {over time~\cite{low2014convex}}. {Other examples include the training of {dynamic} neural networks~\cite{gupta2004static}, dynamic matrix recovery~\cite{xu2016dynamic, xu2017simultaneous}, time-varying multi-armed bandit problem~\cite{zeng2016online}, robot navigation and
obstacle avoidance \cite{belkhouche2007autonomous}, and many other applications \cite{simonetto2020time}}. Indeed, most of  these problems are large-scale and should be solved in real-time, which strongly motivates the need for fast algorithms in such optimization frameworks.

A recent line of work has shown that a surprisingly large class of data-driven and nonconvex optimization problems---including matrix completion/sensing, phase retrieval, and dictionary learning, robust principal component analysis---has a \textit{benign landscape}, i.e., every local solution is also global~\cite{bhojanapalli2016global, ge2016matrix, zhang2019sharp, fattahi2018exact}. A local solution that is not globally optimal is called {\it spurious}. At the crux of the results on the absence of spurious local minima is the assumption on the static and time-invariant nature of the optimization. Yet, in practice, many real-world and data-driven problems are time-varying and require online optimization. This observation naturally gives rise to the following question: 

\textit{Would fast local-search algorithms escape spurious local minima in online nonconvex optimization, similar to their time-invariant counterparts?}

In this paper, we attempt to address this question by developing a  control-theoretic framework for analyzing the landscape of online and time-varying optimization.
In particular, we demonstrate that even if a time-varying optimization may have undesired point-wise local minima at almost all times, the variation of its landscape over time would enable simple local-search algorithms to escape these spurious local minima. 
Inspired by this observation, this paper provides a new machinery to analyze the global landscape of online decision-making problems by drawing tools from optimization and control theory.

We consider a class of nonconvex and online optimization problems where the input data vary over time. First, we introduce the notion of \textit{spurious local trajectory} as a generalization to the point-wise spurious local solutions. We show that a time-varying optimization can have point-wise spurious local minima at every time step, and yet, it can be free of spurious local trajectories. By building upon this notion, we consider a general class of nonconvex optimization problems and model their local trajectories via an  ordinary differential equation (ODE)  representing a time-varying nonlinear dynamical system. We show that the absence of the spurious local trajectories in this time-varying optimization is equivalent to the convergence of all solutions in its corresponding ODE. Based on this equivalence, {we analyze different classes of time-varying optimization problems and present sufficient conditions under which, despite possibly having point-wise spurious local minima at all times, the time-varying problem is free of spurious local trajectories.} This implies that the time-varying nature of the problem is essential for the absence of spurious local trajectories. Finally, we analyze the Jacobian of the ODE along a local minimum trajectory and show how its eigenvalues are manipulated by the data variation. 

\subsection{Related Works}

\textbf{Benign landscape:} Nonconvexity is inherent to many problems in machine learning; from the classical compressive sensing and matrix completion/sensing~\cite{donoho2006most,candes2010matrix,candes2009exact} to the more recent problems on the training of deep neural networks~\cite{li2018visualizing}, they often possess nonconvex landscapes. Reminiscent from the classical complexity theory, this nonconvexity is perceived to be the main contributor to the intractability of these problems. In many (albeit not all) cases, this intractability implies that in the worst-case instances of the problem, spurious local minima exist and there is no efficient algorithm capable of escaping them. However, a lingering question remains unanswered: are these worst-case instances common in practice or do they correspond to some pathological or rare cases?

Answering this question has been the subject of many recent studies. In particular, it has been shown that nearly-isotropic classes of problems in matrix completion/sensing \cite{bhojanapalli2016global, ge2016matrix, ge2017no}, robust principle component analysis \cite{fattahi2018exact, josz2018theory}, and dictionary recovery \cite{sun2017complete} have {benign landscape}, implying that they are free of spurious local minima.  It has also been proven recently in~\cite{pmlrv80kleinberg18a} that under some conditions, the stochastic gradient descent may escape the sharp local minima in the landscape. 
At the core of the aforementioned results is the assumption on the static and time-invariant nature of the landscape. In contrast, many real-world problems should be solved sequentially over time with time-varying input data. For instance, in the optimal power flow problem, the electricity consumption of the consumers changes hourly~\cite{tang2017,tang2019bis}. Therefore, it is natural to study the landscape of such time-varying nonconvex optimization problems, by taking into account their dynamic nature.

\vspace{2mm}

%

\textbf{Time-varying dynamical systems:} Recently, there has been a growing interest in analyzing the performance of numerical algorithms from a control-theoretical perspective~\cite{su2014differential, wibisono2016variational, zhang2018direct, chen2018neural, scieur2017,xu2018}. Roughly speaking, the general idea behind these approaches is to analyze the convergence of a specific algorithm by first modeling its limiting behavior as an autonomous (time-invariant) ODE that describes the evolution of a dynamical system, and then studying its stability properties. As a natural extension, one would generalize this approach to online optimization by modeling its limiting behavior as a non-autonomous ODE corresponding to a time-varying dynamical system. However, the stability analysis of time-varying dynamical systems is highly convoluted in the nonlinear case. We note that several necessary and sufficient conditions for the stability of linear time-varying systems were proposed  in \cite{zhou2016}. A generalized time-varying Lyapunov function was proposed in \cite{aeyels1998} and has been applied in \cite{teel1999} to study the stability of an averaged system. Furthermore, slowly time-varying systems are investigated in \cite{dacunha2007}.

\section{Case Studies}

In this section, we present  empirical studies on the dynamic landscapes of two problems in power systems and machine learning:  optimal power flow and dynamic matrix recovery.
	\subsection{Electrical Power Systems}
\label{sec:power}

\begin{figure*}[ht]
    \centering
 \begin{subfigure}[b]{0.43\textwidth}
            \centering
            \includegraphics[width=\textwidth]{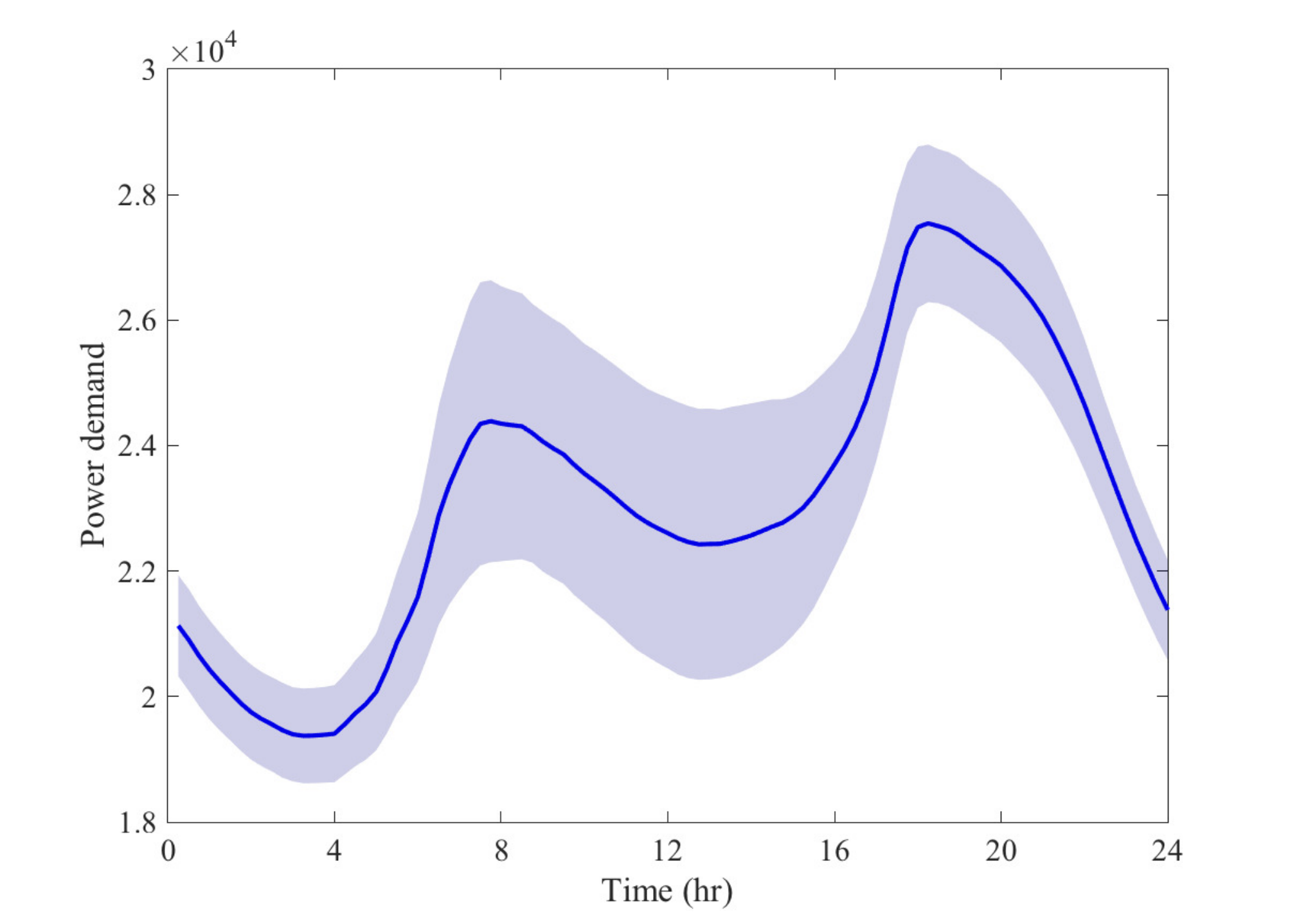}
             \caption{California average load profile for January 2019.}
    \label{fig_CA_load_profile}
    \end{subfigure}\hspace{2mm}
\begin{subfigure}[b]{0.43\textwidth}
            \centering
            \includegraphics[width=\textwidth]{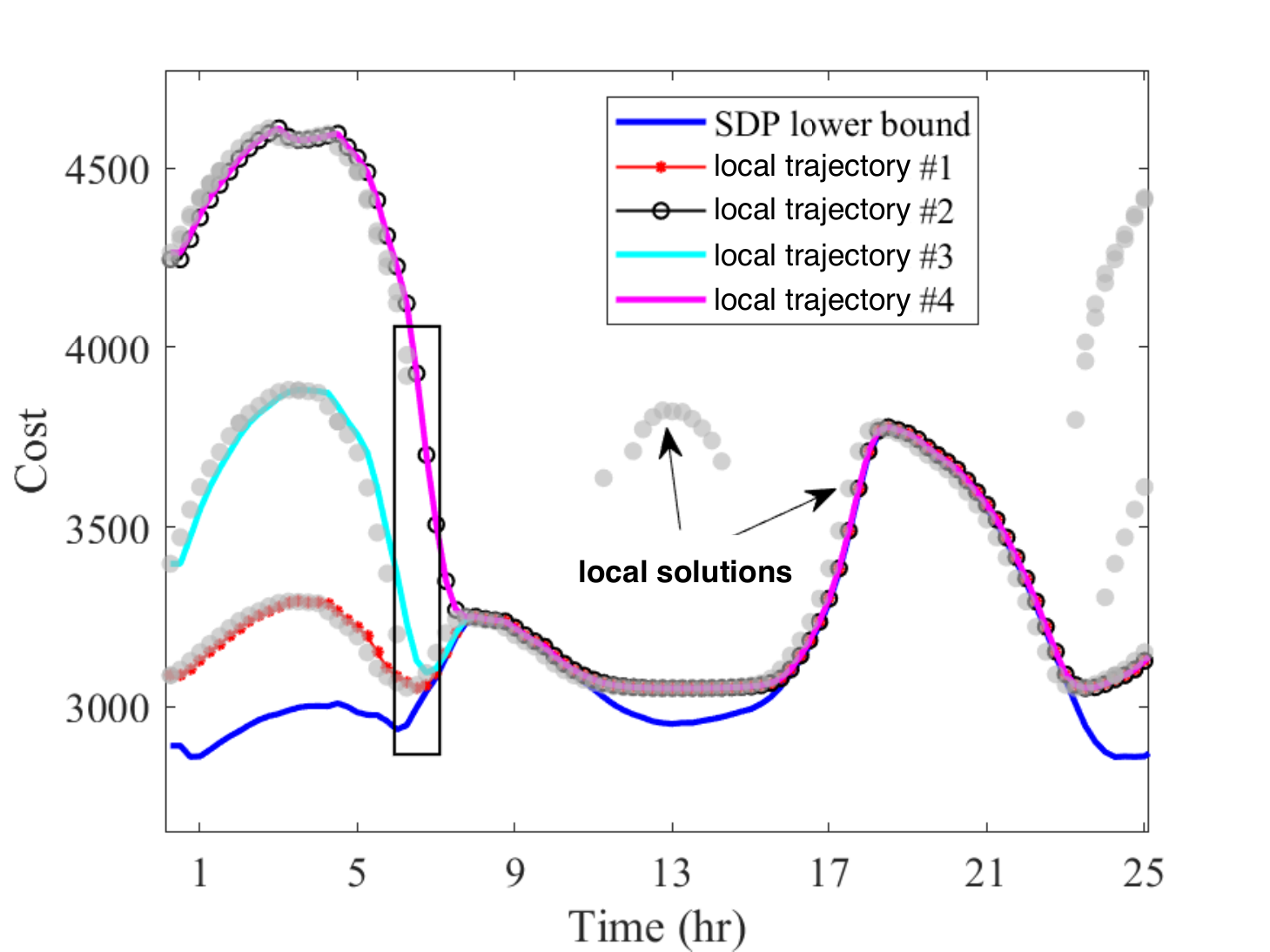}
             \caption{Solution trajectories of  time-varying optimal power flow.}
    \label{fig_CA_periodic_delta005}
    \end{subfigure}
    \caption{Case study in power systems (data collected from \url{http://www.caiso.com}).}
    \end{figure*}


In the optimal power flow problem, the goal is to match the supply of electricity with a time-varying demand profile, while satisfying the network, physical, and technological constraints. In practice, the problem is solved sequentially over time with the constraint that at every time-step, the solution cannot be significantly different from the one obtained in the previous time-step due to the so-called ramping constraints of the generators. We consider the IEEE 9-bus system~\cite{localSol_5bus} and initialize the system from the global solution, as well as three different spurious local solutions. We then change the load over time based on the California average load profile for the month of January 2019 (Figure~\ref{fig_CA_load_profile}). The optimal power flow problem is then solved sequentially using local search every 15 minutes for the period of 24 hours, while taking into account the temporal couplings between solutions via the ramping constraints. The trajectories of the solutions for the optimal power flow problem with different initial points appear in Figure~\ref{fig_CA_periodic_delta005}. 
	In this figure, the solid blue line represents the cost obtained by the semidefinite programming (SDP) relaxation of the optimal power flow~\cite{lavaei1_2}. This curve is a lower bound to the globally optimal cost and serves as a certificate of the global optimality whenever it touches other trajectories.

The gray circles in these plots are some of the local solutions that were obtained via a Monte Carlo simulation.
Based on  Figure~\ref{fig_CA_periodic_delta005}, indeed there exist multiple local solutions at almost all time-step (some of them emerge over time). However, surprisingly, the trajectories of the local solutions that are initialized at different points all converge towards the global solution.  This implies that there is no spurious local trajectory, and therefore local search methods are able to find global minima of the optimal power flow problem at future times even when they start from poor local minima at the initial time. 

\subsection{Dynamic Matrix Recovery}
\label{sec:dyn_mat}

In the dynamic matrix recovery problem, the goal is to recover a time-varying low-rank matrix, based on a limited number of  linear observations~\cite{xu2016dynamic, xu2017simultaneous}. 
This problem can be formulated as follows:
\begin{align}\label{eq:mat}
\inf_{X\in\mathbb{R}^{n\times r}}\sum_{i=1}^{m}\left(\langle A_i, XX^\top\rangle-d_{i}(t)\right)^2
\end{align}
where $\langle \cdot, \cdot\rangle$ is the inner product operator, $\{A_i\}_{i=1}^m$ are the sensing matrices, and $d(t)$ is the time-varying measurements vector. Equivalently,~\eqref{eq:mat} can be re-written as
\begin{align}\label{mat_sens}
\inf_{X\in\mathbb{R}^{n\times r}, \epsilon\in\mathbb{R}^m}& \sum_{i=1}^{m} \epsilon_i^2\nonumber\\
\mathrm{s.t.}~~~~~& \langle A_i, XX^\top\rangle-\epsilon_i = d_{i}(t) ~,~~ i = 1,\hdots,m
\end{align}
Assuming that $d(t)$ does not change over time, it is well-known that the above optimization problem has no spurious local minima if the sensing matrices $\{A_i\}_{i=1}^m$ satisfy a certain \textit{restricted isometry property} (RIP). In particular, it is said that the sensing matrices $\{A_i\}_{i=1}^m$ satisfy RIP with a constant $\delta\in [0,1)$ if the inequality $(1-\delta)\|X\|_F^2\leq \frac{1}{m}\sum_{i=1}^{m}\langle A_i, X\rangle\leq (1+\delta)\|X\|_F^2$ is satisfied for every $X\in\mathbb{R}^{n\times n}$ whose rank is upper bounded by $2r$ ($\|X\|_F$ is the Frobenious norm of the matrix $X$). Recently,~\cite{zhang2019sharp} showed that if $r = 1$, an RIP constant of $\delta<1/2$ is both necessary and sufficient for the benign landscape of the time-invariant matrix recovery problem.

Consider the  sensing matrices
\begin{align}\small
	A_1 \!&=\! \begin{bmatrix}
	1 \!&\! 0\\
	0 \!&\! \frac{1}{2}
	\end{bmatrix}\!,\quad \quad \ \ A_2 \!=\! \begin{bmatrix}
	0 \!\!&\!\! \frac{\sqrt{3}}{2}\\
	\frac{\sqrt{3}}{2} \!\!&\!\! 0
	\end{bmatrix}\\
	A_3 \!&=\! \begin{bmatrix}
	1 \!\!\!&\!\!\! -\frac{1}{\sqrt{2}}\\
	\frac{1}{\sqrt{2}} \!\!\!&\!\!\! 0
	\end{bmatrix}\!,\quad  A_4 \!=\! \begin{bmatrix}
	0 \!\!&\!\! 0\\
	0 \!\!&\!\! \frac{\sqrt{3}}{2}
	\end{bmatrix}\nonumber
\end{align}
with the time-invariant measurement vector $d = \begin{bmatrix}
1 & 0 & 0 & 0
\end{bmatrix}^\top$ and $r=1$.
The paper~\cite{zhang2019sharp} proved that the RIP constant for the above sensing matrices is equal to $1/2$. This implies that the matrix recovery problem with the aforementioned sensing matrices is prone to having spurious local minima.
In fact,~\cite{zhang2019sharp} showed that the above problem has one global solution at $Z = \begin{bmatrix}
1 & 0
\end{bmatrix}^\top$ and one spurious local solution at $X = \begin{bmatrix}
0, {1}/{\sqrt{2}}
\end{bmatrix}^\top$. 
Now, consider the time-varying version of the above instance, where the measurement vector changes over time, as in:
$$d(t) = \begin{bmatrix}
(0.8+0.2\cos t)^2+\frac{1}{2}(0.2\sin t)^2\\
\sqrt{3}(0.2\sin t)(0.8+0.2\cos t)\\
0\\
\frac{\sqrt{3}}{2}(0.2\sin t)^2
\end{bmatrix}$$
It is easy to see that $Z = \begin{bmatrix}
0.8+0.2\cos t & 0.2\sin t
\end{bmatrix}^\top$ is the trajectory of the globally optimal solution to the defined dynamic matrix recovery problem. Moreover, using a Gradient descent algorithm initialized at the spurious local solution at time $t=0$, we solve~\eqref{mat_sens} sequentially over time with an appropriate regularization (to be defined later). Figures~\ref{fig_trajectories} and~\ref{fig_cost} show that, despite the fact that the problem has a spurious local minimum at $t=0$ and future times, its local trajectory gradually converges to the global one.

\begin{figure*}
      \centering
      \subcaptionbox{The trajectories of local and global solutions over time\label{fig_trajectories}}
        {  \includegraphics[width=0.36\linewidth]{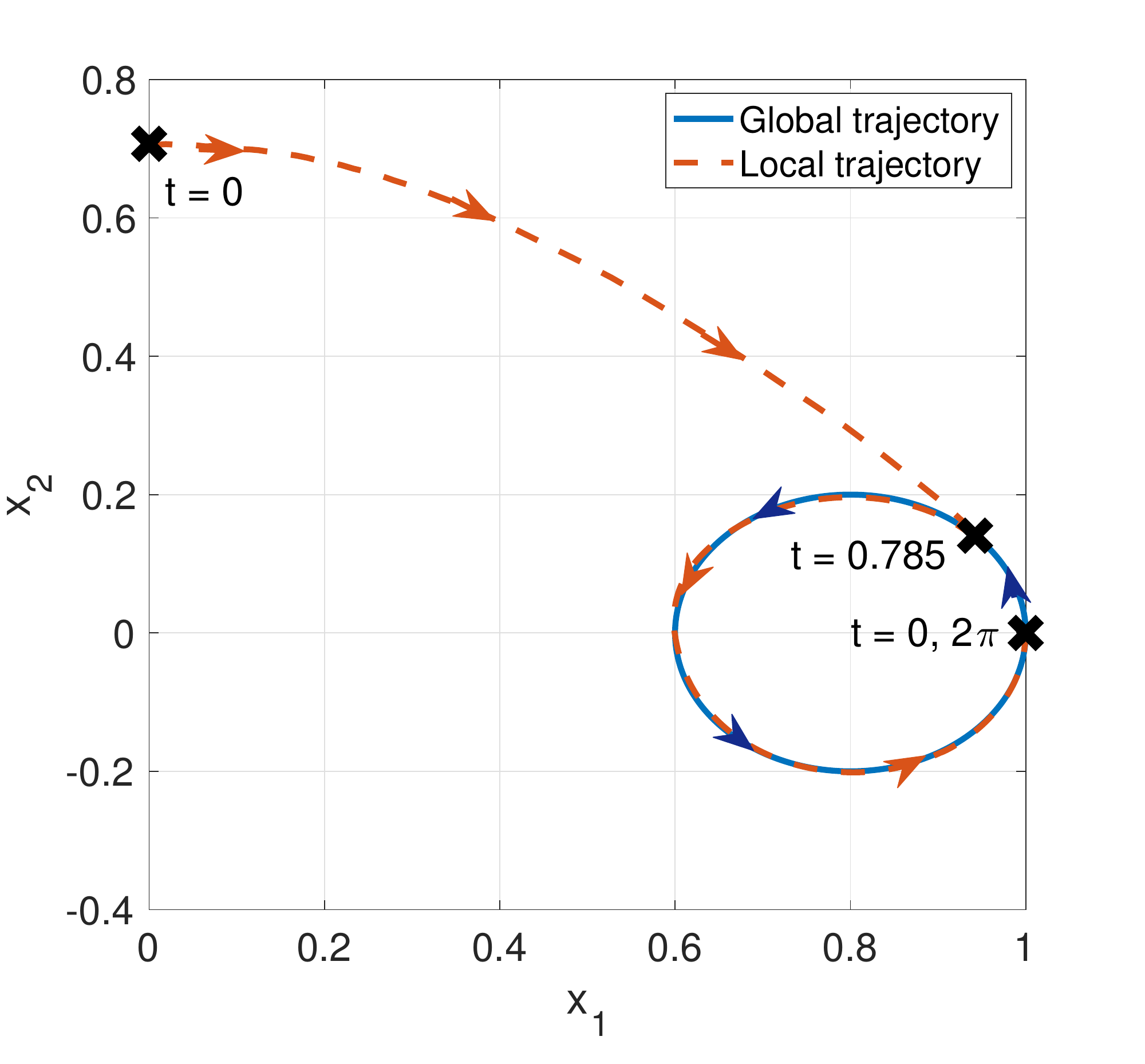}}\hspace{1mm}
      \subcaptionbox{The objective value of the local trajectory over time\label{fig_cost}}
        { \includegraphics[width=0.38\linewidth]{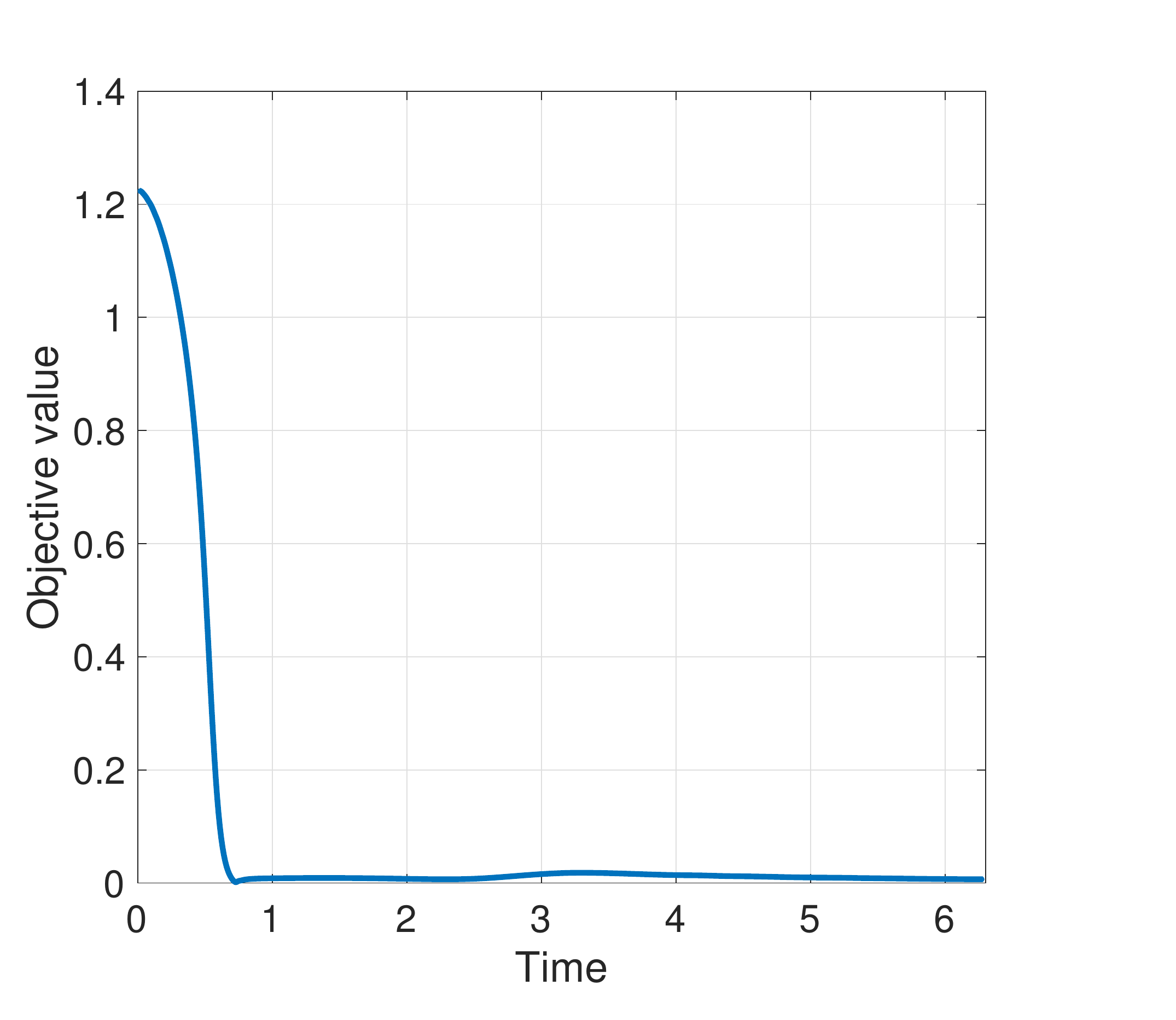}}
      \caption{Case study in matrix recovery.}
    \end{figure*}


\section{Notion of Spurious Local Trajectory}
\label{sec:Notion of spurious local trajectory}
Inspired by the above case studies, we consider the effect of the variation in the input data on the landscape of the optimization problem. 
We focus on the following time-varying nonconvex optimization:
\begin{equation}
\inf_{{x(t)}\in \mathbb{R}^n} ~ f({x(t)},t) ~~~\text{s.t.}~~~ h_i({x(t)}) = d_i(t),~ i = 1,\hdots,m 
\label{eq:problem}
\end{equation}
where the objective function $f({x(t)},t)$ and the right-hand side of the equality constraints vary over time $t\in [0,T]$. {We assume that $f: \mathbb{R}^n \times [0,T] \longrightarrow \mathbb{R}$ is a continuously differentiable function}. Moreover, $h_i: \mathbb{R}^n \longrightarrow \mathbb{R}$ and $d_i:  [0,T] \longrightarrow \mathbb{R}$ for $i=1,\hdots,m$ are  twice continuously differentiable functions, and that $T>0$ is a finite time horizon. 
Moreover, we assume that $f$ is uniformly bounded from below (i.e., $f({x(t)},t)\geq M$ for some constant $M$) and that the problem is feasible for all $t\in[0,T]$. {The objective function $f(x,t)$ may be nonconvex in $x$ and the constraint function $h(x)=(h_1(x),\ldots,h_m(x))$ may be nonlinear in $x$.}
Note that the dynamic matrix recovery problem~\eqref{mat_sens} is a special case of~\eqref{eq:problem}.
\begin{remark}
	 Inequality constraints can also be included in~\eqref{eq:problem} through a reformulation technique. In particular, suppose that~\eqref{eq:problem} includes a set of inequality constraints $g_j(x)\leq v_j(t)$ for $j = 1,\dots, l$. Then, one can reformulate them as equality constraints through the following procedure:
	\begin{itemize}
		\item[1.]  Rewrite the inequality constraints by introducing a slack variable $s\in\mathbb{R}^l$, as in
		{$$g_j(x(t))+ s_j(t) = v_j(t),~~~~ j = 1,\dots, l$$}
		\item[2.] Augment the objective function with a penalty {$p(s(t)) = \sum_{j=1}^{l}p_j(s_j(t))$}.
	\end{itemize}
Here, {$p_j(s_j(t))$} are nonsmooth loss functions for an exact reformulation. Furthermore, they can be relaxed to continuously differentiable loss functions at the expense of incurring some (controllable) approximation errors; see~\cite{zangwill1967non, bertsekas1997nonlinear}. This implies that the previously-introduced optimal power flow problem can be reformulated as~\eqref{eq:problem}.
\end{remark}

In practice, one can only hope to sequentially solve this problem at discrete times $0 = t_0 < t_1 < t_2 < \hdots < t_N =  T$. However, notice that~\eqref{eq:problem} is un-regularized. In particular, depending on the properties of the objective function, an arbitrary solution to~\eqref{eq:problem} at time $t_k$ can be arbitrarily far from that of~\eqref{eq:problem} at time $t_{k-1}$. However---as elucidated in our case study on the optimal power flow problem--- it is neither practical nor realistic to have solutions that change abruptly over time in many real-world problems. One way to circumvent this issue is to regularize the problem at time $t_{k+1}$ by penalizing the deviation of its solution from the one obtained at time $t_{k}$. Precisely, we employ a quadratic proximal regularization as is done in online learning~\cite{do2009}. 

\begin{definition}
	\label{def:discrete}
	Given evenly spaced-out time steps $0 = t_0 < t_1 < t_2 < \hdots < t_N = T$ for some integer $N$, a sequence $x_0,x_1,x_2, \hdots, x_N$ is said to be a \textbf{discrete local trajectory} of the time-varying optimization \eqref{eq:problem} if the following holds: 
	\begin{enumerate}
		\item $x_0$ is a local solution to the time-varying optimization \eqref{eq:problem} at time $t_0=0$;
		\item for $k=0,1,2, \hdots,N-1$, $x_{k+1}$ is local solution to the regularized problem
		\begin{align}
		\begin{array}{ll}
		\inf_{x\in \mathbb{R}^n} & f(x,t_{k+1}) ~+~  \alpha ~\frac{\|x - x_k\|^2}{2(t_{k+1}-t_k)} \label{obj_reg}\\[.3cm]
		\text{s.t.} & h_i(x) = d_i(t_{k+1}) ~,~~~ i = 1,\hdots,m.
		\end{array}
		\end{align}
	\end{enumerate}
	Above, $\alpha>0$ is a fixed regularization parameter and $\| \cdot \|$ denotes the Euclidian norm.
\end{definition}
Note that in the above definition, the term \textit{local solution} refers to any feasible point that satisfies the Karush-Kuhn-Tucker (KKT)  conditions for~\eqref{obj_reg}. A natural approach to characterizing the global landscape of~\eqref{eq:problem} is to analyze discrete local trajectories of the regularized problem~\eqref{obj_reg}. However, notice that the non-convexity of~\eqref{obj_reg} may lead to \textit{bifurcations} in discrete local trajectories. In particular, given a local solution $x_k$, the regularized problem~\eqref{obj_reg} may possess two local solutions $x^{(1)}_{k+1}$ and $x^{(2)}_{k+1}$, each resulting in a different discrete local trajectory.\footnote{For example, there exist two discrete trajectories starting at $x_0=0$ and at time $t_0=0$ for the time-varying objective function $f(x,t) := x^2(T/2-t)$. Indeed, the discrete trajectory stays at $x_k=0$ for $t_k\leq T/2$ and then, due to the regularization, it bifurcates into two separate discrete trajectories.} {The non-uniqueness of the discrete local trajectories due to the bifurcation will make the analysis inconclusive. This is because the next solution of the problem given the current solution is not well-defined and due to the number of possibilities at each step, the solution trajectory is not unique and can take an exponential number of possibilities depending on the settings of the numerical algorithm (the choice of descent directions and step sizes). However,} in what follows, we show that such {bifurcations} disappear in the {ideal} scenario, where the regularized problem can be {sampled} arbitrarily fast, or equivalently, as we increase $N$ to infinity. In particular, given a fixed initial local solution $x_0$, we show that any discrete local trajectory starting from $x_0$ converges uniformly to the unique solution to a well-defined ODE that is initialized at $x_0$. By building upon this result, we introduce the notion of spurious local trajectory as a generalization to the notion of spurious local minima.

Given an initial local solution $x_0$, consider the following initial value problem:
\begin{subequations}
	\begin{align}
	\dot{x} &= -\frac{1}{\alpha} \eta(x,t) + \theta(x)\dot{d}\\
	x(0) &= x_0
	\end{align}  
	\label{eq:ode}
\end{subequations}
where
\begin{subequations}
\begin{align}
\eta(x,t) := & ~~ \left[I-\mathcal{J}(x)^\top(\mathcal{J}(x)\mathcal{J}(x)^\top)^{-1}\mathcal{J}(x)\right]\nonumber\\
&~~~~~ \times \nabla_x f(x,t), \\
\theta(x) ~ := & ~~ \mathcal{J}(x)^\top(\mathcal{J}(x)\mathcal{J}(x)^\top)^{-1}.
\end{align}
\end{subequations}
Above, $\mathcal{J}(x)$ denotes the Jacobian of the left-hand side of the constraints $h(x) = [h_1(x),\hdots,h_m(x)]^\top$ and $d(t)$ denotes the right-hand side of the constraints, that is to say $d(t) = [d_1(t),\hdots,d_m(t)]^\top$. The term $\theta(x)\dot{d}$ captures the effect of data variation in the dynamics, and the function $\eta(x,t)$ can be interpreted as the orthogonal projection of the gradient $\nabla_x f(x,t)$ on the Kernel of $\mathcal{J}(x)^\top$.

Later, we will show that the solution to~\eqref{eq:ode} exists, it is unique, and can be used to fully characterize the limiting behavior of every discrete local trajectory of the time-varying problem~\eqref{eq:problem}.
\begin{assumption}[Uniform Boundedness]\label{assum11}
	There exist constants $R_1>0$ and $R_2>0$ such that, for any discrete local trajectory $x_0,x_1,x_2,\hdots$, the parameter $\|x_k\|$ and the objective function of~\eqref{obj_reg} at $x_k$ are upper bounded by $R_1$ and $R_2$, respectively, for every $k\in\{0,1,2,\hdots, N\}$.
\end{assumption}
{Assumption~\ref{assum11} is mild, and can be guaranteed by requiring the feasible region to be compact. This can be ensured by adding constraints on the magnitude of the variables. For instance, in the time-varying OPF, the variables of the problem, i.e., active and reactive power, voltage magnitudes, and their angles, are restricted to bounded sets implied by the laws of physics and technological constraints on physical devices. It is worth noting that the main results of the paper do not depend on the explicit values of the constants $R_1$ and $R_2$.}
\begin{assumption}[Non-singularity]\label{assum22}
	There exists a constant $c>0$ such that, for any discrete local trajectory $x_0,x_1,x_2,\hdots$, it holds that $\sigma_{\min}(\mathcal{J}(x_k))\geqslant c$ for all $k\in\{0,1,2,\hdots\}$, where $\sigma_{\min}$ denotes the minimal singular value.
\end{assumption}

{Assumption \ref{assum22} implies that linear independence constraint qualification (LICQ) holds at every point of a discrete local trajectory, which in turn implies that the constraints are non-degenerate. The LICQ is a simple sufficient condition to guarantee the well-definedness of the KKT points~\cite{nocedal2006numerical}, and is the most standard assumption in the optimization literature~\cite{boyd2004convex, bertsekas1997nonlinear, luenberger1984linear}. Indeed, most of the off-the-shelf solvers, such as IPOPT~\cite{waechter2009introduction}, only converge to solutions that automatically satisfy LICQ.}
\begin{theorem}[Existence and Uniqueness]\label{thm:exist_unique} {Let Assumption \ref{assum11} and Assumption \ref{assum22} hold. Suppose that $x_0$ is an arbitrary local solution to the time-varying optimization~\eqref{eq:problem} at $t = 0$. Then, the ODE~\eqref{eq:ode} with the initial value condition $x(0) = x_0$ has a unique continuously differentiable solution $x:[0,T]\to\mathbb{R}^n$.}
\end{theorem}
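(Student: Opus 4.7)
The plan is to apply the classical Picard--Lindel\"of existence and uniqueness theorem to~\eqref{eq:ode}, and then extend the resulting local solution to all of $[0,T]$ using the a priori control supplied by Assumptions~\ref{assum11} and~\ref{assum22}. Write the right-hand side as
$$F(x,t) \;:=\; -\tfrac{1}{\alpha}\,\eta(x,t) + \theta(x)\,\dot d(t).$$
Since $f\in C^1$ and $h\in C^2$, the gradient $\nabla_x f(x,t)$ is continuous in $(x,t)$ and the Jacobian $\mathcal{J}(x)$ is $C^1$ in $x$; since $d\in C^2$, the derivative $\dot d$ is continuous on $[0,T]$. On any set where $\sigma_{\min}(\mathcal{J}(x))\geq c/2$, the Gram matrix $\mathcal{J}(x)\mathcal{J}(x)^\top$ is uniformly invertible with smooth inverse, so both $\eta$ and $\theta$ are continuous in $t$ and locally Lipschitz in $x$; hence so is $F$.

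For local existence, Assumption~\ref{assum22} applied at the initial point gives $\sigma_{\min}(\mathcal{J}(x_0))\geq c$, and by continuity there exists a closed ball $\bar B(x_0,\rho)$ on which $\sigma_{\min}(\mathcal{J}(x))\geq c/2$. Applying Picard--Lindel\"of on $\bar B(x_0,\rho)\times[0,T]$ yields a unique $C^1$ solution $x(\cdot)$ on some maximal forward interval $[0,T^*)\subseteq[0,T]$ with $x(0)=x_0$. To show $T^*=T$, invoke the standard continuation alternative: if $T^*<T$ then either $\|x(t)\|\to\infty$ or $\sigma_{\min}(\mathcal{J}(x(t)))\to 0$ as $t\uparrow T^*$. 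Both are ruled out by the a priori bounds: Assumption~\ref{assum11} keeps the trajectory uniformly bounded by $R_1$, while Assumption~\ref{assum22} keeps $\sigma_{\min}(\mathcal{J}(x(t)))\geq c$. This contradicts each alternative and forces $T^*=T$, after which uniqueness on $[0,T]$ follows by the usual gluing argument along the maximal interval.

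The principal obstacle is that Assumptions~\ref{assum11} and~\ref{assum22} are formulated along discrete local trajectories rather than along the continuous ODE solution, so one must justify that the same uniform constants $R_1$ and $c$ govern the limiting trajectory $x(\cdot)$. The cleanest resolution is to interpret these assumptions as structural estimates that hold throughout the tubular neighborhood visited by all sufficiently fine discrete trajectories, and hence are inherited by their uniform limit; equivalently, one establishes this theorem jointly with the discrete-to-continuous approximation result, using a compactness argument to transfer the bounds from the discrete iterates $x_k$ to the continuous solution~$x$.
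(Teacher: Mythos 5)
There is a genuine gap, and it sits exactly where you flag it but do not resolve it. Your continuation step needs two a priori facts about the \emph{continuous} solution $x(\cdot)$ on its maximal interval: that $\|x(t)\|$ stays bounded and that $\sigma_{\min}(\mathcal{J}(x(t)))$ stays bounded away from zero. Assumptions~\ref{assum11} and~\ref{assum22}, as stated, control only the discrete iterates $x_k$ of discrete local trajectories (solutions of the regularized problems~\eqref{obj_reg}); they say nothing about the ODE flow itself, and the vector field in~\eqref{eq:ode} is not even defined where $\mathcal{J}(x)\mathcal{J}(x)^\top$ is singular. So the sentence ``Both are ruled out by the a priori bounds'' is a misapplication of the hypotheses, and the proposed repair --- reinterpret the assumptions as holding on a tubular neighborhood, or ``establish this theorem jointly with the discrete-to-continuous approximation result'' --- either strengthens the assumptions beyond what the theorem grants, or is circular: the uniform convergence of \emph{all} discrete trajectories to $x(\cdot)$ (Theorem~\ref{thm:conv}) is proved in the paper \emph{after}, and using, existence and uniqueness, via backward Euler. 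The paper explicitly points out that Picard--Lindel\"of/Cauchy--Peano only give existence on a possibly tiny interval $[0,\tau]$ and that the global Lipschitz condition fails here, which is precisely why it does not take your route.

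What the paper does instead, and what your sketch would have to reproduce to close the gap, is to build the candidate solution so that the bounds are inherited by construction rather than assumed along the flow: (i) use the KKT stationarity and feasibility conditions of~\eqref{obj_reg}, together with the mean value theorem, to show the discrete iterates satisfy a difference equation $\frac{x_k-x_{k-1}}{\Delta t}=g(\cdot)$ whose right-hand side is the discretization of~\eqref{eq:ode}, and deduce $\|x_k-x_{k-1}\|\le c\,\Delta t$ (equicontinuity); (ii) apply Arzel\`a--Ascoli twice (to the interpolants and their derivatives) to extract a subsequence of discrete trajectories converging, together with derivatives, to a $C^1$ limit $\bar x$, which automatically satisfies $\|\bar x(t)\|\le R_1$ and $\sigma_{\min}(\mathcal{J}(\bar x(t)))\ge c'>0$; (iii) pass to the limit in the difference equation to conclude $\bar x$ solves~\eqref{eq:ode}; and (iv) obtain uniqueness not globally but from local Lipschitz continuity of the right-hand side on an open connected set containing the graph of $\bar x$, via the uniqueness theorem of Coddington--Levinson. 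Your local Picard--Lindel\"of step and the Lipschitz observation on $\{\sigma_{\min}(\mathcal{J}(x))\ge c/2\}$ are fine as far as they go, but the substance of the proof is precisely the discrete-trajectory compactness argument you defer, and without it the extension to all of $[0,T]$ does not follow from the stated assumptions.
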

{Theorem~\ref{thm:exist_unique} states that the proposed ODE is well-defined and has a unique solution, provided that its initial value is a local solution, i.e., it satisfies the KKT conditions for the original time-varying optimization problem. As will be shown later, this assumption is crucial and cannot be relaxed in general. Given the unique solution to the proposed ODE, the next theorem precisely characterizes its relationship to \textit{any} discrete local trajectory of~\eqref{obj_reg} starting at $x_0$.}
\begin{theorem}[Uniform Convergence]\label{thm:conv} Let Assumption \ref{assum11} and Assumption \ref{assum22} hold. If $x_0$ is a local solution to the time-varying optimization~\eqref{eq:problem} at $t = 0$, then any discrete local trajectory initialized at $x_0$ converges towards the solution $x:[0,T]\to\mathbb{R}^n$ with $x(0) = x_0$, in the sense that
	\begin{align}\label{converge}
	\lim\limits_{N\to+\infty}\sup_{0\leq k\leq N}\|x_k-x(t_k)\| = 0,
	\end{align}
	where $N$ is the number of points in the discrete local trajectories that are evenly spaced-out in time.
\end{theorem}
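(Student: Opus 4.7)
The plan is to identify the discrete local trajectory $\{x_k\}$ as a perturbed implicit Euler discretization of the ODE \eqref{eq:ode} and then invoke a Gr\"onwall-type stability argument relative to the unique continuously differentiable ODE solution guaranteed by Theorem~\ref{thm:exist_unique}. I denote the uniform time step by $h_N=T/N$, write $P(x):=\mathcal{J}(x)^\top(\mathcal{J}(x)\mathcal{J}(x)^\top)^{-1}\mathcal{J}(x)$, and use $F(x,t):=-\tfrac{1}{\alpha}\eta(x,t)+\theta(x)\dot d(t)$ for the right-hand side of \eqref{eq:ode}.

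First I would extract the recursion from the KKT conditions of the regularized subproblem \eqref{obj_reg}. By Assumption \ref{assum22}, LICQ holds at $x_{k+1}$, so there exists $\lambda_{k+1}\in\mathbb{R}^m$ with
\[
\nabla_x f(x_{k+1},t_{k+1})+\alpha\,\frac{x_{k+1}-x_k}{h_N}+\mathcal{J}(x_{k+1})^\top\lambda_{k+1}=0,\qquad h(x_{k+1})=d(t_{k+1}).
\]
Left-multiplying the stationarity equation by $\mathcal{J}(x_{k+1})$ and inverting $\mathcal{J}(x_{k+1})\mathcal{J}(x_{k+1})^\top$ (well-defined by Assumption~\ref{assum22}) solves for $\lambda_{k+1}$. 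Substituting back and using that $I-P(x_{k+1})$ projects onto $\ker\mathcal{J}(x_{k+1})$, the stationarity equation collapses to
\[
\alpha\,(I-P(x_{k+1}))\frac{x_{k+1}-x_k}{h_N}=-\eta(x_{k+1},t_{k+1}),\qquad
\mathcal{J}(x_{k+1})\frac{x_{k+1}-x_k}{h_N}=\frac{d(t_{k+1})-d(t_k)}{h_N}+r_k,
\]
where $r_k$ is the second-order Taylor remainder of $h$ at $x_{k+1}$ along $x_{k+1}-x_k$. Summing the two orthogonal components and using $P(x)\mathcal{J}(x)^\top=\mathcal{J}(x)^\top$, I obtain
\[
\frac{x_{k+1}-x_k}{h_N}=F(x_{k+1},t_{k+1})+\varepsilon_k,
\]
an implicit Euler step for \eqref{eq:ode} perturbed by $\varepsilon_k$, which aggregates the Taylor error in $h$ and the error $(d(t_{k+1})-d(t_k))/h_N-\dot d(t_{k+1})$.

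Next I would bound $\varepsilon_k$ uniformly in $k$. Smoothness of $d$ yields $\|(d(t_{k+1})-d(t_k))/h_N-\dot d(t_{k+1})\|=O(h_N)$. For the Taylor remainder I first show $\|x_{k+1}-x_k\|=O(h_N)$: Assumption~\ref{assum11} bounds the objective value of the regularized problem at $x_{k+1}$ by $R_2$, while feasibility ensures a competitor value $f(x_k',t_{k+1})$ for any $x_k'$ satisfying the new constraints close to $x_k$; combining this with the lower bound $M$ on $f$ forces $\alpha\|x_{k+1}-x_k\|^2/(2h_N)\le C$, i.e.\ $\|x_{k+1}-x_k\|\le C'\sqrt{h_N}$. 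Feeding this crude bound back into the recursion yields the sharper $\|x_{k+1}-x_k\|=O(h_N)$ (since the dominant term $h_N F(x_{k+1},t_{k+1})$ is $O(h_N)$ by Assumptions \ref{assum11}--\ref{assum22} and smoothness of $f$). Hence $r_k=O(h_N)$ and $\varepsilon_k=O(h_N)$ uniformly.

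Finally I would run the stability argument. On the compact tube provided by Assumption~\ref{assum11}, with $\sigma_{\min}(\mathcal J(\cdot))\ge c$ by Assumption~\ref{assum22}, the map $F(x,t)$ is Lipschitz in $x$ uniformly in $t$ (smoothness of $f,h,d$ together with smoothness of matrix inversion on a set bounded away from singular matrices). Let $x(t)$ denote the unique solution of \eqref{eq:ode} with $x(0)=x_0$, provided by Theorem~\ref{thm:exist_unique}. Expanding $x(t_{k+1})=x(t_k)+h_N F(x(t_{k+1}),t_{k+1})+O(h_N^2)$ (implicit-Euler consistency) and subtracting from the recursion for $x_{k+1}$, I get, with $e_k:=x_k-x(t_k)$,
\[
e_{k+1}=e_k+h_N\bigl(F(x_{k+1},t_{k+1})-F(x(t_{k+1}),t_{k+1})\bigr)+h_N\,\varepsilon_k+O(h_N^2),
\]
and by the Lipschitz bound $\|e_{k+1}\|\le\|e_k\|+Lh_N\|e_{k+1}\|+O(h_N^2)$. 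For $N$ large enough so that $Lh_N<1/2$, a discrete Gr\"onwall inequality yields $\sup_{0\le k\le N}\|e_k\|\le C\,h_N\,e^{2LT}\to0$, which is \eqref{converge}.

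The main obstacle I expect is the \emph{non-uniqueness} of the KKT point $x_{k+1}$ at each step: since the subproblem \eqref{obj_reg} is nonconvex, any branch of the tree of local trajectories must be shown to satisfy the same implicit Euler recursion with a uniformly small residual. The step size $h_N$ combined with the quadratic regularizer is what collapses all branches to the same limit, but making the $\|x_{k+1}-x_k\|=O(h_N)$ estimate rigorous for every branch (rather than just for the globally optimal branch) is the delicate point; the bootstrap from $\sqrt{h_N}$ to $h_N$ using the explicit formula for the recursion is what resolves it.
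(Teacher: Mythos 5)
Your proposal is correct and follows essentially the same route as the paper: both derive a perturbed backward/implicit Euler recursion from the KKT conditions under LICQ, establish the $O(\sqrt{\Delta t})\to O(\Delta t)$ bootstrap on the increments $\|x_{k+1}-x_k\|$ from the regularized objective bound, and close with a local-Lipschitz plus discrete Gr\"onwall argument (the paper's induction keeping the iterates inside the balls where $g$ is Lipschitz). The only cosmetic difference is that the paper compares the discrete trajectory to the exact backward Euler iterates $y_k^{\Delta t}$ and invokes classical convergence of that scheme, whereas you compare directly to $x(t_k)$ via implicit-Euler consistency, and the paper uses Mean Value Theorem intermediate points $\tilde x_{i,k}$ in place of your Taylor remainder $r_k$.
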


\textit{Sketch of the proofs.} The proofs for Theorems~\ref{thm:exist_unique} and~\ref{thm:conv} are quite involved and hence, they are deferred to the next section. In what follows, we provide the high-level ideas of our developed proof techniques. 
Note that most of the classical results on ordinary differential equations, namely the Picard-Lindel\"of Theorem \cite[Theorem 3.1]{coddington1955theory}, the Cauchy-Peano Theorem \cite[Theorem 1.2]{coddington1955theory}, and the Carath\'eodory Theorem \cite[Theorem 1.1]{coddington1955theory}, can only guarantee the existence of a solution in a {local region}, i.e., a neighborhood $[0,\tau]$ where $\tau < T$ is potentially very small. On the other hand, the global version of Picard-Lindel\"of Theorem only holds under a restrictive Lipschitz condition, which is not satisfied for~\eqref{eq:ode}. Instead, we take a different approach to prove existence and uniqueness of the solution to~\eqref{eq:ode} (Theorem~\ref{thm:exist_unique}). The proof consists of three general steps:
\begin{enumerate}
	\item By building upon the Arzel\`a-Ascoli Theorem, we show that, among all the discrete local trajectories that are initialized at $x_0$, there exists at least one that is uniformly convergent to a continuously differentiable function $y:[0,T]\to\mathbb{R}^n$.
	\item By fully characterizing the KKT points of~\eqref{obj_reg}, we prove that $y$ is a solution to~\eqref{eq:ode} when $N\to +\infty$. 
	\item The uniqueness of the solution is then proved by showing the existence of an open and connected set $\mathcal{D}$ such that the proposed ODE is locally Lipschitz continuous on $\mathcal{D}$ and $(y(t),t)\in\mathcal{D}$ for every $t\in[0,T]$. This, together with~\cite[Theorem 2.2]{coddington1955theory}, completes the proof of Theorem~\ref{thm:exist_unique}.
\end{enumerate}

Given the existence and uniqueness of the solution to~\eqref{eq:ode}, we show the correctness of Theorem~\ref{thm:conv} by making an extensive use of the so-called backward Euler method~\cite{butcher2016numerical}. In particular, we show that {all} of the discrete local trajectories converge to a discretized version of the solution to~\eqref{eq:ode} that is obtained by the backward Euler method. This, together with the existing convergence results on the backward Euler iterations, completes the proof of Theorem~\ref{thm:conv}.$\hfill\square$

Now that we have established the connection between the discrete local trajectories and their continuous limit, we naturally propose the following definition.
 \begin{definition}
	\label{def:continous}
	A continuously differentiable function ${x(t)}: [0,T] \longrightarrow \mathbb{R}^n$ is said to be a \textbf{continuous local trajectory} of the time-varying optimization \eqref{eq:problem} if the following holds:
	\begin{enumerate}
		\item $x(0)$ is a local solution to the time-varying optimization \eqref{eq:problem} at time $t=0$;
		\item ${x(t)}$ is a solution to~\eqref{eq:ode}.
	\end{enumerate}
\end{definition}
{
The next definition will be at the core of our subsequent definition of spurious local trajectories. 
\begin{definition}
The \textbf{region of attraction} of a local minimum $x^\ast(t)$ of $f(\cdot,t)$ {in the feasible set $\mathcal{F}(t)=\{x \in \mathbb{R}^n: h(x)=d(t)\}$} at a given time $t$ is defined as:
\begin{align*}
&\Big\{x_0\in \mathcal{F}(t) \ \big|\ \lim_{s \rightarrow \infty}\tilde x(s)=x^\ast(t) \quad \text{where} \quad \frac{d \tilde x(s)}{d s}= -\frac{1}{\alpha}\eta(\tilde x(s), t) +\theta(\tilde x (s))\dot{d}(t)  \quad \text{and} \quad \tilde x(0)=x_0 \Big\}
\end{align*}
\end{definition}}
We next introduce the central notion in this paper. 
{
\begin{definition}\label{def:spurious}
	A continuous local trajectory $x(t)$ is said to be \textbf{spurious} if for all $\bar{T} < T$, there exists a time $t \in [\bar{T},T]$ such that $x(t)$ does not belong to the region of attraction of a global solution of $f(\cdot,t)$. Accordingly, the time-varying optimization problem \eqref{eq:problem}  is said to \textbf{have no spurious local trajectories} if, when initialized at a local solution, any continuous local trajectory $x(t)$ belongs to the region of attraction of a global solution of $f(\cdot,t)$ at all times $t\in[\bar T,T]$ for some constant $\bar{T}<T$.
\end{definition}}

{So far, we have taken the time horizon $T$ to be finite. However, the above definition naturally applies to problems with an infinite time horizon $T= + \infty$. In Theorem \ref{th:sufficient}, we will provide a sufficient condition under which the above non-spurious trajectory property holds for a general objective function with a damping sinusoidal time-varying perturbation.
}

\begin{figure*}
      \centering
      \subcaptionbox{Graph of a time-varying optimization $\inf_{x\in \mathbb{R}} f(x,t)$ showing that the final state of the trajectory belongs to the region of attraction of the global minimum.\label{fig:naive_spurious_1}}
        {  \includegraphics[width=0.42\linewidth]{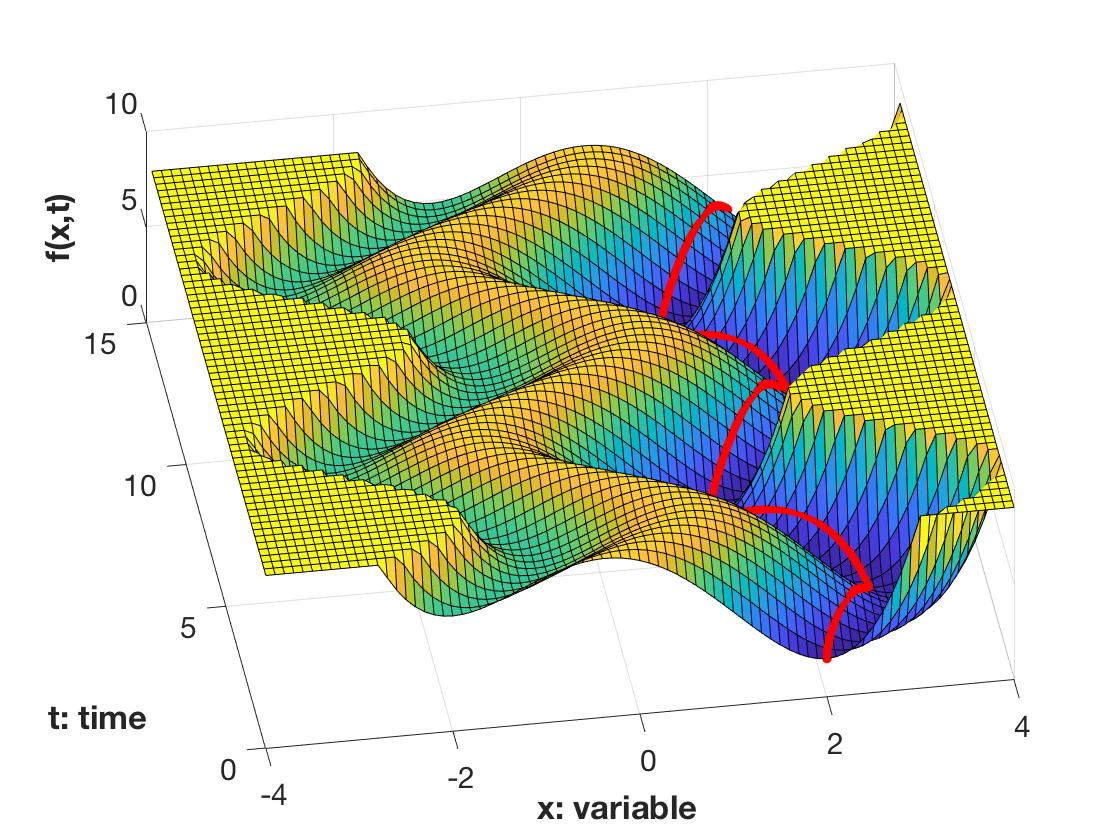}}\hspace{1mm}
      \subcaptionbox{Graph of the same time-varying optimization $\inf_{x\in \mathbb{R}} f(x,t)$ from above showing that the trajectory can never stay in a neighborhood of the global minimum of arbitrarily small size.\label{fig:naive_spurious_2}}
        { \includegraphics[width=0.43\linewidth]{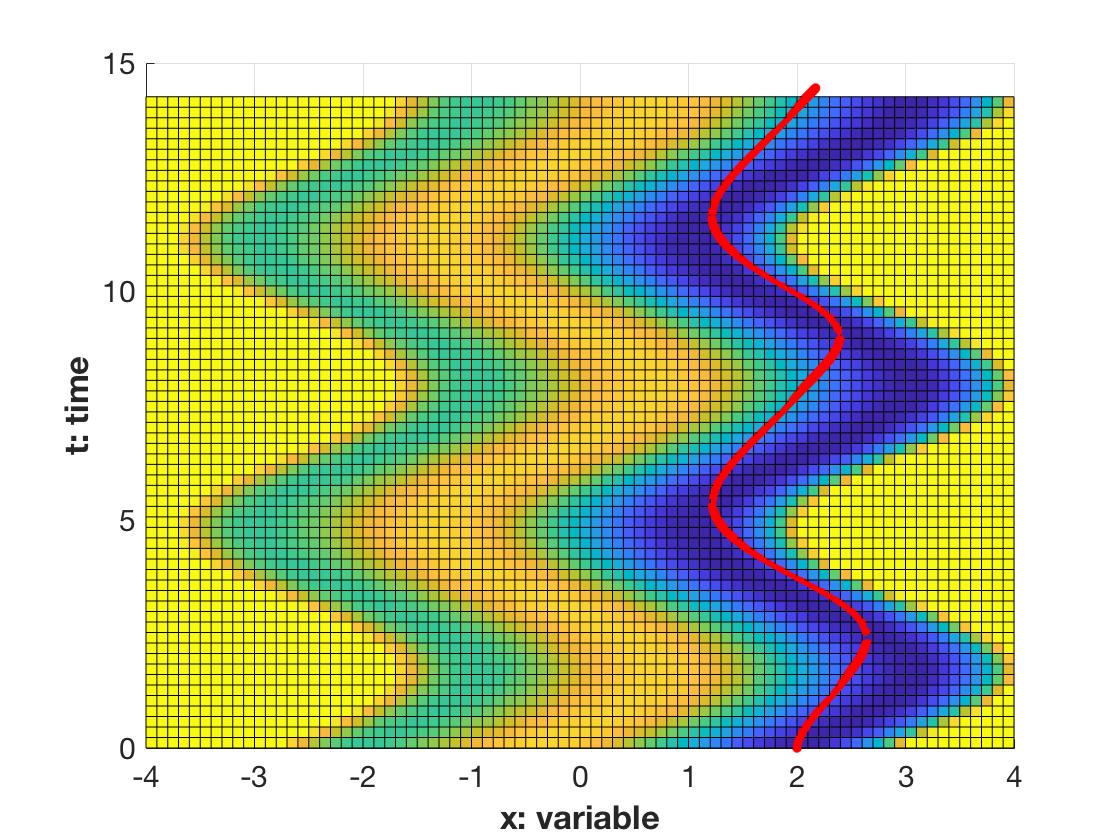}}
      \caption{Example of a time-varying optimization.}
    \end{figure*}
{It may be speculated that  a spurious local trajectory could have been simply defined as a trajectory that does not converge towards a global solution. To understand why the latter definition is not meaningful, notice that both discrete and continuous local trajectories are defined with respect to the regularized problem~\eqref{obj_reg}, as opposed to~\eqref{eq:problem}. The regularization term acts as an \textit{inertia} in the continuous local trajectory, forcing it to ``lag behind'' the global solution when it changes rapidly over time. Therefore, under this alternative definition, all trajectories would be considered spurious. This would be true even for the trajectory initialized at the global minimum. See Figures \ref{fig:naive_spurious_1} and \ref{fig:naive_spurious_2} for an illustration of this phenomenon.}

{
The notion introduced in Definition \ref{def:spurious}, while it deals with continuous local trajectories, naturally has implications for discrete local trajectories. With sufficiently small time steps, the discrete trajectory will eventually converge to the region of attraction of a global solution if the corresponding continuous trajectory is not spurious.}


\section{{Conditions for the Absence of Spurious Local Trajectories}}
\label{sec:Conditions for the Absence of Spurious Local Trajectories}

{In this section, we analyze the role of data variation on the behavior of the solution trajectories. Observe that without data variation, strict spurious local minima cannot not be escaped. This is a consequence of classical results on the local stability of time-invariant ODEs (see for instance \cite[Corollary 10]{tanabe1974algorithm}). In contrast, we show that data variation can enable escaping spurious local solutions over time. In particular, we prove that even a simple periodic variation in the data can induce continuous local trajectories to escape non-global minima and eventually track the global minima.} 

{To better illustrate the main idea, we start with a class of uni-dimensional time-varying problems, and provide sufficient conditions for the absence of spurious local trajectories. Then, we extend our results to a general class of multi-dimensional problems. Consider the function}
\begin{equation}
\inf_{x\in \mathbb{R}} f(x,t) := g(x-\beta \sin(t))
\label{eq:eg}
\end{equation}
where $g: \mathbb{R} \longrightarrow \mathbb{R}$ is continuously twice differentiable and $\beta >0$ models the variation of the data over time. Only the right-hand side varies over time, and therefore, this problem fits well in our introduced framework. We assume that $g(\cdot)$ admits only three stationary points $g'(y_1) = g'(y_2) = g'(y_3)$ with $y_1<y_2<y_3$. We assume also that $y_1$ and $y_3$ are local minima such that $g(y_1)>g(y_3)$, while $y_2$ is a local maximum. Finally, we assume that $g$ is coercive (its limit at $\pm \infty$ is $+\infty$). Thus, its global infimum is reached in $y_3$.

The motivation behind studying this class of functions $f(\cdot)$ is as follows. Since $g(y)$ has a global minimum as well as a spurious solution, when it is minimized by a gradient descent algorithm initialized at the spurious solution, it will become stuck there. This means that using gradient descent for such function is inefficient. However,  one can oscillate the function to arrive at the time-varying function $f(x,t)$ and then study it in the context of online optimization. The following result identifies sufficient conditions for the absence of spurious local trajectories, which implies  that if $\alpha$ and $\beta$ are selected appropriately, gradient descent will always find the global solution.

\begin{proposition}\label{prop:uni}
	\label{prop:sufficient}
	If $\alpha,\beta>0$ are such that
	\begin{enumerate}
		\item $\alpha \beta \geqslant C:= \max_{y_1 \leqslant y \leqslant y_3} g'(y)$,
		\item $\exists m_1,m_2 \in \mathbb{R}: ~~ m_1< y_1 < m_2 ~~~\text{and}~~~ g'(m_1) = g'(m_2) = -  \alpha \beta$,
		\item $ -C/\alpha (t_2-t_1) -\beta (\sin(t_2) - \sin(t_1)) + m_1 \geqslant m_2 $ where $0 < t_1 \leqslant t_2$ satisfy $\cos(t_1) = \cos(t_2) = -C/(\alpha \beta)$,
	\end{enumerate}
	then the time-varying problem \eqref{eq:eg} has no spurious local trajectories.
\end{proposition}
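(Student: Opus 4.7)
The plan is to work in the shifted coordinate $z(t) := x(t) - \beta \sin t$. Since~\eqref{eq:eg} is unconstrained, the ODE~\eqref{eq:ode} reduces to the gradient flow $\dot x = -\tfrac{1}{\alpha}\, g'(x - \beta \sin t)$, which in the new coordinate becomes
\[
\dot z \;=\; -\tfrac{1}{\alpha}\, g'(z) \;-\; \beta \cos t.
\]
Because the time dependence enters only as a sinusoidal shift, the region of attraction of the global minimum $y_3$ under the ODE frozen at any time $t$ takes the constant form $(y_2, +\infty)$ in the $z$-coordinate. It therefore suffices to exhibit a time $\bar T$ such that every continuous local trajectory satisfies $z(t) > y_2$ for all $t \geqslant \bar T$. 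Since the only local minima of $f(\cdot,0) = g$ are $y_1$ and $y_3$, we have $z(0) \in \{y_1, y_3\}$; the case $z(0) = y_3 > m_2$ will follow immediately from the invariance argument below, so the core task is to analyze the initial condition $z(0) = y_1$.

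The first step would be an \emph{invariance} argument powered by condition~2. Evaluating the vector field at the barriers $z = m_1$ and $z = m_2$ yields
\[
\dot z \,\big|_{z = m_i} \;=\; -\tfrac{1}{\alpha}(-\alpha\beta) - \beta \cos t \;=\; \beta(1 - \cos t) \;\geqslant\; 0, \qquad i = 1, 2.
\]
A standard scalar comparison principle then shows that neither barrier can be crossed from above. Together with $z(0) = y_1 \in (m_1, m_2)$, this yields $z(t) \geqslant m_1$ for all $t \geqslant 0$, and once $z$ attains the value $m_2$ it remains $\geqslant m_2 > y_2$ forever after.

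The second step would be a \emph{progress} estimate on the interval $[t_1, t_2]$ singled out by condition~3. From the defining relations for $t_1, t_2$, one has $\cos t \leqslant -C/(\alpha\beta)$, equivalently $-\beta \cos t \geqslant C/\alpha$, throughout $[t_1, t_2]$. Moreover, because $g'$ has exactly the three zeros $y_1, y_2, y_3$, the uniform bound $g'(z) \leqslant C$ holds on the entire interval $[m_1, m_2]$: on $[y_1, y_2]$ this is the definition of $C$, and on $[m_1, y_1] \cup [y_2, m_2]$ the derivative $g'$ has no zero and already equals $-\alpha\beta < 0$ at the endpoints $m_1, m_2$, so by the intermediate value theorem $g' \leqslant 0 \leqslant C$ there. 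Suppose, for contradiction, that $z(t) < m_2$ for every $t \in [t_1, t_2]$; then $z(t) \in [m_1, m_2)$ on this interval by the invariance step, and the pointwise bound $\dot z \geqslant -C/\alpha - \beta \cos t$ is valid and integrates to
\[
z(t_2) \;\geqslant\; z(t_1) \;-\; \tfrac{C}{\alpha}(t_2 - t_1) \;-\; \beta(\sin t_2 - \sin t_1) \;\geqslant\; m_2,
\]
where the last inequality uses $z(t_1) \geqslant m_1$ together with condition~3. This contradicts the assumption, so $z$ must reach $m_2$ at some time $\bar T \leqslant t_2$, and the invariance step then gives $z(t) \geqslant m_2 > y_2$ for every $t \geqslant \bar T$, which is the non-spuriousness conclusion.

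The main technical obstacle I anticipate is certifying the bound $g'(z) \leqslant C$ \emph{uniformly on $[m_1, m_2]$} rather than only on $[y_1, y_3]$; this is where condition~2 plays a role beyond merely pinning the barriers, by forcing $g'$ to be non-positive throughout $[m_1, y_1] \cup [y_2, m_2]$. Once this case analysis is in place, conditions~1--3 interlock to push $z$ strictly past the barrier $m_2$ within a single half-period, after which the periodic forcing is not needed since the invariance step takes over from $\bar T$ onward.
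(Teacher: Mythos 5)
Your proposal is correct and follows essentially the same route as the paper's proof: the change of variables $y = x - \beta\sin t$, the barrier argument at $m_1$ and $m_2$ using $g'(m_i) = -\alpha\beta$ so that $\dot y = \beta(1-\cos t)\geqslant 0$ there, and the integration of $\dot y \geqslant -C/\alpha - \beta\cos t$ over $[t_1,t_2]$ combined with condition~3 to force the trajectory past $m_2$. Your explicit verification that $g'\leqslant C$ holds on all of $[m_1,m_2]$ (not just $[y_1,y_3]$) and your identification of the frozen-time region of attraction as $(y_2,+\infty)$ in the shifted coordinate are details the paper leaves implicit, but the argument is the same.
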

\begin{proof}
    A continuous local trajectory $x : [0,2\pi] \longrightarrow \mathbb{R}$ satisfies
\begin{equation}
x(0)\leqslant y_3,\qquad \dot{x} = -\frac{1}{\alpha}\nabla_x f(x,t),
\end{equation}
which, after the change of variable $y := x - \beta \sin(t)$, reads
\begin{equation}
y(0)\leqslant y_3,\qquad \dot{y} = -\frac{1}{\alpha}g'(y) - \beta \cos (t).
\end{equation}
We first show by contradiction that there exists $t \in [0,2\pi]$ such that $y(t) \geqslant m_2$. Assume that $y(t)<m_2$ for all $t\in [0,2\pi]$. Then, for all $t\in [0,2\pi]$, it holds that 
\begin{equation}
\dot{y} = -\frac{1}{\alpha}g'(y) - \beta \cos(t) \geqslant -\frac{C}{\alpha} - \beta \cos(t).
\end{equation}
Thus, we have
\begin{equation}
y(t_2) \geqslant -\frac{C}{\alpha}(t_2-t_1) - \beta(
\sin(t_2) - \sin(t_1)) + y(t_1).
\label{eq:t1t2}
\end{equation}
We next show by contradiction that $y(t_1) \geqslant m_1$. Assume that $y(t_1)<m_1$. Thus $y(t_1)<m_1<y_1\leqslant y(0)$. Let $t_3$ denote the maximal element of the compact set $[0,t_1] \cap y^{-1}(m_1)$, where $y^{-1}(b) := \{ a \in \mathbb{R} ~|~ y(a) = b\}$. Thus $y(t) \leqslant y(t_3)$ for all $t \in [t_3,t_1]$. As a result, $y'(t_3) \leqslant 0$. Together with $y'(t_3) = - 1/\alpha g'(m_1) - \beta \cos(t_3) = \beta (1-\cos(t_3))$, this implies that $t_3 = 0$ or $t_3 = 2\pi$. This is in contradiction with $0<t_3<t_1 < \pi$.

Now that we have proven that $y(t_1) \geqslant m_1$, equation \eqref{eq:t1t2} implies that $y(t_2) \geqslant m_2$. This is a contradiction. Therefore there exists $t \in [0,2\pi]$ such that $y(t) \geqslant m_2$. Using the same argument as in the previous paragraph, we obtain $y(2\pi) \geqslant m_2$. As a result, $x(2\pi) = y(2\pi) - \beta \sin(2\pi) \geqslant m_2$ as well. {Finally, using standard arguments in Lyapunov theory, there exists $\bar{T} < T$ such that $x(t)$ belongs to the region of attraction of $y_3$ for  all $t\in[\bar T,T]$.}
\end{proof}

We highlight the implications of the above proposition through a numerical example.

\begin{example}\label{exp1}
	Consider the objective function $f(x,t) := g(x- \beta \sin(t))$
	where 
	\begin{equation}
	g(y) := 1/4y^4 + 1/8 y^3 - 2y^2  - 3/2y + 8.
	\end{equation}
	The time-varying objective $f(x,t)$ has the following stationary points: it admits a spurious local minimum at $-2+\beta \sin(t)$, a local maximum at $-3/8 + \beta \sin(t)$, and a global minimum at $2+\beta \sin(t)$. The three sufficient conditions of Proposition \ref{prop:sufficient} can be brought to bear on this example. They yield three inequalities, as shown in Figure \ref{fig:ineq}, whose feasible region is represented in Figure \ref{fig:graph}. Taking a point in that feasible region, we confirm numerically in Figure~\ref{fig:non-spurious} that a trajectory initialized at a local minimum of $f(\cdot,0)$ winds up in the region of attraction of the global solution to $f(\cdot,T)$ at the final time $T=2\pi$. In contrast, taking a point outside the feasible region, we observe in Figure~\ref{fig:spurious} that a trajectory initialized at a local minimum of $f(\cdot,0)$ does not end up in the region of attraction of the global solution to $f(\cdot,T)$.\footnote{In order to increase visibility, a maximal threshold is used on the objective function $f(x,t)$ in Figure \ref{fig:non-spurious} and Figure \ref{fig:spurious} (hence the flat parts). For the same reason, a non-linear scaling is used. Precisely, $(x,t) \longrightarrow f(x+(\beta-1)\sin(t),t)$ and $t \longrightarrow x(t) - (\beta-1) \sin(t)$ are represented in the figures. This explains why $x(t)$ appears to decrease for small $0 \leqslant t \leqslant 2\pi$ in Figure \ref{fig:non-spurious}.}

	We make a few remarks regarding Figure \ref{fig:ineq}. Note that $k_1$ and $k_2$ are integers in $\{0,1,2\}$ such that $k_1$ minimizes the line it appears in, and $k_2$ minimizes the line it appears in while not being equal to $k_1$. These numbers come from Vi\`ete's solution to a cubic equation \cite{nickalls2006}. Furthermore, the second inequality corresponds to minus the discriminant of a fourth-order polynomial.
\end{example}

{Next, we will extend the aforementioned result to a general class of multi-dimensional optimization problems. The goal is to show that certain non-global local solutions of an arbitrary time-invariant function $g(x)$ that cannot be escaped using deterministic local search methods can indeed be escaped via the conversion of the problem to a time-varying function $f(x,t)$ for which there is no spurious trajectory. Consider the time-varying optimization problem
    \begin{equation}
    \label{eq:multi}
        	\inf_{x\in \mathbb{R}^n} f(x,t) := \inf_{x\in \mathbb{R}^n}g(x-\beta e^{-\lambda t} \sin(\omega t) u)
    \end{equation}
    where $g: \mathbb{R}^n \longrightarrow \mathbb{R}$ is continuously twice differentiable, coercive (its limit as $\|y\| \rightarrow +\infty$ is $+\infty$). The amplitude $\beta>0$ and the pulsation $\omega>0$ model the sinusoidal variation of data over time with a damping factor of $\lambda>0$. The variation occurs along a direction $u \in \mathbb{R}^n$ of norm 1. Let $\{y_i\}_{i\in \mathcal{I}}$ denote the set of spurious local minima of $g(x)$. Moreover, let $B(a,r)$ (respectively $S(a,r)$) denote the Euclidian ball (respectively sphere) in $\mathbb{R}^n$ centered at $a$ and of radius $r$. Given a fixed $R>0$, we define the following constants
	\begin{align}
	     C_1 &:= \max\limits_{\scriptsize y\in \bigcup\limits_{i\in \mathcal{I}} B(y_i,R)} \|\nabla g(y)\|,\nonumber\\[.2cm]
	     C_2 &:= \min\limits_{\tiny \begin{array}{c} d \in S(0,1) \\ i\in \mathcal{I} \end{array}} \langle \nabla g(y_i-Rd) , d \rangle.
	\end{align}
	These constants enable us to control fluctuations of $g(x)$ in the vicinity of its local minima. For the sake of clarity, we assume that $g(x)$ has no saddle points and local maxima outside of $\cup_{ i\in \mathcal{I}} B(y_i,R)$ (for more on this, see Remark \ref{rem:saddle}). 
	Notice that $C_1\geqslant C_2$ due to the Cauchy-Schwarz inequality. Theorem \ref{th:sufficient} below shows that if $C_1$ is not too large, then one can escape spurious local minima, and if $C_2$ is not too small, then one will never return to the vicinity of any spurious local minima after some time. \begin{theorem}
	\label{th:sufficient}
    If 
    $ 2 \alpha \omega (\beta e^{-\lambda \pi/(2\omega)} - R )/\pi  > C_1 $
    ~and~ $\alpha \beta e^{-\lambda R\alpha/(C_1 +\alpha \beta \omega)} \sqrt{\lambda^2+\omega^2} <  C_2$, then 
    the time-varying optimization \eqref{eq:multi} has no spurious trajectories.
	\end{theorem}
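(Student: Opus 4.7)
The plan is to adapt the argument of Proposition \ref{prop:uni} to the multi-dimensional setting. The problem is unconstrained, so $\mathcal{J}$ drops out and the ODE \eqref{eq:ode} reduces to $\dot{x}=-\tfrac{1}{\alpha}\nabla_x f(x,t)$. My first move is the natural change of variables $y(t):=x(t)-\beta e^{-\lambda t}\sin(\omega t)\,u$, which transforms the ODE into
\begin{equation}
\dot{y} \;=\; -\frac{1}{\alpha}\nabla g(y) \;-\; \phi(t)\,u, \qquad \phi(t) \;:=\; \beta e^{-\lambda t}\bigl[\omega\cos(\omega t)-\lambda\sin(\omega t)\bigr],
\end{equation}
so that $|\phi(t)|\le \beta e^{-\lambda t}\sqrt{\lambda^2+\omega^2}$ and $\int_{t_0}^{t_0+\pi/(2\omega)}\phi(s)\,ds$ evaluates in closed form via the antiderivative $\beta e^{-\lambda s}\sin(\omega s)$. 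This separates the ``driving force'' of the data variation from the gradient dynamics of the time-invariant $g$, so the task reduces to showing that $y(t)$ eventually leaves every $B(y_i,R)$ and never comes back.

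The second step is the \emph{escape} argument, which uses Condition 1. Suppose for contradiction that $y(t)\in B(y_i,R)$ for all $t$ in some quarter-period $[\hat t,\hat t+\pi/(2\omega)]$. Integrating the transformed ODE on this interval, I would bound the gradient contribution in norm by $\pi C_1/(2\omega\alpha)$ (using $\|\nabla g\|\le C_1$ on $B(y_i,R)$) and compute the oscillation contribution exactly: its component along $u$ has magnitude at least $\beta e^{-\lambda(\hat t+\pi/(2\omega))}$. Comparing with the ball diameter $2R$ and applying Condition 1 gives $\|y(\hat t+\pi/(2\omega))-y(\hat t)\|>2R$, a contradiction. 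This shows each visit to $B(y_i,R)$ has length at most $\pi/(2\omega)$.

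The third step is the \emph{no-return} argument, which uses Condition 2 and is the delicate part. Let $T^\ast:=R\alpha/(C_1+\alpha\beta\omega)$; this is a lower bound on the crossing time across a ball of radius $R$, since on $B(y_i,R)$ one has $\|\dot y\|\le C_1/\alpha+\beta\omega$ (to leading order, with the cruder bound $\beta\omega$ subsumed in the condition). Hence the first time the trajectory exits some $B(y_i,R)$ is bounded below by $T^\ast$, and I would show that after this, the trajectory cannot return. At any putative re-entry time $t_\ast\ge T^\ast$, the boundary point is of the form $y_i-Rd$ with $d\in S(0,1)$, and the inward radial velocity must be non-negative: $\langle \dot y(t_\ast),d\rangle\ge 0$. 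Substituting the ODE and using the definition of $C_2$ to bound the gradient contribution, together with $|\phi(t_\ast)|\le \beta e^{-\lambda T^\ast}\sqrt{\lambda^2+\omega^2}$, Condition 2 forces $\langle \dot y(t_\ast),d\rangle<0$, a contradiction.

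The fourth step closes the loop. Once $y(t)\notin \bigcup_{i\in\mathcal I}B(y_i,R)$ for all $t\ge T^\ast$, the assumption that $g$ has no saddles or local maxima outside $\bigcup_i B(y_i,R)$ lets me use $g(y(t))$ as a Lyapunov function: its derivative along trajectories is $-\tfrac{1}{\alpha}\|\nabla g(y)\|^2-\phi(t)\langle u,\nabla g(y)\rangle$, which, together with the vanishing of $\phi(t)$ as $t\to\infty$ and coercivity of $g$, forces $y(t)$ into the region of attraction of a global minimum of $g$. Translating back via $x(t)=y(t)+\beta e^{-\lambda t}\sin(\omega t)u$ and recalling that the perturbation is damped, one concludes that $x(t)$ lies in the region of attraction of a global minimum of $f(\cdot,t)$ for all $t$ past some $\bar T$, as required by Definition \ref{def:spurious}. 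The main obstacle I anticipate is Step 3: one must carefully justify the specific constants (especially the exponent $-\lambda R\alpha/(C_1+\alpha\beta\omega)$), track the sign convention in the definition of $C_2$ so that it genuinely lower-bounds the outward radial gradient, and handle the possibility that different balls $B(y_i,R)$ are visited in sequence rather than just one.
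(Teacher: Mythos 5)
Your overall strategy is the same as the paper's: the change of variables $y(t)=x(t)-\beta e^{-\lambda t}\sin(\omega t)u$, an escape step driven by Condition 1, a no-return step at re-entry points driven by Condition 2 together with the exit-time lower bound $R\alpha/(C_1+\alpha\beta\omega)$ (which is where the exponent comes from), and a frozen-time Lyapunov/LaSalle argument using coercivity and the no-saddle assumption to land in the region of attraction of a global minimum. Your Steps 3 and 4 are essentially the paper's second and third steps. The genuine gap is in your Step 2. You claim that over an \emph{arbitrary} quarter-period $[\hat t,\hat t+\pi/(2\omega)]$ the forcing displacement along $u$ has magnitude at least $\beta e^{-\lambda(\hat t+\pi/(2\omega))}$, and conclude that every visit to $B(y_i,R)$ lasts at most $\pi/(2\omega)$. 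This is false: since the antiderivative of $\phi$ is $\beta e^{-\lambda s}\sin(\omega s)$, the integral over that window equals $\beta e^{-\lambda(\hat t+\pi/(2\omega))}\cos(\omega\hat t)-\beta e^{-\lambda\hat t}\sin(\omega\hat t)$, which can vanish (e.g.\ when $\tan(\omega\hat t)=e^{-\lambda\pi/(2\omega)}$) and in any case decays to $0$ as $\hat t\to\infty$, so it cannot beat the fixed threshold $2R$ at late times. Indeed the statement "every visit is short" cannot be true in general for a damped forcing: once $\beta e^{-\lambda t}$ is small, a trajectory sitting near a local minimum would stay there. Moreover, even at $\hat t=0$ Condition 1 only yields a displacement exceeding $R$, not the diameter $2R$ you compare against; it suffices precisely because the trajectory starts at the \emph{center} $y(0)=y_i$ of the ball.

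The repair is to use the escape argument only where it is needed and where it works: on the initial interval $[0,\pi/(2\omega)]$, under the contradiction hypothesis $\|y(t)-y(0)\|\le R$, where the forcing integral is exactly $\beta e^{-\lambda\pi/(2\omega)}$ and Condition 1 gives $\|y(\pi/(2\omega))-y(0)\|>R$. Your no-return step then guarantees the trajectory never re-enters any $B(y_i,R)$ after the first exit, so the global "short visits" claim is unnecessary. Two smaller points: (i) your pointwise speed bound $\|\dot y\|\le C_1/\alpha+\beta\omega$ is not valid in general, since $|\phi(t)|\le\beta e^{-\lambda t}\sqrt{\lambda^2+\omega^2}$ can exceed $\beta\omega$; to get the exit-time bound $t_1>R\alpha/(C_1+\alpha\beta\omega)$ with exactly these constants, integrate the forcing exactly and use $\beta e^{-\lambda t_1}\sin(\omega t_1)\le\beta\omega t_1$, as the paper does; (ii) at a re-entry time you should take the last boundary-crossing time before the re-entry (as the paper does with its maximal $t_2$) so that the sign condition $\langle\dot y,d\rangle\ge 0$, and hence the contradiction with Condition 2, is rigorous.
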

	\begin{proof}
	First, we show that the spurious local minimum is initially escaped. A continuous local trajectory $x(t)$ satisfies
\begin{equation}
x(0)\in \{y_i\}_{i\in \mathcal{I}},\qquad x'(t) = -\frac{1}{\alpha}\nabla_x f(x(t),t),
\end{equation}
which, after the change of variables $y(t) := x(t) - \beta e^{-\lambda t} \sin(\omega t) u$, reads
\begin{align}
y'(t) &= -\nabla g(y(t))/\alpha -\beta e^{-\lambda t}[-\lambda  \sin(\omega t) +  \omega \cos(\omega t)]u,\nonumber\\
y(0)&\in \{y_i\}_{i\in \mathcal{I}},
\end{align}
We first show by contradiction that there exists some time $t\in[0,T]$ such that $\|y(t)- y(0)\| > R > 0$. Assume that $\|y(t)- y(0)\| \leqslant R$ for all $t\geqslant 0$. Then, for all $t\geqslant 0$, it holds that
\begin{align}
         & \langle y'(t) , u \rangle \nonumber\\
         & = \langle -\nabla g(y(t))/\alpha -\beta e^{-\lambda t}[-\lambda  \sin(\omega t) +  \omega \cos(\omega t)] u , u \rangle \nonumber\\
         & = -\langle \nabla g(y(t)) , u \rangle/\alpha  -\beta e^{-\lambda t}[-\lambda  \sin(\omega t) +  \omega \cos(\omega t)] \langle  u , u \rangle\nonumber\\
         & \leqslant \|\nabla g(y(t))\| / \alpha -\beta e^{-\lambda t}[-\lambda  \sin(\omega t) +  \omega \cos(\omega t)] \nonumber \\
         & \leqslant \{C_1 -\alpha \beta e^{-\lambda t}[-\lambda  \sin(\omega t) +  \omega \cos(\omega t)] \}/ \alpha,
\end{align}
from which we deduce that
\begin{align}
     \langle y(t) - y(0) , u \rangle  &=  \left\langle \int_0^t y'(s) ds , u \right\rangle
      =  \int_0^t \langle y'(s) , u \rangle ds\nonumber\\
      &\leqslant [ C_1t - \alpha \beta e^{-\lambda t} \sin(\omega t) ]  / \alpha.\label{eq:int}
\end{align}
Our assumption that $2 \alpha \omega (\beta e^{-\lambda \pi/(2\omega)} - R )/\pi  > C_1 $ implies that the upper bound in \eqref{eq:int} is negative when $t=\pi/(2\omega)$. Using the Cauchy-Schwarz inequality, we then obtain
\begin{align}
\|y(\pi/(2\omega))-y(0)\| &\geqslant |\langle y(\pi/(2\omega)) - y(0) , u \rangle| \nonumber\\
\nonumber&\geqslant [ \alpha \beta e^{-\lambda \pi/(2\omega)} - C_1 \pi/(2\omega)  ]  / \alpha > R.
\end{align}
This yields a contradiction. We conclude that there exists $ t_1 \geqslant 0$ such that $\|y(t_1)-y(0)\| > R$. Observe that 
\begin{align}
     \|y(t_1) - y(0)\| & = \left\| \int_{0}^{t_1} \nabla g(y(t)) dt - \beta e^{-\lambda t_1} \sin(\omega t_1)u \right\|\nonumber\\
     & = \int_{0}^{t_1} \left\|\nabla g(y(t)) dt\right\| 
     + \beta e^{-\lambda t_1} \sin(\omega t_1) \nonumber\\
     & \leqslant C_1t_1/\alpha + \beta e^{-\lambda t_1}  \sin(\omega t_1)\nonumber\\
     & \leqslant (C_1/\alpha + \beta \omega)t_1.
\end{align}
As a result, $t_1 > R\alpha / (C_1 +\alpha \beta \omega)$. We have thus identified a minimum time taken by the trajectory to exit the ball of radius $R$ centered at $y(0)$. 
Second, we show that, after some time, the continuous trajectory never returns to the vicinity of any spurious local minimum. To reason by contradiction, assume that there exist $i \in \mathcal{I}$ and $t_1 < t_3$ such that $\|y(t_3)-y_i\| < R$. Since the trajectory is continuous, there exists $t_2\in(t_1,t_3)$ such that $\|y(t_2)-y_i\| = R$, that is to say, there exists $d \in \mathbb{R}^n$ such that $\|d\|=1$ and $y(t_2) = y_i + R d$. Take $t_2$ to be the largest such instance in the interval $(t_1,t_3)$. We then have
\begin{align}
         &\langle y'(t_2) , d \rangle\nonumber\\
         & = \langle -\nabla g(y(t_2))/\alpha -\beta e^{-\lambda t_2}[-\lambda  \sin(\omega t_2) +  \omega \cos(\omega t_2)]u , d \rangle \nonumber\\
         & = \langle \nabla g(y_i+Rd) , -d \rangle / \alpha  -\beta e^{-\lambda t_2}[-\lambda  \sin(\omega t_2) +  \omega \cos(\omega t_2)] \langle  u , d \rangle \nonumber\\
         & \geqslant C_2 / \alpha - \beta e^{-\lambda t_2}[-\lambda  \sin(\omega t_2) +  \omega \cos(\omega t_2)] \langle  u , d \rangle \nonumber\\
         & = \left\{C_2  - \alpha \beta e^{-\lambda t_2}\sqrt{\lambda^2+\omega^2} \cos(\omega t_2 + \arctan(\lambda/\omega))]\right\} / \alpha\nonumber\\
         & \geqslant (C_2  - \alpha \beta e^{-\lambda t_2} \sqrt{\lambda^2+\omega^2} ) / \alpha \nonumber\\
         & \geqslant (C_2  - \alpha \beta e^{-\lambda R\alpha/(C_1 +\alpha \beta \omega)} \sqrt{\lambda^2+\omega^2} ) / \alpha
\end{align}
where in the last inequality we used the fact that $R\alpha/(C_1 +\alpha \beta \omega) \leqslant t_1 < t_2$. The Taylor expansion for $t>t_2$ in a neighborhood of $t_2$ reads
\begin{equation}
\label{eq:taylor}
      y(t) - y(t_2) = y'(t_2)(t-t_2) + o( t - t_2),
\end{equation}
from which we deduce that
\begin{align}
\label{eq:taylorbis}
&\left\langle \frac{y(t) - y(t_2)}{t-t_2} , d \right\rangle \nonumber\\
&= \langle y'(t_2) , d \rangle + o(1)\nonumber\\
&> (C_2  - \alpha \beta e^{-\lambda R\alpha/(C_1 +\alpha \beta \omega)} \sqrt{\lambda^2+\omega^2} ) / (2 \alpha) > 0
\end{align}
where we used
$\alpha \beta e^{-\lambda R\alpha/(C_1 +\alpha \beta \omega)} \sqrt{\lambda^2+\omega^2} < C_2$. Hence 
\begin{align}
     \| y(t) - y_i \|  
      \geqslant  \langle y(t)-y_i , d \rangle 
      &=  \langle y(t)-y(t_2)+y(t_2)-y_i , d \rangle \nonumber\\
      &=  \langle y(t)-y(t_2),d \rangle + \langle Rd , d \rangle\nonumber\\
      &> R.
\end{align}
Recall that $\|y(t_3)-y(0)\|\leqslant R$. By continuity of the trajectory, there exists $t\in(t_2,t_3]$ such that $\| y(t) - y_i \| = R$, which contradicts the maximality of $t_2$. Hence, for all $t \geqslant t_1 $ and $i \in \mathcal{I}$, we have that $\|y(t) - y_i\| \geqslant R$. \\
Third, we show that $x(t_1)=y(t_1)+\beta e^{-\lambda t_1} \sin(wt_1)u$ is in the region of attraction of a global minimum of the function $f(x,t_1)$. Now, we freeze the time at $t_1$. Consider the set $D=\{x\in \mathbb{R}^n: f(x,t_1)\leq f(x(t_1),t_1)\}$ and choose $D_1$ as the connected component of $D$ which contains the point $x(t_1)$. Because $f(x,t_1)$ is coercive, $D_1$ is a compact set. In addition, $D_1$ is a positively invariant set with respect to the gradient flow system
\begin{equation}\label{eq: gradient flow at t1}
    \dot{\tilde{x}}(s) = -\nabla_{\Tilde{x}} f(\Tilde{x}(s),t_1)
\end{equation}
 for the fixed time $t_1$ because the gradient flow system will not increase the function value. Denote $f^\ast(t_1)$ as the global minimum value of $f(\Tilde{x},t_1)$ and take $V(\Tilde{x})=f(\Tilde{x},t_1)-f^\ast(t_1)$. Then, $V(\Tilde{x})$ is a Lyapunov function for \eqref{eq: gradient flow at t1} such that  $\dot V(\Tilde{x})=- \|\nabla_{\Tilde{x}} f(\Tilde{x},t_1)\|^2\leq 0$ in $D_1$. Let $E$ be the points in $D_1$ such that $\nabla_{\Tilde{x}} f(\Tilde{x},t_1)=0$. Since $g(x)$ has no saddle points and local maxima outside of $\cup_{ i\in \mathcal{I}}B(y_i,R)$, then $f(\cdot,t_1)$ has no saddle points and local maxima outside of $\cup_{ i\in \mathcal{I}}B(y_i+\beta e^{-\lambda t_1} \sin{(wt_1)}u,R)$. Thus,
 the set $E$ only contains the global minima of $f(\Tilde{x},t_1)$. Furthermore, the set $E$ is also an invariant set with respect to \eqref{eq: gradient flow at t1}. Then, by LaSalle's theorem in  \cite[Theorem 4.4]{khalil2002nonlinear}, the solution of \eqref{eq: gradient flow at t1} starting at $x(t_1)$ converges to the global minimum as $s \rightarrow \infty$. This implies that $x(t_1)$ is in the region of attraction of a global minimum of the function $f(x,t_1)$. Finally, we show that the trajectory remains in the region of attraction of the set of global minima after some time. This follows immediately from the assumption that $g(x)$ has no saddle points and local maxima outside of $\cup_{ i\in \mathcal{I}} B(y_i,R)$ and the fact that the trajectory will never returns to the vicinity of any spurious local minimum, that is,  $\cup_{ i\in \mathcal{I}} B(y_i,R)$.
	\end{proof}}
	{
	Observe that a necessary condition for the absence of spurious trajectories readily follows from the proof of Theorem \ref{th:sufficient}, namely that $\alpha \beta \sqrt{\omega^2+\lambda^2} \geqslant -C_2$. Indeed, if $\alpha \beta \sqrt{\omega^2+\lambda^2} < -C_2$, then the spurious local minima cannot be escaped, using the same argument as in \eqref{eq:taylor} and \eqref{eq:taylorbis}.}
{\begin{remark}
\label{rem:saddle}
    In Theorem \ref{th:sufficient}, we assume that there are no saddle points or maxima outside of a certain region containing the local minima (i.e. $\cup_{ i\in \mathcal{I}} B(y_i,R)$). We do so in order to focus on the main contribution of this work, which is that time variation can lead to the absence of spurious local trajectories. Without this assumption, a significant part of the proof would deal with escaping saddle points, a subject which has already been treated in various papers \cite{lee2016,panageas2017,davis2019,chinge2017,criscitiello2019}. If the variation of the data occurs along a direction $u$ chosen randomly, then it may be argued that the trajectory would escape saddle points with probability 1, using the stable manifold theorem~\cite{chicone2006ordinary} as in \cite{lee2016,panageas2017,davis2019,chinge2017,criscitiello2019}. Theorem \ref{th:sufficient} would then hold almost surely.
    \begin{remark}
    Theorem 3 offers the first result in the literature about when spurious minima of a time-invariant function can be escaped via a time-varying deterministic local search method. The existing results are focused on stochastic gradient descent that offers a weaker result in a probabilistic sense \cite{pmlrv80kleinberg18a}. This theorem can be used to define the notion of escapable local minima through the parameters $C_1$ and $C_2$, and indeed if $C_1$ is small enough and $C_2$ is large enough, the spurious local minima can always be escaped based on the results of this theorem. 
   \end{remark}
\end{remark}
Although Theorem \ref{th:sufficient} is focused on a certain class of time-varying functions, similar results can be obtained for other classes of functions. The time-varying problem~\eqref{eq:problem} is devoid of spurious local trajectories if one can show that all solutions of ~\eqref{eq:ode} with the initial point at any local solutions at $t=0$ are contractive and the converging trajectory is inside the region of attraction of the global minimum trajectory  of \eqref{eq:problem} after some finite time. This can be studied via the contraction analysis of nonlinear systems \cite{lohmiller1998contraction, lohmiller2000nonlinear, lu2016contraction}. 
}
    
\section{{Fundamental Properties of ODE}}
\label{sec:theo_found}
		In this section, we provide the formal versions of Theorems~\ref{thm:exist_unique} and~\ref{thm:conv} together with their proofs. 
	{We refer to the optimization problem \eqref{obj_reg} as $\mathrm{OPT}(k,\Delta t, x_{k-1})$.}
	Let the Jacobian of the constraint set be defined as
	\begin{align}
	\mathcal{J}(x) = \begin{bmatrix}
	\nabla_x h_1(x)^\top\\
	\nabla_x h_2(x)^\top\\
	\vdots\\
	\nabla_x h_r(x)^\top
	\end{bmatrix}
	\end{align}
	
	\begin{definition}\label{def1}
		\begin{sloppypar}
			{Given a feasible initial point $x_0$, we say that the tuple $\left(x_0,\Delta t,\{x_k^{\Delta t}\}_{k=0}^{\infty}\right)$ is an \textbf{admissible KKT (AKKT) tuple} if $x_0^{\Delta t} = x_0$ and for every $k\in\{0,1,...\}$, $x^{\Delta t}_k$ is a feasible solution of $\mathrm{OPT}(k,\Delta t, x^{\Delta t}_{k-1})$, it satisfies the KKT conditions, and $\mathcal{J}(x^{\Delta t}_k)$ is non-singular.}
		\end{sloppypar}
	\end{definition}
	
	\begin{assumption}\label{assum1}
		{There exists $\overline{t}>0$ such that any $0<\Delta t\leq \overline{t}$ is endowed with at least one AKKT tuple $\left(x_0,\Delta t,\{x_k^{\Delta t}\}_{k=0}^{\infty}\right)$. Furthermore, for any AKKT tuple $\left(x_0,\Delta t,\{x_k^{\Delta t}\}_{k=0}^{\infty}\right)$, the sequence $\left\{x_0, \{x_k^{\Delta t}\}_{k=0}^{\infty}\right\}$ is uniformly bounded.}
	\end{assumption}
	{Roughly speaking, Assumption~\ref{assum1} implies that, for sufficiently small time steps, the regularized problem remains feasible with non-degenerate and bounded solutions.}
	
	
	{According to  Definition~\ref{def1}, the Jacobian matrix $\mathcal{J}(x_k^{\Delta t})$ is non-singular for every {$k$} and every AKKT tuple $\left(x_0,\Delta t,\{x_k^{\Delta t}\}_{k=1}^{\infty}\right)$.
	In this work, we impose a slightly stronger condition on the singular values of $\mathcal{J}(x_k^{\Delta t})$.}
	
	\begin{assumption}\label{assum2}
		There exists a universal constant $c>0$ such that $\sigma_{\min}(\mathcal{J}(x_k^{\Delta t}))\geq c$ for every $k$ and every AKKT tuple $\left(x_0,\Delta t,\{x_k^{\Delta t}\}_{k=0}^{\infty}\right)$.
	\end{assumption}
	
	{Similar to Assumption~\ref{assum22}, this assumption requires the constraints to be non-degenerate.} Now, we are ready to present our main theorem.
	
	\begin{theorem}\label{thm:ode}
		{Consider the ODE~\eqref{eq:ode}
		with the condition $x(0) = x_0$, where $x_0$ is a local solution to the time-varying optimization~\eqref{eq:problem} at $t = 0$. The following statements hold:}
		\begin{itemize}
			\item[1.] (Existence and uniqueness)~\eqref{eq:problem} has a continuously differentiable and unique {solution $x:[0,T]\to\mathbb{R}^n$.}
			\item[2.] {(Convergence) Any AKKT tuple $\left(x_0,\Delta t,\{x_k^{\Delta t}\}_{k=0}^{\lceil T/\Delta t\rceil}\right)$ satisfies
			\begin{align}\label{converge1}
			\lim\limits_{\Delta t \to 0^+}\sup_{0\leq k\leq {\lceil T/\Delta t\rceil}}\|x_k^{\Delta t}-x(k\Delta t)\| = 0,
			\end{align}}
		\end{itemize}
	\end{theorem}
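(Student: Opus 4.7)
The plan is to pass to the limit in the KKT conditions of the regularized problem $\mathrm{OPT}(k,\Delta t, x_{k-1}^{\Delta t})$, recognize the resulting relation as (essentially) a backward Euler discretization of the ODE \eqref{eq:ode}, and leverage the non-singularity Assumption~\ref{assum2} to obtain a compactness/uniqueness argument that settles both parts of the theorem.

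First, I would write down the KKT conditions for $x_k^{\Delta t}$: there exist multipliers $\lambda_k^{\Delta t}\in\mathbb{R}^m$ such that
\begin{equation*}
\nabla_x f(x_k^{\Delta t},t_k) + \alpha\,\frac{x_k^{\Delta t}-x_{k-1}^{\Delta t}}{\Delta t} + \mathcal{J}(x_k^{\Delta t})^\top \lambda_k^{\Delta t} = 0, \qquad h(x_k^{\Delta t})=d(t_k).
\end{equation*}
Because Assumption~\ref{assum2} guarantees that $\mathcal{J}(x_k^{\Delta t})$ has full row rank with $\sigma_{\min}\ge c$, I can left-multiply the stationarity relation by $\mathcal{J}(x_k^{\Delta t})(\mathcal{J}(x_k^{\Delta t})\mathcal{J}(x_k^{\Delta t})^\top)^{-1}$ to eliminate $\lambda_k^{\Delta t}$, and substitute the resulting identity back to obtain
\begin{equation*}
\frac{x_k^{\Delta t}-x_{k-1}^{\Delta t}}{\Delta t} = -\frac{1}{\alpha}\,\eta(x_k^{\Delta t},t_k) + \theta(x_k^{\Delta t})\,\frac{d(t_k)-d(t_{k-1})}{\Delta t} + O(\Delta t),
\end{equation*}
where the $O(\Delta t)$ term arises from discretizing $\dot d$ via a finite difference together with a Taylor remainder controlled by the boundedness in Assumption~\ref{assum1}. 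This is precisely the backward Euler scheme associated with \eqref{eq:ode}.

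Second, for existence I would interpolate $\{x_k^{\Delta t}\}$ piecewise-linearly into a family $y_{\Delta t}:[0,T]\to\mathbb{R}^n$. Uniform boundedness of the family follows from Assumption~\ref{assum1}; equicontinuity follows from the displayed discrete identity together with the uniform bounds on $\nabla_x f$, $\eta$, $\theta$, and $\dot d$ (which in turn follow from Assumption~\ref{assum2} and the bound on the domain). Arzelà--Ascoli then yields a subsequence $y_{\Delta t_j}$ converging uniformly to some continuous $y:[0,T]\to\mathbb{R}^n$. Passing to the limit in the discrete identity (using continuity of $\eta$ and $\theta$ on the compact tube swept out by the $y_{\Delta t_j}$, which itself uses the uniform lower bound on $\sigma_{\min}(\mathcal{J})$) shows that $y$ is continuously differentiable and solves \eqref{eq:ode} with $y(0)=x_0$.

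Third, for uniqueness I would use Assumption~\ref{assum2} to exhibit an open connected set $\mathcal{D}\subset\mathbb{R}^n\times[0,T]$ containing the graph of $y$ on which $x\mapsto \mathcal{J}(x)\mathcal{J}(x)^\top$ is uniformly invertible; on $\mathcal{D}$, the map $(x,t)\mapsto -\frac{1}{\alpha}\eta(x,t)+\theta(x)\dot d(t)$ is continuously differentiable in $x$ (since $f,h,d$ are $C^1$ in $x$ and $d$ is $C^1$ in $t$), hence locally Lipschitz in $x$. The classical Picard--Lindel\"of uniqueness result \cite[Theorem 2.2]{coddington1955theory} then gives uniqueness of the continuously differentiable solution on $[0,T]$, proving Part 1.

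Finally, for the convergence claim of Part 2, I would argue by contradiction: if \eqref{converge1} failed, then along some sequence $\Delta t_j\downarrow 0$ the suprema would stay bounded away from zero, but the Arzelà--Ascoli argument above (applied to this sequence) produces a uniformly convergent sub-subsequence whose limit is again a continuously differentiable solution of \eqref{eq:ode} with initial value $x_0$; by the already-established uniqueness this limit must equal $x$, contradicting the assumed lower bound on the supremum. This upgrades sub-sequential convergence to full uniform convergence along every AKKT sequence. The main obstacle I expect is the passage to the limit in the KKT relation on step three: I must keep the non-singularity of $\mathcal{J}\mathcal{J}^\top$ uniform along the entire approximating family (not merely at each $x_k^{\Delta t}$) and control the error between $(d(t_k)-d(t_{k-1}))/\Delta t$ and $\dot d(t_k)$ uniformly in $k$, so that the pointwise backward-Euler identity yields $\dot y = -\eta(y,t)/\alpha + \theta(y)\dot d$ and not merely a distributional equality.
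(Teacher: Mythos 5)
Your plan is correct in substance, and for Part 1 it follows essentially the same route as the paper: eliminate the multipliers from the KKT system of $\mathrm{OPT}(k,\Delta t,x_{k-1}^{\Delta t})$ using the full-row-rank of $\mathcal{J}$ (Assumption~\ref{assum2}), recognize the resulting relation as a backward-Euler-type identity, extract a uniformly convergent interpolant by Arzel\`a--Ascoli, pass to the limit, and obtain uniqueness from local Lipschitz continuity of the right-hand side on a neighborhood of the graph together with \cite[Theorem 2.2]{coddington1955theory}. Where you genuinely diverge is Part 2: the paper constructs backward Euler iterates $y_k^{\Delta t}$ for \eqref{eq:ode} (Lemma~\ref{lem:back_euler}) and compares an \emph{arbitrary} AKKT sequence with them through an inductive, discrete Gr\"onwall-type error recursion, which yields \eqref{converge1} with an explicit $O(\Delta t)$ rate; you instead argue by compactness plus uniqueness: if convergence failed along some $\Delta t_j\downarrow 0$, Arzel\`a--Ascoli (whose hypotheses hold uniformly over all AKKT tuples by Assumptions~\ref{assum1}--\ref{assum2} and the uniform step bound of Lemma~\ref{lem:cont2}) would produce a sub-subsequential limit that solves \eqref{eq:ode} with initial value $x_0$ and hence equals $x$, a contradiction. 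This soft argument is valid and shorter, since it recycles the existence machinery, but it is non-quantitative, whereas the paper's Euler comparison gives a convergence rate.

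One detail you should make explicit to avoid circularity: the $O(\Delta t)$ remainder in your displayed discrete identity (replacing the mean-value Jacobian by $\mathcal{J}(x_k^{\Delta t})$, and the finite difference by $\dot d$) is of size $O\bigl(\|x_k^{\Delta t}-x_{k-1}^{\Delta t}\|^2/\Delta t\bigr)$, so it requires the step bound $\|x_k^{\Delta t}-x_{k-1}^{\Delta t}\|=O(\Delta t)$, which does not follow from Assumption~\ref{assum11} alone. The paper first gets $\|x_k^{\Delta t}-x_{k-1}^{\Delta t}\|=O(\sqrt{\Delta t})$ from the proximal term (Lemma~\ref{l_app}), uses this to guarantee invertibility of the mean-value Jacobian product and boundedness of the exact right-hand side, and only then bootstraps to the $O(\Delta t)$ bound (Lemma~\ref{lem:cont2}); your equicontinuity claim implicitly relies on this two-step bootstrap, and it should be stated rather than attributed to boundedness of the iterates alone. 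With that addition, your argument goes through.
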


We will regularly refer to the following lemma in our subsequent analysis.
\begin{lemma}[Lipschitz property on a ball]\label{lem:fun}
	Given a continuously differentiable function $p(x):\mathbb{R}^n\to\mathbb{R}^m$, we have 
	\begin{align*}
	\|p(x)-p(y)\|\leq L(\epsilon)\|x-y\|\qquad \text{for every } x,y\in\mathcal{B}(\epsilon)
	\end{align*}
	where $L(\epsilon)$ is a universal constant independent of $x$ and $y$, and $\mathcal{B}(\epsilon)$ is the Euclidean ball centered at zero with radius $\epsilon$.
\end{lemma}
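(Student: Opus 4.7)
The plan is to use the standard approach of bounding the Lipschitz constant by the supremum of the Jacobian norm on the closed ball, exploiting the compactness of $\overline{\mathcal{B}(\epsilon)}$ and the continuity of $\nabla p$.

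First, I would observe that the closed ball $\overline{\mathcal{B}(\epsilon)}$ is a compact subset of $\mathbb{R}^n$. Since $p$ is continuously differentiable, the map $x \mapsto \|\nabla p(x)\|$ (using any fixed matrix norm, say the operator norm induced by the Euclidean norm) is continuous on $\mathbb{R}^n$. By the extreme value theorem, this map attains a finite maximum on $\overline{\mathcal{B}(\epsilon)}$. I would define
\begin{equation*}
L(\epsilon) := \sup_{z \in \overline{\mathcal{B}(\epsilon)}} \|\nabla p(z)\|,
\end{equation*}
which is a finite constant depending only on $\epsilon$ and the function $p$, but not on any specific pair of points $x, y$.

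Next, I would exploit the convexity of the ball. For any $x, y \in \mathcal{B}(\epsilon)$, the line segment $\gamma(t) = y + t(x-y)$ for $t \in [0,1]$ lies entirely inside $\overline{\mathcal{B}(\epsilon)}$. Applying the fundamental theorem of calculus componentwise to $t \mapsto p(\gamma(t))$, I obtain
\begin{equation*}
p(x) - p(y) = \int_0^1 \nabla p\bigl(y + t(x-y)\bigr)(x-y)\, dt.
\end{equation*}
Taking norms and using the triangle inequality for integrals together with the sub-multiplicative property of the operator norm gives
\begin{equation*}
\|p(x) - p(y)\| \leq \int_0^1 \|\nabla p(y + t(x-y))\|\, \|x-y\|\, dt \leq L(\epsilon)\,\|x-y\|,
\end{equation*}
which is the desired bound.

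There is essentially no obstacle here; the only minor care needed is to ensure that $L(\epsilon)$ is well-defined and finite, which follows from the compactness of $\overline{\mathcal{B}(\epsilon)}$ combined with the continuity of $\nabla p$. The argument does not require any additional assumption beyond $C^1$ regularity, and the resulting constant $L(\epsilon)$ is manifestly independent of the particular choice of $x, y \in \mathcal{B}(\epsilon)$, as claimed.
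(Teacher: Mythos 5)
Your proof is correct: the compactness of the closed ball plus continuity of $\nabla p$ gives a finite bound $L(\epsilon)$ on the Jacobian norm, and integrating along the segment (which stays in the convex ball) yields the Lipschitz estimate. The paper omits the proof as ``straightforward,'' and your argument is exactly the standard one it has in mind, so there is nothing further to reconcile.
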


\begin{proof}
{The proof is straightforward and omitted for brevity.}
\end{proof}

\subsection{Proof of Existence and Uniqueness.} Next, we show the existence and uniqueness of the solution to the proposed ODE. {Without loss of generality, we assume that $t_k-t_{k-1} = \Delta t$ for every $k = 1,\dots, {\lceil T/\Delta t\rceil}$.} Furthermore, to simplify the notation, we may use the same symbols to refer to different universal constants throughout the proofs. The next three lemmas will be useful in proving the existence of a solution~\eqref{eq:ode}.

\begin{lemma}\label{lem:cont2}
	There exist constants $\bar{t}$ and $c>0$ such that for every AKKT tuple {$\left(x_0,\Delta t,\{x_k^{\Delta t}\}_{k=0}^{\lceil T/\Delta t\rceil}\right)$} with $\Delta t\leq \bar{t}$, we have $\|x_k^{\Delta t}-x_{k-1}^{\Delta t}\|\leq c\Delta t$ for  $k = 1,\dots,\infty$.
\end{lemma}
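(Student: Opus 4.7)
The plan is to combine the KKT first-order conditions of $\mathrm{OPT}(k,\Delta t, x_{k-1}^{\Delta t})$ with the feasibility equations at times $t_{k-1}$ and $t_k$ to derive a quadratic inequality for $\|\Delta x\|:=\|x_k^{\Delta t}-x_{k-1}^{\Delta t}\|$, from which I would then extract the desired linear bound in $\Delta t$. Because $x_k^{\Delta t}$ is a KKT point, there exists a multiplier $\lambda_k\in\mathbb{R}^m$ such that
\begin{equation*}
\nabla_x f(x_k^{\Delta t}, t_k) + \frac{\alpha}{\Delta t}\,\Delta x + \mathcal{J}(x_k^{\Delta t})^\top \lambda_k = 0,
\end{equation*}
while feasibility of both iterates gives $h(x_k^{\Delta t})-h(x_{k-1}^{\Delta t})=d(t_k)-d(t_{k-1})$. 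Applying the integral mean value theorem to $h$ along the segment $[x_{k-1}^{\Delta t},x_k^{\Delta t}]$ and invoking Lemma~\ref{lem:fun} for $\mathcal{J}$ on the compact set furnished by Assumption~\ref{assum1}, I would obtain $\|\mathcal{J}(x_k^{\Delta t})\,\Delta x\|\leqslant L_d \Delta t + L_1\|\Delta x\|^2$ for universal constants $L_d,L_1$ depending only on Lipschitz data.

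Next I would left-multiply the KKT equation by $\mathcal{J}(x_k^{\Delta t})$, invert $\mathcal{J}(x_k^{\Delta t})\mathcal{J}(x_k^{\Delta t})^\top$ (whose smallest eigenvalue is at least $c^2$ by Assumption~\ref{assum2}), and substitute in the preceding estimate to obtain a multiplier bound of the form $\|\lambda_k\|\leqslant C_0 + (C_1/\Delta t)\|\Delta x\|^2$. Plugging this back into the KKT equation and rearranging yields the quadratic inequality
\begin{equation*}
\|\Delta x\|\leqslant C_2\,\Delta t + C_3\,\|\Delta x\|^2,
\end{equation*}
where $C_2, C_3$ depend only on $\alpha$, $c$, and the Lipschitz/uniform bounds on $\nabla_x f$, $\mathcal{J}$, $d$ over the compact region guaranteed by Assumption~\ref{assum1}.

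The main obstacle is that this inequality by itself is consistent with a ``far'' root $\|\Delta x\|\gtrsim 1/C_3$ bounded away from zero, in addition to the desired ``near'' root $\|\Delta x\|=O(\Delta t)$. To exclude the far root, the plan is to reparametrize $y:=\Delta x/\Delta t$ and observe that the KKT system in the variables $(y,\lambda_k)$ converges, as $\Delta t\downarrow 0$, to the linear saddle-point system
\begin{equation*}
\alpha y + \nabla_x f(x_{k-1}^{\Delta t}, t_{k-1}) + \mathcal{J}(x_{k-1}^{\Delta t})^\top \lambda = 0, \qquad \mathcal{J}(x_{k-1}^{\Delta t})\,y = \dot d(t_{k-1}),
\end{equation*}
whose coefficient matrix is nonsingular under Assumption~\ref{assum2} and whose unique solution $(y_0,\lambda_0)$ is uniformly bounded over the compact set of admissible $x_{k-1}^{\Delta t}$. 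The implicit function theorem, applied uniformly by a compactness argument, then furnishes a threshold $\bar t>0$ below which any KKT selection of the AKKT tuple must remain within an $O(1)$ neighborhood of $(y_0,\lambda_0)$; this forces $\|\Delta x\|<1/(2C_3)$, ruling out the far root. Inserting this smallness back into the quadratic inequality gives $\|\Delta x\|\leqslant 2C_2\Delta t$, establishing the claim with $c:=2C_2$.
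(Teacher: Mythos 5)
Your first half is sound and is essentially the same computation the paper performs: combine the stationarity condition with the differenced feasibility equation, solve for the multiplier using the uniform lower bound on $\sigma_{\min}(\mathcal{J})$ from Assumption~\ref{assum2}, and conclude that $\Delta x/\Delta t$ equals a quantity that is bounded \emph{provided} $\|\Delta x\|$ is already known to be small. The problem is your mechanism for establishing that a priori smallness, i.e., for discarding the ``far root'' of $\|\Delta x\|\leqslant C_2\Delta t+C_3\|\Delta x\|^2$. The implicit function theorem applied to the rescaled KKT system gives existence and \emph{local} uniqueness of a solution branch $(y(\Delta t),\lambda(\Delta t))$ near the limiting saddle-point solution $(y_0,\lambda_0)$; it says nothing about KKT points of $\mathrm{OPT}(k,\Delta t,x_{k-1}^{\Delta t})$ that do not lie near that branch, and such points genuinely exist when $h$ is nonlinear. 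For instance, with a single spherical constraint $h(x)=\|x\|^2$, the near-antipode of $x_{k-1}^{\Delta t}$ has $\Delta x$ lying in the row space of $\mathcal{J}(x_k)$, so the term $\frac{\alpha}{\Delta t}\Delta x$ can be absorbed into $\mathcal{J}(x_k)^\top\lambda_k$ with $\|\lambda_k\|\sim 1/\Delta t$, and the point is a KKT point of the regularized problem whenever it is a KKT point of the unregularized one; it satisfies your quadratic inequality via the far root with $\|\Delta x\|=O(1)$. Since the definition of an AKKT tuple allows $x_k^{\Delta t}$ to be \emph{any} KKT point (the paper explicitly discusses such bifurcations), your compactness-plus-IFT step does not close the argument, and the statement you need to prove is quantified over all such selections.

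What actually excludes the far root is Assumption~\ref{assum11}/\ref{assum1}: the value of the \emph{regularized} objective \eqref{obj_reg} at $x_k^{\Delta t}$ is uniformly bounded above by $R_2$, and since $f$ is uniformly bounded below, the proximal term obeys $\frac{\alpha}{2\Delta t}\|\Delta x\|^2\leqslant R_2-M$, i.e., $\|\Delta x\|=O(\sqrt{\Delta t})$. This is exactly the paper's Lemma~\ref{l_app}, and it is the missing ingredient in your write-up: once you have it, $\|\Delta x\|<1/(2C_3)$ for all $\Delta t$ below a universal threshold, and your quadratic inequality (or, as in the paper, the explicit formula $\Delta x/\Delta t=g(\cdot)$ from \eqref{discrete} with $g$ bounded on a compact set) immediately upgrades the bound to $O(\Delta t)$. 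Replace the IFT step with this energy argument and your proof is correct.
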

{\begin{proof}
The proof is provided in the appendix.
\end{proof}}
\begin{lemma}\label{lem:cont}
	Given an initial feasible point $x_0$, there exist
	\begin{itemize}
		\item[1.] $\{s_n\}_{n=1}^{\infty}$ with $\lim\limits_{n\to\infty}s_n = 0$ such that each $s_n$ is endowed with an AKKT tuple $(x_0,s_n, \{x_k^{s_n}\}_{k=0}^{\infty})$, and
		\item[2.] a continuously differentiable and uniformly bounded function $\bar{x} : [0,T]\to\mathbb{R}^n$ that satisfies $\bar x(0) = x_0$,
	\end{itemize}
	with the following properties:
	\begin{subequations}
	\begin{align}
	& \lim\limits_{n\to\infty}\sup_{1\leq k\leq \frac{T}{s_n}}\left\|x_k^{s_n}-{\bar{x}}(k s_n)\right\| = 0,\\
	& \lim\limits_{n\to\infty}\sup_{1\leq k\leq \frac{T}{s_n}}\left\|\frac{x_k^{s_n}-x_{k-1}^{s_n}}{s_n}-\dot{\bar{x}}(ks_n)\right\| = 0.\label{limsupx}
	\end{align}
	\end{subequations}
	Moreover, there exists a universal constant $c>0$ such that $\sigma_{\min}(\mathcal{J}(\bar{x}(t)))\geq c$ for every $t\in[0,T]$.
\end{lemma}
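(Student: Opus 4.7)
The plan is to realize $\bar{x}$ as a uniform limit of piecewise-linear interpolants of discrete local trajectories, and then upgrade this limit to a continuously differentiable function. First, for each $s\in(0,\bar t\,]$, Assumption~\ref{assum1} furnishes an AKKT tuple $(x_0,s,\{x_k^s\}_{k=0}^{\lceil T/s\rceil})$, and Lemma~\ref{lem:cont2} yields a step bound $\|x_k^s - x_{k-1}^s\| \le c\,s$. I would define the piecewise linear interpolant $x^s:[0,T]\to\mathbb{R}^n$ with $x^s(ks) = x_k^s$ on grid points and linear extension in between. By Assumption~\ref{assum1} the family $\{x^s\}_{s\in(0,\bar t]}$ is uniformly bounded, and the step bound shows that each $x^s$ is $c$-Lipschitz, so the family is equicontinuous on $[0,T]$. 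The Arzel\`a--Ascoli theorem then produces a subsequence $s_n\to 0^+$ and a continuous $\bar{x}:[0,T]\to\mathbb{R}^n$ with $x^{s_n}\to\bar{x}$ uniformly. Since $x^s(0)=x_0$ for every $s$, we have $\bar{x}(0)=x_0$, and the pointwise convergence at grid points delivers the first convergence statement of the lemma.

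The harder part is to establish continuous differentiability of $\bar{x}$ together with uniform convergence of the discrete velocities. The approach is to leverage the KKT conditions of~\eqref{obj_reg}: at $x_k^s$ with multiplier $\lambda_k^s$, stationarity gives $\nabla_x f(x_k^s, t_k) + (\alpha/s)(x_k^s - x_{k-1}^s) + \mathcal{J}(x_k^s)^\top \lambda_k^s = 0$, while feasibility gives $h(x_k^s) = d(t_k)$ and $h(x_{k-1}^s) = d(t_{k-1})$. Applying the orthogonal projector $P(x_k^s) := I - \mathcal{J}(x_k^s)^\top(\mathcal{J}(x_k^s)\mathcal{J}(x_k^s)^\top)^{-1}\mathcal{J}(x_k^s)$ to the stationarity equation annihilates the multiplier term and yields $\alpha P(x_k^s)(x_k^s-x_{k-1}^s)/s = -\eta(x_k^s,t_k)$. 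In parallel, a mean-value expansion of $h(x_k^s)-h(x_{k-1}^s) = d(t_k)-d(t_{k-1})$ controls the component of the step in the range of $\mathcal{J}(x_k^s)^\top$, giving $\mathcal{J}(x_k^s)(x_k^s-x_{k-1}^s)/s = \dot{d}(t_k) + \rho_k^s$ with $\|\rho_k^s\|$ tending to zero uniformly in $k$. Combining these two relations yields
\[
\frac{x_k^s - x_{k-1}^s}{s} \;=\; -\frac{1}{\alpha}\,\eta(x_k^s,t_k) \;+\; \theta(x_k^s)\,\dot{d}(t_k) \;+\; r_k^s,
\]
with $\sup_k \|r_k^s\|\to 0$ as $s\to 0$. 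Assumption~\ref{assum2} gives a uniform lower bound on $\sigma_{\min}(\mathcal{J}(x_k^s))$, so $\eta$ and $\theta$ are Lipschitz on the compact set containing all iterates by Lemma~\ref{lem:fun}; the uniform convergence $x^{s_n}\to\bar{x}$ then implies that the right-hand side above, evaluated along the subsequence, converges uniformly to a continuous function $\Phi$ of $t$. It follows that the velocity sequence is Cauchy in the uniform norm, and a standard argument (uniform limit of difference quotients of a uniformly convergent sequence) identifies its limit with $\dot{\bar{x}} = \Phi$, which is therefore continuous on $[0,T]$.

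For the remaining assertion, Assumption~\ref{assum2} gives $\sigma_{\min}(\mathcal{J}(x_k^{s_n})) \ge c$ for every $k$ and $n$; since $\sigma_{\min}\circ \mathcal{J}$ is continuous and $x^{s_n}\to\bar{x}$ uniformly on the compact interval $[0,T]$, passing to the limit at each $t$ yields $\sigma_{\min}(\mathcal{J}(\bar{x}(t))) \ge c$ for every $t\in[0,T]$. Uniform boundedness of $\bar{x}$ is immediate from the uniform boundedness of the interpolants provided by Assumption~\ref{assum1}.

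The main obstacle I anticipate is the uniform convergence of the discrete velocities in \eqref{limsupx}. Pointwise convergence follows readily from the KKT decomposition above, but the uniform statement requires controlling the remainder $r_k^s$ uniformly in $k$. This in turn depends on making the Taylor remainders both in the feasibility expansion and in the multiplier elimination uniform, which will rest on (i)~the uniform lower bound on $\sigma_{\min}(\mathcal{J})$ from Assumption~\ref{assum2}, (ii)~the uniform boundedness of iterates from Assumption~\ref{assum1}, and (iii)~Lemma~\ref{lem:fun} applied to $f$, $h$, $\mathcal{J}$, and $d$ on the compact set in which all iterates live; together these furnish a single constant controlling every $o(s)$ term, so that the supremum over $k$ tends to zero with $s$.
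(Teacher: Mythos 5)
Your proposal is correct in substance but follows a genuinely different route from the paper. The paper keeps Lemma~\ref{lem:cont} purely compactness-based: it interpolates the discrete iterates with \emph{continuously differentiable splines}, applies the Arzel\`a--Ascoli theorem twice --- once to the interpolants and once to their derivatives --- and then invokes the classical theorem on uniform convergence of derivatives (Theorem 7.17 in Rudin) to conclude that the limit $\bar{x}$ is $C^1$ with $\dot{\bar x}$ equal to the limit of the derivatives; the identification of $\dot{\bar x}$ with the ODE right-hand side is deferred entirely to the later existence proof. You instead use piecewise-linear interpolants, a single application of Arzel\`a--Ascoli, and then obtain differentiability of $\bar x$ by front-loading the KKT analysis: projecting the stationarity condition to eliminate the multiplier and expanding feasibility by the mean value theorem to get $\frac{x_k^s-x_{k-1}^s}{s}=-\frac{1}{\alpha}\eta(x_k^s,t_k)+\theta(x_k^s)\dot d(t_k)+r_k^s$ with $\sup_k\|r_k^s\|\to 0$ --- which is exactly the relation the paper derives later (its equation \eqref{discrete}) in the existence argument. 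What your route buys is that you never need uniform boundedness and equicontinuity of the interpolants' \emph{derivatives} (the delicate step in the paper's second Arzel\`a--Ascoli application); what it costs is that you duplicate within the lemma the identification work the paper does afterwards, and your closing step (``the velocity sequence is Cauchy \dots a standard argument identifies its limit with $\dot{\bar x}=\Phi$'') should be made explicit, e.g.\ via the integral representation $\bar x(t)=x_0+\int_0^t\Phi(\tau)\,d\tau$ obtained from Riemann sums of the uniformly converging difference quotients, which simultaneously yields $\bar x\in C^1$ and the uniform statement \eqref{limsupx}. Two minor points: establish the bound $\sigma_{\min}(\mathcal{J}(\bar x(t)))\geq c$ (which indeed follows from uniform convergence at the dense grid points and continuity of $\sigma_{\min}\circ\mathcal{J}$) \emph{before} defining $\Phi$, since $\eta$ and $\theta$ need the Jacobian to be nonsingular along $\bar x$; and note that the uniform control of the mean-value points $\tilde x_{i,k}$ only requires the $O(\sqrt{\Delta t})$ step bound of Lemma~\ref{l_app} together with Lemma~\ref{lem:cont2}, so no circularity arises.
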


{\begin{proof}
The proof is provided in the appendix.
\end{proof}}

\begin{lemma}\label{lem:equiv}
	Consider two continuous functions $g_1:[0,T]\to\mathbb{R}^n$ and $g_2:[0,T]\to\mathbb{R}^n$. We have $g_1 = g_2$ if and only if
	\begin{align}
	\lim\limits_{\Delta t\to 0^+}\sup_{0\leq k\leq \lceil\frac{T}{\Delta t}\rceil}\|g_1(k\Delta t) - g_2(k\Delta t)\| = 0
	\end{align}
\end{lemma}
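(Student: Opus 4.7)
The forward direction is immediate: if $g_1 \equiv g_2$ on $[0,T]$, then for any $\Delta t >0$ and any index $k$ with $k \Delta t \in [0,T]$ we have $g_1(k\Delta t) - g_2(k\Delta t) = 0$, so the supremum (and hence the limit) is $0$. Because $g_1, g_2$ are continuous on $[0,T]$, they admit a continuous extension to any slight overshoot $k\Delta t \in (T, T+\Delta t)$ via the value at $T$, but this is not even needed since the supremum is over a finite set of zeros.

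For the reverse direction, I would argue by contradiction. Suppose the uniform grid condition holds but $g_1 \neq g_2$. Then there exists $t^\star \in [0,T]$ with $\delta := \|g_1(t^\star) - g_2(t^\star)\| > 0$. By continuity of $g_1$ and $g_2$, the map $t \mapsto \|g_1(t)-g_2(t)\|$ is continuous on $[0,T]$, so there is an $\varepsilon > 0$ such that
\begin{equation*}
\|g_1(t) - g_2(t)\| \geq \delta/2 \quad\text{for every } t \in [t^\star - \varepsilon, t^\star + \varepsilon] \cap [0,T].
\end{equation*}
Denote this (nonempty) interval by $U$; it has positive length regardless of whether $t^\star$ is interior or on the boundary of $[0,T]$.

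Now for any $\Delta t < \min(\varepsilon, T)$, the grid points $\{k \Delta t : 0 \leq k \leq \lceil T/\Delta t\rceil\}$ form a partition of $[0,T]$ (with possibly one overshoot) whose consecutive spacing is exactly $\Delta t < \varepsilon$, so at least one index $k^\star$ satisfies $k^\star \Delta t \in U$. For that index,
\begin{equation*}
\sup_{0 \leq k \leq \lceil T/\Delta t\rceil} \|g_1(k\Delta t) - g_2(k\Delta t)\| \geq \|g_1(k^\star \Delta t) - g_2(k^\star\Delta t)\| \geq \delta/2.
\end{equation*}
Letting $\Delta t \to 0^+$, the left-hand side stays bounded below by $\delta/2 > 0$, contradicting the assumed limit of $0$. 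Hence $g_1 = g_2$.

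The only subtlety is ensuring that, for small enough $\Delta t$, the grid actually hits the neighborhood $U$; this is why one only needs the spacing $\Delta t$ to be smaller than the length of $U$. If $t^\star$ happens to coincide with $T$, one uses the left half-neighborhood $[T - \varepsilon, T]$, which contains the grid point $\lceil T/\Delta t\rceil \Delta t$ or $(\lceil T/\Delta t\rceil - 1)\Delta t$. No deeper machinery is required, and I do not anticipate any real obstacle beyond this routine boundary bookkeeping.
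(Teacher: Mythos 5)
Your proof is correct, and it is exactly the routine density-plus-continuity argument that the paper alludes to when it omits the proof as ``straightforward'' and defers to standard references: the forward direction is trivial, and for the converse the grid $\{k\Delta t\}$ becomes dense enough to hit any interval on which $\|g_1-g_2\|$ stays bounded away from zero. Your handling of the minor boundary issue (the last index $\lceil T/\Delta t\rceil$ possibly overshooting $T$, and the case $t^\star=T$) is also fine, so nothing is missing.
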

{\begin{proof}
The proof is straightforward and can be found in standard references, e.g.,~\cite{rudin2006real}.
\end{proof}}

We now provide the proof for the existence and uniqueness of the solution for~\eqref{eq:ode}.

\vspace{2mm}
{\it Proof of existence and uniqueness:} Consider the sequence $\{s_n\}_{n=1}^{\infty}$ and its corresponding AKKT tuple $\left\{(x_0,s_n, \{x_k^{s_n}\}_{k=0}^{T/s_n})\right\}_{n=1}^\infty$ that is introduced in Lemma~\ref{lem:cont}. Due to Assumption~\ref{assum2}, the linear independence constraint qualification (LICQ) holds at $x_k^{s_n}$ for  $k=0,\dots,T/s_n$ and $n=1,\dots,\infty$. Therefore, for every $n$, there exists a sequence of Lagrangian vectors $\{\mu_k^{s_n}\}_{k=0}^{T/s_n}$ such that $\left(\{x_k^{s_n}\}_{k=0}^{T/s_n},\{\mu_k^{s_n}\}_{k=0}^{T/s_n}\right)$ satisfies the KKT conditions:
\begin{align}
&\nabla_x f_k(x_k^{s_n})+\mathcal{J}(x_k^{s_n})^\top\mu_k^{s_n}+\frac{\alpha}{s_n}(x_k^{s_n}-x_{k-1}^{s_n}) = 0\tag{Stationarity}\\
& h_i(x_k^{s_n}) = d_{i,k}\tag{feasibility}
\end{align}
for  $k = 1,\dots,T/s_n$, where $f_k(x_k^{s_n}) = f(x_k^{s_n}, ks_n)$ and $d_{i,k} = d_i(ks_n)$. {The feasibility condition implies that for every $i$, we have}
\begin{align}
&\frac{1}{s_n}\left(h_i(x_k^{s_n})-h_i(x_{k-1}^{s_n})\right) = \frac{d_{i,k}-d_{i,k-1}}{s_n}\nonumber\\
\implies & \nabla h_i(\tilde{x}^{s_n}_{i,k})^\top\left(\frac{x^{s_n}_k-x_{k-1}^{s_n}}{s_n}\right) =  \frac{d_{i,k}-d_{i,k-1}}{s_n}
\end{align}
for some $\tilde{x}_{i,k}^{s_n} = (1-\alpha_i)x_k^{s_n}+\alpha_i x_{k-1}^{s_n}$ with $\alpha_i\in[0,1]$, where the last implication is due to the differentiability of $h_i(x)$ and the Mean Value Theorem. For simplicity and with a slight abuse of notation, define 
\begin{align}
\mathcal{J}(\{\tilde{x}_{i,k}^{s_n}\}_{i=1}^r) = \begin{bmatrix}
\nabla h_1(\tilde{x}_{1;k}^{s_n})^\top\\
\vdots\\
\nabla h_r(\tilde{x}_{r;k}^{s_n})^\top
\end{bmatrix},\quad d_k = \begin{bmatrix}
d_{1,k}\\
\vdots\\
d_{r,k}
\end{bmatrix}
\end{align}
This implies that
\begin{align}\label{jacob}
\mathcal{J}(\{\tilde{x}_{i,k}^{s_n}\}_{i=1}^r) \left(\frac{x_k^{s_n}-x_{k-1}^{s_n}}{s_n}\right) = \frac{d_{k}-d_{k-1}}{s_n}
\end{align}
Combining this equality with the stationarity condition leads~to
\begin{align}
\mathcal{J}(\{\tilde{x}_{i,k}^{s_n}\}_{i=1}^r)\nabla_x f_k(x_k^{s_n})+\mathcal{J}(\{\tilde{x}_{i,k}^{s_n}\}_{i=1}^r)\mathcal{J}(x_k^{s_n})^\top\lambda_k^{s_n}
+\alpha \left(\frac{d_{k}-d_{k-1}}{s_n}\right) = 0
\end{align}
Now, note that, due to Assumption~\ref{assum2}, $\sigma_{\min}(\mathcal{J}(x_k^{s_n}))\geq c$ for some universal constant $c>0$. Therefore, for every $y$ sufficiently close to $x_k^{s_n}$, $\mathcal{J}(y)$ remains full-row rank. {Together with the definition of $\{\tilde{x}_{i,k}^{s_n}\}_{i=1}^r$ and Lemma~\ref{l_app} in the appendix}, this implies that $\mathcal{J}(\{\tilde{x}_{i,k}^{s_n}\}_{i=1}^r)\mathcal{J}(x_k)^\top$ is invertible for sufficiently small $\Delta t$. Therefore,
\begin{align}
\lambda_k^{s_n} =& - \left(\mathcal{J}(\{\tilde{x}_{i,k}^{s_n}\}_{i=1}^r)\mathcal{J}(x_k^{s_n})^\top\right)^{-1}\times\left(\mathcal{J}(\{\tilde{x}_{i,k}^{s_n}\}_{i=1}^r)\nabla_x f_k(x_k^{s_n})+\alpha \left(\frac{d_{i;k}-d_{i;k-1}}{s_n}\right)\right).\nonumber
\end{align}
Substituting this into the stationarity condition and performing the necessary simplifications lead to
\begin{align}\label{discrete}
\frac{x_k^{s_n}\!-\!x_{k-1}^{s_n}}{s_n} \!=& -\frac{1}{\alpha}\Bigg(I\!-\!\mathcal{J}(x_k^{s_n})^\top\times\left(\mathcal{J}(\{\tilde{x}_{i,k}^{s_n}\}_{i=1}^r)\mathcal{J}(x_k^{s_n})^\top\right)^{-1}\mathcal{J}(\{\tilde{x}_{i,k}^{s_n}\}_{i=1}^r)\Bigg)\nabla_x f_k(x_k^{s_n})\nonumber\\
&+\mathcal{J}(x_k^{s_n})^\top\left(\mathcal{J}(\{\tilde{x}_{i,k}^{s_n}\}_{i=1}^r)\mathcal{J}(x_k^{s_n})^\top\right)^{-1}\left(\frac{d_{k}-d_{k-1}}{s_n}\right)\nonumber\\
 := &g\left(\{\tilde{x}_{i,k}^{s_n}\}_{i=1}^r,x_k^{s_n},\left(\frac{d_{k}-d_{k-1}}{s_n}\right)\right)
\end{align}
Consider the continuously differentiable function $\bar x(t)$ that is introduced in Lemma~\ref{lem:cont}. The above equality together with~\eqref{limsupx} implies that
\begin{align}\label{ineq_limsup}
\lim\limits_{n\to\infty}\sup_{0\leq k\leq \lceil\frac{T}{s_n}\rceil}&\Bigg\|\dot {\bar{x}}(ks_n) 
- g\Big(\{\tilde{x}_{i,k}^{s_n}\}_{i=1}^r,x_k^{s_n},\Big(\frac{d_{k}-d_{k-1}}{s_n}\Big)\Big)\Bigg\| = 0
\end{align}
Therefore, one can write
\begin{align}\label{ineq_main}
&\lim\limits_{n\to\infty}\sup_{0\leq k\leq \lceil\frac{T}{s_n}\rceil}\Bigg\|\dot {\bar{x}}(ks_n)- g\left(\{\bar x(ks_n)\}_{i=1}^r,\bar x(ks_n),\dot d(ks_n\right)\Bigg\|\nonumber\\
\leq &\lim\limits_{n\to\infty}\sup_{0\leq k\leq \lceil\frac{T}{s_n}\rceil}\Bigg\|\dot{\bar{x}}(ks_n) - g\left(\{\tilde{x}_{i,k}^{s_n}\}_{i=1}^r,x^{s_n}_k,\left(\frac{d_{k}-d_{k-1}}{s_n}\right)\right)\Bigg\|\nonumber\\
& + \lim\limits_{n\to\infty}\sup_{0\leq k\leq \lceil\frac{T}{s_n}\rceil}\Bigg\|g\left(\{\bar x(ks_n)\}_{i=1}^r,\bar x(ks_n),\dot d(ks_n)\right) - g\left(\{\tilde{x}_{i,k}^{s_n}\}_{i=1}^r,x_k^{s_n},\left(\frac{d_{k}-d_{k-1}}{s_n}\right)\right)\Bigg\|
\end{align}
We present the following lemma.
\begin{lemma}\label{lem:lip}
	Given $(\{\bar x_i\}_{i=1}^r,\bar y,\bar z)$ with $ \left(\sum_{i=1}^{r}\|\bar x_i\|\right)+\|\bar y\|+\|\bar{z}\|\leq c_1$ for some $c_1>0$, suppose that\\
	 { $\sigma_{\min}\left(\mathcal{J}(\{\bar x_i\}_{i=1}^r)\mathcal{J}(\bar y)^\top\right)\geq c_2$ for some $c_2>0$.} Then, there exist universal constants $L,r>0$, such that $g(\{\bar x_i\}_{i=1}^r,\bar y,\bar z)$ is locally $L$-Lipschitz continuous in a ball $\mathcal{B} = \{(\{ x_i\}_{i=1}^r, y, z)\ |\ \left(\sum_{i=1}^{r}\|\bar x_i- x_i\|\right)+\|\bar y - y\|+\|\bar{z}-z\|\leq r \}$. 
\end{lemma}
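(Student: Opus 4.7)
The plan is to realize $g$ as a finite composition and product of simpler mappings, each of which is locally Lipschitz on a neighborhood of $(\{\bar x_i\}_{i=1}^r,\bar y,\bar z)$, and then assemble a Lipschitz estimate for $g$ via the usual calculus of Lipschitz functions. The building blocks appearing in the definition of $g$ in \eqref{discrete} are: (i) the Jacobian map $x\mapsto \mathcal{J}(x)$; (ii) the gradient map $x\mapsto \nabla_x f(x,t)$; (iii) the matrix inverse $M\mapsto M^{-1}$; and (iv) bilinear operations (matrix–matrix and matrix–vector products). Because the $h_i$ are twice continuously differentiable and $\nabla_x f(\cdot,t)$ is continuously differentiable, Lemma~\ref{lem:fun} already supplies Lipschitz constants for (i) and (ii) on any bounded set, with constants depending only on the radius of that set. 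Products and sums of bounded, Lipschitz maps remain Lipschitz, so the only truly delicate ingredient is the inverse in (iii), whose domain must be kept bounded away from the singular matrices.

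To handle the inverse, I would first shrink the radius of the ball $\mathcal{B}$ so that, on $\mathcal{B}$, the matrix $M:=\mathcal{J}(\{\tilde x_i\}_{i=1}^r)\mathcal{J}(y)^\top$ stays close to the base matrix $\bar M:=\mathcal{J}(\{\bar x_i\}_{i=1}^r)\mathcal{J}(\bar y)^\top$. Since each $\mathcal{J}$ is Lipschitz on the bounded neighborhood implied by $\sum_i\|\bar x_i\|+\|\bar y\|+\|\bar z\|\le c_1$ (via Lemma~\ref{lem:fun}) and $\sigma_{\min}$ is $1$-Lipschitz with respect to the spectral norm, I can choose the radius small enough, depending only on $c_1,c_2$ and the local moduli of the $\mathcal{J}$'s, to ensure that $\sigma_{\min}(M)\ge c_2/2$ everywhere on $\mathcal{B}$. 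The resolvent identity $M^{-1}-\bar M^{-1}=-M^{-1}(M-\bar M)\bar M^{-1}$ then yields a bound of the form $\|M^{-1}-\bar M^{-1}\|\le (2/c_2)^2\,\|M-\bar M\|$, so $M\mapsto M^{-1}$ is Lipschitz on $\mathcal{B}$ with a constant depending only on $c_2$, and the inverse factor is also uniformly bounded on $\mathcal{B}$ by $2/c_2$.

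With all four ingredients Lipschitz and uniformly bounded on $\mathcal{B}$, I would combine them through the product rule $\|AB-A'B'\|\le \|A\|\,\|B-B'\|+\|A-A'\|\,\|B'\|$ applied repeatedly, together with additivity, to conclude that $g(\{x_i\}_{i=1}^r,y,z)$ is Lipschitz on $\mathcal{B}$ with a constant $L$ that depends only on $c_1$, $c_2$, and the global smoothness data of $f$ and the $h_i$ on the bounded set $\{\sum_i\|x_i\|+\|y\|+\|z\|\le c_1+1\}$ (say). Lipschitz dependence on $z$ is automatic because $z$ enters linearly in \eqref{discrete}, with prefactor bounded by $\|\mathcal{J}(y)^\top\|\cdot\|M^{-1}\|$, which is uniformly controlled on $\mathcal{B}$.

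The main obstacle is essentially bookkeeping: ensuring that every constant introduced along the way (the bound on $\|\mathcal{J}\|$, the bound on $\|\nabla_x f\|$, the Lipschitz moduli from Lemma~\ref{lem:fun}, and the bound $2/c_2$ on the inverse) can be expressed as a universal function of $c_1$ and $c_2$ alone, so that the final constants $L$ and $r$ are truly universal in the sense required by the lemma. Beyond the standard perturbation bound for matrix inverses, no conceptual difficulty is anticipated; the argument is essentially a careful application of Lemma~\ref{lem:fun} piece by piece.
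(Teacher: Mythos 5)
Your proposal is correct and follows essentially the same route as the paper: the paper's own (very terse) proof likewise shrinks the radius so that $\sigma_{\min}\bigl(\mathcal{J}(\{x_i\}_{i=1}^r)\mathcal{J}(y)^\top\bigr)\geq c_2/2$ on the ball via Lemma~\ref{lem:fun}, and then appeals to the structure of $g$ in \eqref{discrete}, omitting the remaining details. Your resolvent-identity bound for the inverse and the product-rule bookkeeping are exactly the standard steps the paper leaves out "for brevity."
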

\begin{proof}
	Due to the continuous differentiability of $\mathcal{J}(x)$ and Lemma~\ref{lem:fun}, it is easy to see that $r$ can be chosen such that $\sigma_{\min}\left(\mathcal{J}(\{x_i\}_{i=1}^r)\mathcal{J}(y)^\top\right)\geq c_2/2$ for every $(\{ x_i\}_{i=1}^r, y, z)\in\mathcal{B}(r)$. This observation, together with the definition of $g(\cdot,\cdot,\cdot)$ in~\eqref{discrete}, can be used to complete the proof. The details are omitted for brevity.
\end{proof}

{According to Lemma~\ref{lem:lip}, the function $g(\cdot,\cdot,\cdot)$ is locally Lipschitz continuous} on a ball with nonzero radius and centered at $\left(\{\tilde{x}_{i,k}^{s_n}\}_{i=1}^r,x_k^{s_n},\left(\frac{d_{k}-d_{k-1}}{s_n}\right)\right)$ for every $0\leq k\leq \lceil\frac{T}{\Delta t}\rceil$ and $n = 1,\dots,\infty$. This together with the definition of $\{\tilde{x}_{i,k}^{s_n}\}_{i=1}^r$, the differentiability of $d(t)$, and Lemma~\ref{lem:cont} implies that for sufficiently large $n$ (or, equivalently, for sufficiently small $s_n$), there exists a {Lipschitz constant $L$} such that
\begin{align}\label{ineq1}
&\Bigg\|g\left(\{\bar x(ks_n)\}_{i=1}^r,\bar x(ks_n),\dot d(ks_n)\right) - g\left(\{\tilde{x}_{i,k}^{s_n}\}_{i=1}^r,x_k^{s_n},\left(\frac{d_{k}-d_{k-1}}{s_n}\right)\right)\Bigg\|\nonumber\\
\leq& L\Bigg(\sum_{i=1}^r\|\bar x(ks_n)-\tilde{x}_{i,k}^{s_n}\|+\|\bar x(ks_n)- x_k^{s_n}\|+\left\|\dot d(ks_n)-\left(\frac{d_{k}-d_{k-1}}{s_n}\right)\right\|\Bigg)\nonumber\\
\leq &L\Bigg((r+1)\|\bar x(ks_n)- x_k^{s_n}\|+r\|\bar x((k-1)s_n)- x_{k-1}^{s_n}\|+r\|\bar x(ks_n)- \bar x((k-1)s_n)\|+\left\|\dot d(ks_n)-\left(\frac{d_{k}-d_{k-1}}{s_n}\right)\right\|\Bigg)\nonumber
\end{align}
{where we used the definition of $\{\tilde x_{i,k}^{s_n}\}_{i=1}^r$ and triangle inequality. According to Lemmas~\ref{lem:cont2} and~\ref{lem:cont}, the right-hand side of~\eqref{ineq1} converges to zero as $n\to\infty$. Therefore, combining~\eqref{ineq1} and~\eqref{ineq_limsup} with~\eqref{ineq_main} implies that} 
\begin{align}
&\lim\limits_{n\to\infty}\sup_{0\leq k\leq \lceil\frac{T}{s_n}\rceil}\Bigg\|\dot{\bar x}(ks_n) - g\left(\{\bar x(ks_n)\}_{i=1}^r,\bar x(ks_n),\dot d(ks_n\right)\Bigg\| = 0\nonumber
\end{align}
Furthermore, due to Lemma~\ref{lem:cont}, $\mathcal{J}(\bar x(t))$ is full-row rank at every $t\in[0,T]$ and therefore, $g(\{\bar x(t)\}_{i=1}^r, \bar x(t),\dot d(t))$ is continuous as a function of $t$ in $[0,T]$. Invoking Lemma~\ref{lem:equiv} then leads to
\begin{align}
\dot{\bar{x}}(t) = g(\{\bar x(t)\}_{i=1}^r, \bar x(t),\dot d(t))
\end{align}
at every $t\in[0,T]$. This shows that $\bar x:[0,T]\to\mathbb{R}^n$ is a solution to~\eqref{eq:ode}. Finally, due to Lemma~\ref{lem:cont}, we have $\sigma_{\min}(\mathcal{J}(\bar{x}(t)))\geq c$ for a universal constant $c>0$. Therefore, Lemma~\ref{lem:lip} can be used to verify the existence of an open and connected set $\mathcal{D}$ such that $g(\cdot,\cdot,\cdot)$ is locally $L$-Lipschitz continuous on $\mathcal{D}$ and $(\bar{x}(t),t)\in\mathcal{D}$ for every $t\in[0,T]$. Therefore, Theorem 2.2 in~\cite{coddington1955theory} can be used to show that $\bar{x}:[0,T]\to\mathbb{R}^n$ is the unique solution to~\eqref{eq:ode}. \qed

\subsection{Proof of Convergence} 

Next, we show the validity of the second statement in Theorem~\ref{thm:ode}.

\begin{lemma}[Backward Euler Iterations]\label{lem:back_euler}
	There exists a universal constant $\bar{t}$ such that for every $\Delta t\leq \bar{t}$, {there exists a sequence $\{y_k^{\Delta t}\}_{k=0}^{\lceil T/\Delta t\rceil}$ that satisfies the following statements:}
	\begin{itemize}
		\item[-] We have $y_0^{\Delta t} = x_0$ and
		{\begin{align}\label{back_euler}
		y_{k}^{\Delta t} = y_{k-1}^{\Delta t}+\Delta t\cdot g\left(\{y_{k}^{\Delta t}\}_{i=1}^r,y_k^{\Delta t},\dot{d}(s_k)\right)
		\end{align}}
		for  $k = 1,\dots,\lceil T/\Delta t\rceil$. 
		\item[-] There exists a universal constant $c_2>0$ such that $\|y_k^{\Delta t}-y_{k-1}^{\Delta t}\|\leq c_2\Delta t$ for  $k = 1,\dots,\lceil T/\Delta t\rceil$.
		\item[-] We have
		\begin{align}\label{backward}
		\lim_{\Delta t\to 0^+}\sup_{0\leq k\leq \lceil T/\Delta t\rceil}\|y_k^{\Delta t}-x(s_k)\| = 0
		\end{align}
		where $x:[0,T]\to\mathbb{R}^n$ is the unique solution to~\eqref{eq:ode}.
		\item[-] We have $\sigma_{\min}(\mathcal{J}(y_k^{\Delta t}))\geq c_1$ for some universal $c_1$ and every $k = 1,\dots,\lceil T/\Delta t\rceil$.
	\end{itemize}
\end{lemma}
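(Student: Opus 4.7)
The overall plan is to construct the iterates $\{y_k^{\Delta t}\}$ inductively in $k$, using the local Lipschitz property of the reduced right-hand side $g$ from Lemma~\ref{lem:lip} to define each $y_k^{\Delta t}$ as a fixed point of the implicit backward Euler map, while simultaneously tracking its closeness to the unique ODE solution $x:[0,T]\to\mathbb{R}^n$ provided by the existence-and-uniqueness portion of Theorem~\ref{thm:ode}. The induction step produces all four claimed properties at once.

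For the existence of each iterate, fix $k$ and consider the map $\Phi_k(y) := y_{k-1}^{\Delta t} + \Delta t \cdot g(\{y\}_{i=1}^r, y, \dot{d}(s_k))$. By Lemma~\ref{lem:cont} the true trajectory satisfies $\sigma_{\min}(\mathcal{J}(x(s_k)))\geq c>0$, so Lemma~\ref{lem:lip} furnishes a fixed radius $r>0$ and Lipschitz constant $L$ for $g$ on a neighborhood of $(\{x(s_k)\}_{i=1}^r, x(s_k),\dot{d}(s_k))$, together with a uniform bound $\|g\|\leq M$ there. Provided $y_{k-1}^{\Delta t}$ is already $O(\Delta t)$-close to $x(s_k)$, one can check that $\Phi_k$ maps a closed ball of radius $2M\Delta t$ around $x(s_k)$ into itself and is a contraction with constant $(r+1)L\Delta t<1$ once $\Delta t$ is below some universal $\bar{t}$. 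The Banach fixed-point theorem then yields a unique $y_k^{\Delta t}$, and the bound $\|y_k^{\Delta t}-y_{k-1}^{\Delta t}\|\leq c_2\Delta t$ follows immediately from the defining equation with $c_2 := M$.

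The main obstacle is the uniform convergence estimate \eqref{backward}, which I would handle by a standard consistency-plus-stability argument. For consistency, the ODE $\dot{x}(t)=g(\{x(t)\}_{i=1}^r,x(t),\dot{d}(t))$ is continuously differentiable on $[0,T]$, so a Taylor expansion together with uniform continuity of $\dot{x}$ gives a local truncation error $\tau_k := x(s_k)-x(s_{k-1})-\Delta t\cdot g(\{x(s_k)\}_{i=1}^r,x(s_k),\dot{d}(s_k)) = o(\Delta t)$ uniformly in $k$. For stability, subtracting the backward Euler relation from this identity and invoking the Lipschitz bound for $g$ yields, with $e_k := x(s_k)-y_k^{\Delta t}$,
\begin{equation*}
\|e_k\| \;\leq\; \|e_{k-1}\| \;+\; (r+1)L\Delta t\,\|e_k\| \;+\; \|\tau_k\|,
\end{equation*}
which after rearrangement gives $\|e_k\|\leq (1-(r+1)L\Delta t)^{-1}(\|e_{k-1}\|+\|\tau_k\|)$. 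A discrete Gronwall argument over at most $\lceil T/\Delta t\rceil$ steps then yields $\sup_{0\leq k\leq \lceil T/\Delta t\rceil}\|e_k\|\to 0$ as $\Delta t\to 0^+$. The delicate point is that the Lipschitz constant $L$ is only valid as long as $y_k^{\Delta t}$ stays in the neighborhood of $x(s_k)$ supplied by Lemma~\ref{lem:lip}; the induction must therefore maintain both existence of $y_k^{\Delta t}$ and closeness $\|e_k\|=O(\Delta t)$ at every step, which forces $\bar{t}$ to be chosen small enough that the accumulating $o(\Delta t)$ error never pushes an iterate beyond the admissible ball.

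The final claim $\sigma_{\min}(\mathcal{J}(y_k^{\Delta t}))\geq c_1$ follows by a perturbation argument. Since $\sigma_{\min}(\mathcal{J}(x(t)))\geq c$ for all $t\in[0,T]$ by Lemma~\ref{lem:cont}, since $\mathcal{J}$ is continuous, and since $\sup_k\|y_k^{\Delta t}-x(s_k)\|\to 0$ by the previous step, Weyl's perturbation inequality for singular values gives $\sigma_{\min}(\mathcal{J}(y_k^{\Delta t}))\geq c/2$ once $\bar{t}$ is further shrunk; one may then set $c_1 := c/2$, completing the proof.
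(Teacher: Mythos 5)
Your proposal is correct, and it follows the same overall strategy as the paper: treat \eqref{back_euler} as the implicit backward Euler discretization of \eqref{eq:ode} and obtain the first three bullets from backward-Euler convergence theory, then get the singular-value bound on $\mathcal{J}(y_k^{\Delta t})$ by combining $\sigma_{\min}(\mathcal{J}(x(t)))\geq c$ along the exact solution with the uniform convergence \eqref{backward} and continuity of $\mathcal{J}$. The difference is one of completeness rather than of route: the paper disposes of the first three statements by citing the classical convergence results for backward Euler and proves only the last bullet (via Lemma~\ref{lem:fun} in place of your Weyl-type perturbation bound, which is an equivalent continuity argument), whereas you reprove the classical part from scratch --- Banach fixed point for the implicit step using the local Lipschitz constant from Lemma~\ref{lem:lip}, then consistency plus a discrete Gronwall stability estimate. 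That extra detail is valuable, since the local (rather than global) Lipschitzness of $g$ is exactly the reason the citation alone is not entirely routine; your induction that keeps the iterates inside the neighborhood where Lemma~\ref{lem:lip} applies is the right way to patch this. One small quantitative slip: with $\dot{x}$ merely continuous, the truncation error is $o(\Delta t)$ and the accumulated error after Gronwall is $o(1)$, not $O(\Delta t)$, so you cannot maintain ``$y_{k-1}^{\Delta t}$ is $O(\Delta t)$-close to $x(s_k)$'' inductively, and the fixed-point ball should be centered at $y_{k-1}^{\Delta t}$ rather than at $x(s_k)$. This is harmless: all that the contraction and the Lipschitz estimates require is that the iterates remain within the fixed radius furnished by Lemma~\ref{lem:lip} around the trajectory, which the $o(1)$ bound guarantees once $\bar{t}$ is chosen small enough, and the final statements \eqref{backward} and $\|y_k^{\Delta t}-y_{k-1}^{\Delta t}\|\leq c_2\Delta t$ only need this weaker closeness.
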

\begin{proof}
	Note that~\eqref{back_euler} is the backward Euler iterations for~\eqref{eq:ode}~\cite{rudin1964principles}. Furthermore, we have already shown the existence of a continuously differentiable and uniformly bounded solution to~\eqref{eq:ode}. The proof of the first three statements is immediately followed by the classical results on convergence of the backward Euler method; see~\cite{rudin1964principles} for more details. To verify the correctness of the last statement, note that we have shown in the previous subsection that the function $\bar{x}:[0,T]\to\mathbb{R}^n$ introduced in Lemma~\ref{lem:cont} is indeed the unique solution to the proposed ODE and we have $\mathcal{J}(\bar{x}(t))\geq c$ for some universal $c>0$ and every $t\in[0,T]$. This together with~\eqref{backward} and Lemma~\ref{lem:fun} concludes the proof.
\end{proof}

{\it Proof of convergence:} The main idea behind the proof is to show
that, given any AKKT tuple $\left(x_0,\Delta t,\{x_k^{\Delta t}\}_{k=1}^{\lceil T/\Delta t\rceil}\right)$, we have
\begin{align}
\lim_{\Delta t\to 0^+}\sup_{0\leq k\leq \lceil T/\Delta t\rceil}\|y_k^{\Delta t}-x_k^{\Delta t}\| = 0
\end{align}
Establishing this equality together with Lemma~\ref{lem:back_euler} is enough to complete the proof. 

It is evident from~\eqref{discrete} that the AKKT tuple $\left(x_0,\Delta t,\{x_k^{\Delta t}\}_{k=1}^{\lceil T/\Delta t\rceil}\right)$ should satisfy
\begin{align}
x_k^{\Delta t} = x_{k-1}^{\Delta t}+\Delta tg\left(\{\tilde{x}_{i,k}^{\Delta t}\}_{i=1}^r,x_k^{\Delta t},\left(\frac{d_{k}-d_{k-1}}{\Delta t}\right)\right)
\end{align} 
where $\tilde{x}_{i;k}^{t_n} = (1-\alpha_i)x_k^{t_n}+\alpha_i x_{k-1}^{t_n}$ with $\alpha_i\in[0,1]$ for  $i = 1,\dots, n$. Combined with the first statement of Lemma~\ref{lem:back_euler}, this implies that
\begin{align}\label{error}
x_k^{\Delta t} \!-\! y_k^{\Delta t} =& x_{k-1}^{\Delta t} \!-\! y_{k-1}^{\Delta t}+ \Delta t\Bigg(g\left(\{\tilde{x}_{i,k}^{\Delta t}\}_{i=1}^r,x_k^{\Delta t},\left(\frac{d_{k}-d_{k-1}}{\Delta t}\right)\right)-g\left(\{y_{k}^{\Delta t}\}_{i=1}^r,y_k^{\Delta t},\dot{d}(s_k)\right)\Bigg)\\
 =& x_{k-1}^{\Delta t} \!-\! y_{k-1}^{\Delta t}+A+B \nonumber
\end{align}
where
\begin{subequations}
\begin{align}
A=&\Delta t\times\!\Bigg(g\left(\{\tilde{x}_{i,k}^{\Delta t}\}_{i=1}^r,x_k^{\Delta t},\left(\frac{d_{k}\!\!-\!d_{k-1}}{\Delta t}\right)\right)-\!g\left(\{y_{k-1}^{\Delta t}\}_{i=1}^r,y_{k-1}^{\Delta t},\dot{d}(s_k)\right)\Bigg),\\
B=&\Delta t\times \!\Bigg(g\left(\{y_{k-1}^{\Delta t}\}_{i=1}^r,y_{k-1}^{\Delta t},\dot{d}(s_k)\right)-g\left(\{y_{k}^{\Delta t}\}_{i=1}^r,y_{k}^{\Delta t},\dot{d}(s_k)\right)\Bigg).
\end{align}
\end{subequations}

Define $E_k = \|x_k^{\Delta t} \!-\! y_k^{\Delta t}\|$ as the error at time-step $k$. Note that, due to the Lemmas~\ref{lem:cont},~\ref{lem:back_euler}, and~\ref{lem:lip}, as well as the construction of $\{\tilde{x}_{i,k}^{\Delta t}\}_{i=1}^r$, there exist universal constants $L,\bar c,\bar{t}>0$ such that, {for every $\Delta t\leq\bar{t}$,} $g(\cdot,\cdot,\cdot)$ is locally $L$-Lipschitz continuous in the balls
\begin{align}\label{ball1}
&\mathcal{B}_1 = \Bigg\{(\{ x_i\}_{i=1}^r, y, z)\ \Bigg|\ \left(\sum_{i=1}^{r}\|\tilde{x}_{i,k}^{\Delta t}- x_i\|\right)+\|\tilde{x}_{k}^{\Delta t} - y\|+\left\|\left(\frac{d_{k}-d_{k-1}}{\Delta t}\right)-z\right\|\leq \bar{c} \Bigg\},
\end{align} 
and 
\begin{equation}
\label{ball2}
\begin{aligned}
&\mathcal{B}_2 = \Bigg\{(\{ x_i\}_{i=1}^r, y, z)\  \Bigg|\ \left(\sum_{i=1}^{r}\|y_k^{\Delta t}- x_i\|\right)+\|y_k^{\Delta t} - y\|+\left\|\dot{d}(s_k)-z\right\|\leq \bar{c} \Bigg\}.
\end{aligned}
\end{equation}
To simplify the notation, we denote $\left\|\left(\frac{d_k-d_{k-1}}{\Delta t}\right)-\dot{d}(s_k)\right\|$ as $D$. The following chain of inequalities will be useful in bounding the expression $A$ in~\eqref{error}:
\begin{align}\label{recur1}
&\left(\sum_{i=1}^r\left\|\tilde{x}_{i,k}^{\Delta t}-y_{k-1}^{\Delta t}\right\|\right)+\left\|x_k^{\Delta t}-y_{k-1}^{\Delta t}\right\|+D\nonumber\\
& \leq r\left\|x_{k-1}^{\Delta t}-y_{k-1}^{\Delta t}\right\| + (r+1)\left\|x_k^{\Delta t}-y_{k-1}^{\Delta t}\right\|+D\nonumber\\
&\leq r\left\|x_{k-1}^{\Delta t}-y_{k-1}^{\Delta t}\right\| + (r+1)\left\|x_k^{\Delta t}-x_{k-1}^{\Delta t}\right\|\nonumber\\
&\hspace{2mm}+(r+1)\left\|x_{k-1}^{\Delta t}-y_{k-1}^{\Delta t}\right\|+D\nonumber\\
& = (2r+1) E_{k-1}+(r+1)\left\|x_k^{\Delta t}-x_{k-1}^{\Delta t}\right\|\nonumber+D\nonumber\\
&\leq (2r+1) E_{k-1}+(r+1)c_1\Delta t + c_2\Delta t^2\nonumber\\
&\leq (2r+1) E_{k-1}+((r+1)c_1+c_2)\Delta t
\end{align}
provided that $\Delta t\leq \bar{t}_1$, where  $\bar{t}_1,c_1,c_2>0$ are constants. Note that the last two inequalities are due to Lemma~\ref{lem:cont2} and the twice differentiability of $d(t)$. 

Subsequently, the next inequality will be used to bound the expression $B$ in~\eqref{error}. In particular, Lemma~\ref{lem:back_euler} can be used to show the existence of constants $c_3,\bar{t}_2>0$ such that
\begin{align}\label{recur2}
(r+1)\|y_{k-1}^{\Delta t}-y_{k}^{\Delta t}\|\leq c_3\Delta t
\end{align}
provided that $\Delta t\leq \bar{t}_2$. Given the inequalities~\eqref{recur1} and~\eqref{recur2}, we prove the validity of~\eqref{converge1} by proving the following statements:
\begin{itemize}
	\item[1.] There exists a universal constant $\bar{t}_3$ such that for every $\Delta t\leq\bar{t}_3$ and $k = 0,\dots, T/\Delta t$,~\eqref{recur1} and~\eqref{recur2} will be upper bounded by $\bar{c}$ which is defined as the radius of the balls~\eqref{ball1} and~\eqref{ball2}. This together with the locally $L$-Lipschitz continuity of $g(\cdot,\cdot,\cdot)$ within the balls $\mathcal{B}_1$ and $\mathcal{B}_2$ leads to
	\begin{subequations}
		\begin{align}
		\|A\|&\leq (2r+1)L\Delta tE_{k-1}+((r+1)c_1+c_2)L\Delta t^2\\
		\|B\|&\leq  c_3L\Delta t^2
		\end{align}
	\end{subequations}
	Combining these inequalities with~\eqref{error} results in the following recursive inequality:
	\begin{align}\label{eq2}
	E_k \leq &(1+(2r+1)L\Delta t)E_{k-1} + \left((r+1)c_1+c_2+c_3\right)L\Delta t^2
	\end{align}
	\item[2.] We have $\lim_{\Delta t\to0^+}\sup_{0\leq k\leq T/\Delta t}E_k = 0$.
\end{itemize}
We prove the first statement using an inductive argument on $k$. In particular, we show that if the following inequality holds 
\begin{align}\label{Delta}
\Delta t&\leq \min\Bigg\{\bar{t}_1,\bar{t}_2,\sqrt{\frac{\bar{c}}{{\left((r+1)c_1+c_2+c_3\right)L}}},\frac{(2r+1)\bar{c}}{((r+1)c_1+c_2+c_3)(e^{(2r+1)TL}-1)}\Bigg\}\nonumber\\
&=\bar{t}_3
\end{align}
then ~\eqref{recur1} and~\eqref{recur2} remain in the balls $\mathcal{B}_1$ and $\mathcal{B}_2$, respectively and hence,~\eqref{eq2} holds for  $k=0,\dots,T/\Delta t$.

{\bf Base case: $\mathbf{k=1}$.} Note that in this case, $E_0 = 0$ and therefore, based on~\eqref{Delta}, we have $\Delta t\leq \bar{t}_1$ and $\Delta t\leq \bar{t}_2$. This implies that both~\eqref{recur1} and~\eqref{recur2} are upper bounded by $\bar{c}$ and, based on~\eqref{eq2}, we have
\begin{align}
\label{eq2_1}
E_1 &\leq (1+(2r+1)L\Delta t)E_{0} + \left((r+1)c_1+c_2+c_3\right)L\Delta t^2 \nonumber\\
&= \left((r+1)c_1+c_2+c_3\right)L\Delta t^2 \leq \bar{c}
\end{align}
where the last inequality is due to~\eqref{Delta}.

{\bf Inductive step.} Suppose that we have
\begin{subequations}
\begin{align}
&(2r+1) E_{k-1}+((r+1)c_1+c_2)\Delta t\leq \bar{c}\\
&c_3\Delta t\leq \bar{c}
\end{align}
\end{subequations}
for  $k = 0,\dots, m-1$. This implies that~\eqref{eq2} holds for  $k = 1,\dots, m$. With some algebra, one can verify that
\begin{align}
&E_m\leq \left((r+1)c_1+c_2+c_3\right)L\Delta t^2\sum_{i=0}^{m-1}(1+(2r+1)L\Delta t)^i\nonumber\\
&\leq \left((r+1)c_1+c_2+c_3\right)L\Delta t^2\cdot \frac{(1+(2r+1)L\Delta t)^m-1}{(2r+1)L\Delta t}\nonumber\\
&\leq \frac{(r+1)c_1+c_2+c_3}{2r+1}\left((1+(2r+1)L\Delta t)^{T/\Delta t}-1\right)\Delta t\nonumber\\
&\leq \frac{(r+1)c_1+c_2+c_3}{2r+1}\left(e^{(2r+1)LT}-1\right)\Delta t\leq\bar{c}
\end{align}
which completes the proof of the first statement. {To prove the second statement, note that the above analysis leads to}
\begin{align*}
\sup_{0\leq k\leq T/\Delta t}E_k\leq \frac{(r+1)c_1+c_2+c_3}{2r+1}\left(e^{(2r+1)LT}-1\right)\Delta t
\end{align*}
assuming that $\Delta t\leq \bar{t}_3$. Due to the fact that $\bar{t}_3>0$ and is independent of $\Delta t$, we have
\begin{align}
\lim\limits_{\Delta t\to 0^+}\sup_{0\leq k\leq T/\Delta t}E_k = 0 
\end{align}
thereby completing the proof of the convergence.\qed

\section{Properties of System's Jacobian}
\label{sec:sys_Jac}
{
In this section, we additionally assume that the objective function $f(x,t)$ is twice continuously differentiable in $x$. For the constraint functions $h=(h_1,h_2,\ldots,h_m)$, the corresponding Hessian matrices $H_1,H_2,\ldots,H_m \in \mathbb{R}^{n\times n}$ are the second partial derivative of $h$ with respect to $x$. The second-order derivative operator of $h$, denoted by $H$, is now regarded as the $m$-tuple $H=(H_1,\ldots,H_m)$. For $\mu \in \mathbb{R}^m$ and $x\in\mathbb{R}^n$,  $\mu H$ denotes $\mu_1 H_1+\ldots+\mu_m H_m$ and $x^\top H x$ denotes $x^\top H_1 x+\ldots+x^\top H_m x$. For $M_1, M_2\in \mathbb{R}^{n\times n}$, $M_1 H M_2 x$ denotes $[M_1 H_1 M_2 x, \ldots, M_1 H_m M_2 x]$.
In addition, we have the identity $\mu x^\top H x=x^\top \mu H x$. \\
\indent Consider the time-invariant optimization problem:
\begin{equation} 
\inf_{x\in \mathbb{R}^n} ~ f(x) ~~~\text{s.t.}~~~ h(x) = d
\label{eq:invariantoptimization}
\end{equation}
where $h(x) = [h_1(x),\hdots,h_m(x)]^T$ and $d = [d_1,\hdots,d_m]^T$. The corresponding ODE is given by
\begin{align} 
\dot{x} = -\frac{1}{\alpha} \left[I-\mathcal{J}(x)^\top(\mathcal{J}(x)\mathcal{J}(x)^\top)^{-1}\mathcal{J}(x)\right]\nabla f(x).
\label{eq:odeinvariant}
\end{align}
Let $z$ be a local minimum of \eqref{eq:invariantoptimization} satisfying the first-order necessary and second-order sufficient optimality conditions:
\begin{subequations}
\begin{align}
 &h(z)=d, \ \mathcal{J}(z)\mathcal{J}(z)^\top \text{ is invertible}\\
& \nabla f(z)+\mu\mathcal{J}(z)= 0,\ w^\top \left(\nabla^2f(z)+\mu\mathcal{H}(z)\right)w>0   \label{sosc}
\end{align}
\end{subequations}
for some $\mu \in \mathbb{R}^m$ and every nonzero vector $w$ such that $\mathcal{J}(z)^\top w=0$. Note that $z$ is an equilibrium point of the system~\eqref{eq:odeinvariant}. Let the right-hand side of~\eqref{eq:odeinvariant} be denoted by $p(x)$: 
\begin{equation}\label{ode_single2}
p(x):= -\frac{1}{\alpha} \mathcal{P}(x) \nabla_x f(x)
\end{equation}
where $\mathcal{P}(x)= I-\mathcal{J}(x)^\top(\mathcal{J}(x)\mathcal{J}(x)^\top)^{-1}\mathcal{J}(x)$ and let $\mathcal{J}_p(z)$ denote the Jacobian of $p(x)$.
\begin{theorem}\label{lem:psd}
It holds that
\begin{align}
\label{eq_jac_123}
\mathcal{J}_p(z)=-\frac{1}{\alpha}\left(\nabla^2f(z)+\mu\mathcal{H}(z)\right)\mathcal{P}(z).
\end{align}
Moreover, $\mathcal{J}_p(z)$ has $n-m$ eigenvalues with negative real parts and $m$ zero eigenvalues.
\end{theorem}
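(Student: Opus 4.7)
My plan is to compute $\mathcal{J}_p(z)$ directly from~\eqref{ode_single2} via the product rule and then analyze its spectrum by reducing the problem to a symmetric quadratic form on the tangent space $\ker\mathcal{J}(z)$, where the second-order sufficient condition~\eqref{sosc} applies.

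First I would differentiate $p(x) = -\tfrac{1}{\alpha}\mathcal{P}(x)\nabla_x f(x)$ in an arbitrary direction $v\in\mathbb{R}^n$. The chain rule gives
\[
\mathcal{J}_p(z)\,v \;=\; -\tfrac{1}{\alpha}\Bigl(\bigl[D\mathcal{P}(z)(v)\bigr]\nabla f(z) \;+\; \mathcal{P}(z)\nabla^2 f(z)\,v\Bigr).
\]
The key simplification uses the first-order KKT identity $\nabla f(z) = -\mathcal{J}(z)^{\!\top}\mu$, so in particular $(\mathcal{J}(z)\mathcal{J}(z)^{\!\top})^{-1}\mathcal{J}(z)\nabla f(z) = -\mu$ and $\mathcal{P}(z)\nabla f(z) = 0$. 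Setting $G(x) := \mathcal{J}(x)\mathcal{J}(x)^{\!\top}$ and writing the directional derivative as
\[
\delta\mathcal{P} \;=\; -(\delta\mathcal{J}^{\!\top})G^{-1}\mathcal{J} \;-\; \mathcal{J}^{\!\top}\,\delta(G^{-1})\,\mathcal{J} \;-\; \mathcal{J}^{\!\top}G^{-1}(\delta\mathcal{J}),
\]
with $\delta(G^{-1}) = -G^{-1}(\delta G)G^{-1}$, I would substitute $\nabla f(z) = -\mathcal{J}(z)^{\!\top}\mu$ and track the cancellations. The three contributions collapse to
\[
\bigl[D\mathcal{P}(z)(v)\bigr]\nabla f(z) \;=\; (I-\mathcal{J}^{\!\top}G^{-1}\mathcal{J})(\delta\mathcal{J}^{\!\top})\mu \;=\; \mathcal{P}(z)\,(\mu\mathcal{H}(z))\,v,
\]
where the last identity follows from the symmetry of each $H_k$: the $i$-th entry of $(\delta\mathcal{J}^{\!\top})\mu$ equals $\sum_k\mu_k(H_k)_{ij}v_j$, i.e.\ the $i$-th entry of $(\mu\mathcal{H}(z))v$. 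Combining these, $\mathcal{J}_p(z) = -\tfrac{1}{\alpha}\,\mathcal{P}(z)[\nabla^2 f(z) + \mu\mathcal{H}(z)]$; this coincides with~\eqref{eq_jac_123} up to the order of the two factors, and since $\mathcal{P}(z)^2 = \mathcal{P}(z)$ together with $\mathrm{spec}(AB) = \mathrm{spec}(BA)$ for square matrices, both forms share the same eigenvalues (both reduce to the spectrum of the symmetric matrix $\mathcal{P}(z) W \mathcal{P}(z)$, where $W := \nabla^2 f(z) + \mu\mathcal{H}(z)$).

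It remains to analyze the eigenvalues of $-\tfrac{1}{\alpha}\mathcal{P}(z)W\mathcal{P}(z)$. Since $\mathcal{J}(z)$ has full row rank $m$, the kernel $\ker\mathcal{J}(z)$ has dimension $n-m$; pick an orthonormal basis $U\in\mathbb{R}^{n\times(n-m)}$ of this kernel and an orthonormal basis $V\in\mathbb{R}^{n\times m}$ of $\mathrm{range}(\mathcal{J}(z)^{\!\top})$. Then $\mathcal{P}(z) = UU^{\!\top}$, and in the orthonormal frame $[U\;V]$ the matrix $\mathcal{P}(z)W\mathcal{P}(z)$ is block-diagonal with blocks $U^{\!\top}WU$ and $0_{m\times m}$. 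The second-order sufficient condition~\eqref{sosc} is exactly the statement that $U^{\!\top}WU\succ 0$, so $\mathcal{P}(z)W\mathcal{P}(z)$ has $n-m$ strictly positive eigenvalues and $m$ zero eigenvalues. Multiplying by $-1/\alpha<0$ produces $n-m$ strictly negative real eigenvalues (hence eigenvalues with negative real part) and $m$ zero eigenvalues of $\mathcal{J}_p(z)$, as claimed.

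The main obstacle is the bookkeeping in the first step, namely verifying the cancellations in $[D\mathcal{P}(z)(v)]\nabla f(z)$; the only genuinely non-mechanical observation there is that the terms $-\mathcal{J}^{\!\top}G^{-1}(\delta\mathcal{J})\mathcal{J}^{\!\top}\mu$ and $+\mathcal{J}^{\!\top}G^{-1}(\delta\mathcal{J})\mathcal{J}^{\!\top}\mu$ from the second and third summands exactly cancel, leaving a clean projection of $(\mu\mathcal{H}(z))v$ onto $\ker\mathcal{J}(z)$. Everything after that is standard linear algebra.
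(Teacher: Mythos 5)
Your proposal is correct in substance, but it takes a different route from the paper and surfaces a subtlety worth flagging. The paper does not derive \eqref{eq_jac_123} at all: it obtains the Jacobian formula by citing Corollary 1 of \cite{luenberger1972gradient}, and then handles the spectrum by (i) noting $\mathcal{J}_p(z)\mathcal{J}(z)^\top=0$ to get $m$ zero eigenvalues and (ii) introducing $\Omega=\mathcal{P}(z)\bigl(\nabla^2 f(z)+\mu\mathcal{H}(z)\bigr)\mathcal{P}(z)$, using \eqref{sosc} to conclude $\Omega$ has $n-m$ positive eigenvalues, and transferring the spectrum to $-\alpha\mathcal{J}_p(z)$ via $\mathcal{P}^2=\mathcal{P}$. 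Your spectral argument (orthonormal frame $[U\;V]$ adapted to $\ker\mathcal{J}(z)$, block-diagonalization, $U^\top WU\succ 0$ from \eqref{sosc}) is essentially the same idea made explicit, and it is fine; your added value is the self-contained derivation of the Jacobian of \eqref{ode_single2} via the product rule and the KKT identity $\nabla f(z)=-\mathcal{J}(z)^\top\mu$, replacing the external citation, and your cancellation bookkeeping is correct. The one point of friction is that your computation yields $\mathcal{J}_p(z)=-\tfrac{1}{\alpha}\mathcal{P}(z)\bigl(\nabla^2 f(z)+\mu\mathcal{H}(z)\bigr)$, i.e.\ the transpose of the displayed identity \eqref{eq_jac_123} (the two orders genuinely differ when $\mathcal{P}(z)$ and $W$ do not commute), so strictly you have not proved \eqref{eq_jac_123} as written. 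In fact the left-projected form is the one consistent with the paper's own convention for Jacobians (rows $=$ gradients of components) and with the identity $\mathcal{J}(x)p(x)\equiv 0$, which forces $\mathcal{J}(z)\mathcal{J}_p(z)=0$; the order in \eqref{eq_jac_123} is best read as a transpose-convention artifact of the cited corollary (note the paper also writes $\mathcal{J}(z)^\top w=0$ for tangent vectors). Since $W$ and $\mathcal{P}(z)$ are symmetric, the two expressions are transposes of one another and, as you observe via $\mathrm{spec}(AB)=\mathrm{spec}(BA)$ and $\mathcal{P}^2=\mathcal{P}$, they share the spectrum of $\mathcal{P}(z)W\mathcal{P}(z)$, so the eigenvalue conclusion of the theorem --- indeed the stronger statement of $n-m$ strictly negative real eigenvalues --- is fully established by your argument; just state explicitly which convention you adopt so the matrix identity you prove matches the one you claim.
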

\begin{proof}
The equation \eqref{eq_jac_123} follows from Corollary 1 in \cite{luenberger1972gradient}.
To study the eigenvalues of $\mathcal{J}_p(z) $, note that $\mathcal{J}_p(z) \mathcal{J}(z)^\top=0$. Therefore, $\mathcal{J}_p(z)$ has at least $m$ zero eigenvalues. Let $w\in\mathbb R^n$ be an arbitrary nonzero vector in the tangent plane of the manifold $\{x\ :\ h(x)=d\}$ at the point $x=z$. This means that $\mathcal{J}(z)^\top w=0$. On the other hand, the second-order sufficient optimality condition states that
$
w^\top \left(\nabla^2f(z)+\mu H(z)\right)w>0
$.
Therefore, we have
$
w^\top \Omega w>0
$,
where
\begin{align}
\Omega=&\mathcal{P}(z)\left(\nabla^2f(z)+\mu H(z)\right)\mathcal{P}(z).
\end{align}
Since $\mathcal{J}(z)$ is in the null space of the symmetric matrix $\Omega$ and $w^\top \Omega w>0$ for every $w$ that is orthogonal of $\mathcal{J}(z)$, it can be concluded that $\Omega$ has $n-m$ eigenvalues with positive real parts. 
On the other hands, the eigenvalues of $\Omega$ are the same of the eigenvalues of the matrix
\begin{align}
\left(\nabla^2f(z)+\mu H(z)\right)\mathcal{P}^2(x)=\left(\nabla^2f(z)+\mu H(z)\right)\mathcal{P}(z)
\end{align}
which is the identical to $-\alpha \mathcal{J}_p(z)$.~\end{proof}
As shown above, the eigenvalues of the Jacobian only have non-positive real parts. This explains why spurious solutions of a time-invariant optimization problem cannot be escaped using gradient-based methods, such as the ODE \eqref{eq:odeinvariant}. Now, consider its time-varying counterpart problem \eqref{eq:problem} and associated ODE \eqref{eq:ode}.
Let $z(t):[0,T]\rightarrow \mathbb R^n$ be a local solution of \eqref{eq:problem}
that satisfies the first-order necessary and second-order sufficient optimality conditions for all $t\in[0,T]$. Let $\mu(t)$ denote the corresponding Lagrange multiplier and $\mathcal{Q}(z(t))$ denote $\mathcal{J}(z(t))^\top(\mathcal{J}(z(t))\mathcal{J}(z(t))^\top)^{-1}$. Since $z(t)$ is generally not the solution of the ODE \eqref{eq:ode}, we make a change of variables $e(t)=x(t)-z(t)$ to measure the distance between $x(t)$ and $z(t)$. Then, the ODE \eqref{eq:ode} can be rewritten as 
\begin{equation} \label{eq: ODE change}
\dot{e}(t)=-\frac{1}{\alpha} \eta(e(t)+z(t),t)+\theta(e(t)+z(t))\dot{d}-\dot{z}(t)
\end{equation}
Let $\mathcal{J}_q(z(t))$ denote the Jacobian of the right-hand side of \eqref{eq: ODE change} at the point $e(t)=0$.
By taking the first-order approximation of \eqref{eq: ODE change} around $z(t)$, we have
\begin{equation} \label{eq: jacobian first order}
\dot{e}(t)=\mathcal{J}_q(z(t))e(t)+O(e^2(t)) -\dot{z}(t).
\end{equation}
\begin{theorem}\label{lem:psd2}
It holds that
\begin{equation}
\mathcal{J}_q(z(t)) =K_1(t)+K_2(t)
\end{equation}
where
\begin{subequations}
\begin{align}
K_1(t)= &-\frac{1}{\alpha}\left(\nabla^2f(z(t))+\mu(t)\mathcal{H}(z(t))\right)\mathcal{P}(z(t)),\\
K_2(t)=& \Big(P(z(t))H(z(t)) \left(\mathcal{J}(z(t)) \mathcal{J}(z(t))^\top \right)^{-1}-\mathcal{Q}(z(t))H(z(t))\mathcal{Q}(z(t)) \Big)\dot{d}(t).
\end{align}
\end{subequations}
\end{theorem}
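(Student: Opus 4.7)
\emph{Plan.} The right-hand side of \eqref{eq: ODE change} decomposes as $-\frac{1}{\alpha}\eta(e+z,t) + \theta(e+z)\dot d(t) - \dot z(t)$, and the last term does not depend on $e$. Hence $\mathcal{J}_q(z(t))$ is the sum of the Jacobians of $-\frac{1}{\alpha}\mathcal{P}(x)\nabla_x f(x,t)$ and of $\theta(x)\dot d(t)$, both evaluated at $x=z(t)$. For the first contribution, the first-order necessary condition $\nabla f(z(t),t)+\mu(t)\mathcal{J}(z(t))=0$ holds at every $t$, so the computation is pointwise identical to the one in Theorem \ref{lem:psd} (via Corollary 1 of \cite{luenberger1972gradient}) and produces $K_1(t)$. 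The remaining and novel work is to compute the Jacobian of $\theta(x)\dot d(t)$ at $x=z(t)$.

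Writing $\theta(x)\dot d = \mathcal{J}(x)^\top u(x)$ with $u(x):=(\mathcal{J}(x)\mathcal{J}(x)^\top)^{-1}\dot d(t)$, the product rule gives, for a test direction $w$,
\[
\partial_w\!\left[\theta(x)\dot d\right]=\bigl[\partial_w \mathcal{J}(x)^\top\bigr]u + \mathcal{J}(x)^\top \partial_w u.
\]
The first term equals $(uH)w$ directly from the definition of $H$, since the columns of $\partial_w\mathcal{J}(x)^\top$ are $H_i(x)w$. For the second term, I would use $\partial_w A^{-1}=-A^{-1}(\partial_w A)A^{-1}$ with $A=\mathcal{J}\mathcal{J}^\top$, writing $\partial_w(\mathcal{J}\mathcal{J}^\top)=B+B^\top$ where $B:=(\partial_w \mathcal{J})\mathcal{J}^\top$. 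A short entry-wise calculation combined with the symmetry of each $H_i$ yields $B^\top u=\mathcal{J}(uH)w$, so $\mathcal{Q}B^\top u=(\mathcal{Q}\mathcal{J})(uH)w=(I-\mathcal{P})(uH)w$ by the standard identity $\mathcal{Q}\mathcal{J}=I-\mathcal{P}$. This cancels precisely the $(I-\mathcal{P})$ component of the first term and leaves $\mathcal{P}(uH)w$. For the remaining summand, $(Bu)_i=w^\top H_i\mathcal{J}^\top u=w^\top H_i(\mathcal{Q}\dot d)$, so $\mathcal{Q}Bu$ is, as an operator on $w$, exactly the $n\times n$ matrix $\mathcal{Q}H\mathcal{Q}\dot d$ in the paper's tensor notation. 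Collecting terms yields the Jacobian $\mathcal{P}(uH)-\mathcal{Q}H\mathcal{Q}\dot d$, which is $K_2(t)$ once one recognizes $\mathcal{P}(uH)=\mathcal{P}H(\mathcal{J}\mathcal{J}^\top)^{-1}\dot d$.

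I expect the main obstacle to be notational rather than conceptual: $H=(H_1,\ldots,H_m)$ is an $m$-tuple of symmetric $n\times n$ matrices, and compact expressions such as $\mathcal{P}H(\mathcal{J}\mathcal{J}^\top)^{-1}\dot d$ and $\mathcal{Q}H\mathcal{Q}\dot d$ must be parsed with care, the $m$-tuple index of $H$ being contracted against the factor to the right of $H$ (namely $(\mathcal{J}\mathcal{J}^\top)^{-1}\dot d\in\mathbb R^m$ in the first case and $\mathcal{Q}\dot d\in\mathbb R^n$ in the second, via the rightmost $\mathcal{Q}$). Once this bookkeeping is in place, the argument is a routine algebraic exercise relying on $\mathcal{Q}\mathcal{J}=I-\mathcal{P}$, $\mathcal{J}\mathcal{Q}=I$, the symmetry of Hessians, and the matrix-inverse derivative formula; no new structural ingredient beyond Theorem \ref{lem:psd} is required.
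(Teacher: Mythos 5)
Your proposal is correct and follows essentially the same route as the paper: split $\mathcal{J}_q(z(t))$ into the projected-gradient part, which reduces pointwise to Theorem \ref{lem:psd} via the first-order condition at $z(t)$, and the Jacobian of $\theta(x)\dot d$, computed by the product rule together with the derivative-of-inverse formula for $(\mathcal{J}\mathcal{J}^\top)^{-1}$. The only difference is bookkeeping---you differentiate directionally and use the cancellation $\mathcal{Q}\mathcal{J}=I-\mathcal{P}$ in coordinate-free form, while the paper differentiates coordinate-by-coordinate and assembles the columns into $K_2(t)$---and your resulting matrix agrees column-for-column with the paper's.
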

\begin{proof}
The computation of $K_1(t)$ is similar to that of Theorem \ref{lem:psd}. Because of the tensor nature of $H$ it is convenient to differentiate with respect to each component separately. For the component $z_1(t)$, we have
\begin{align*}
&\frac{d}{d z_1(t)} \mathcal{Q}(z(t))\dot{d}(t)\\
=&H_1(z(t)) \left(\mathcal{J}(z(t)) \mathcal{J}(z(t))^\top \right)^{-1} \dot{d}(t) \\
&- \mathcal{J}(z(t))^\top\left(\mathcal{J}(z(t)) \mathcal{J}(z(t))^\top \right)^{-1}  \Big( H_1(z(t))\mathcal{J}(z(t))^\top + \mathcal{J}(z(t)) H_1(z(t))  \Big)\left(\mathcal{J}(z(t)) \mathcal{J}(z(t))^\top \right)\dot{d}(t)\\
=& \Big(P(z(t))H_1(z(t)) \left(\mathcal{J}(z(t)) \mathcal{J}(z(t))^\top \right)^{-1}-\mathcal{Q}(z(t))H_1(z(t))\mathcal{Q}(z(t)) \Big)\dot{d}(t).
\end{align*}
Similar expressions apply to derivatives with respect to other components. These columns can be combined into the matrix 
\[\Big[\frac{d}{d z_1(t)} \mathcal{Q}(z(t)),\ldots,\frac{d}{d z_n(t)} \mathcal{Q}(z(t))\Big]\dot{d}(t).\]
This matrix is $K_2(t)$.
\end{proof}
Notice that $K_1(t)$ has only eigenvalues with non-positive reals (due to Theorem \ref{lem:psd}) but $K_2(t)$ may have eigenvalues with positive reals depending on the time-variation. Thus, the time variation could potentially make the linear system $\dot{\bar e}(t)=\mathcal{J}_q(z(t)) \bar e(t)$ unstable. If $O(e^2(t)) -\dot{z}(t)$ is not large, we may expect that the solution of \eqref{eq: jacobian first order} will behavior similarly to  $\dot{\bar e}(t)=\mathcal{J}_q(z(t)) \bar e(t)$ and cannot stay around the point $0$. Thus, the time-variation may provide the opportunity to escape the spurious local trajectory $z(t)$. Note that the linearization does not always provide a concrete answer for time-varying ODEs, but this result offers an insight into how the data variation changes the eigenvalues of the Jacobian along a trajectory close to a KKT trajectory.
}
	\begin{figure*}
      \centering
      \subcaptionbox{Inequalities in function of $\alpha,\beta$ guaranteeing absence of spurious trajectories.\label{fig:ineq}}
        {  \includegraphics[width=0.38\linewidth]{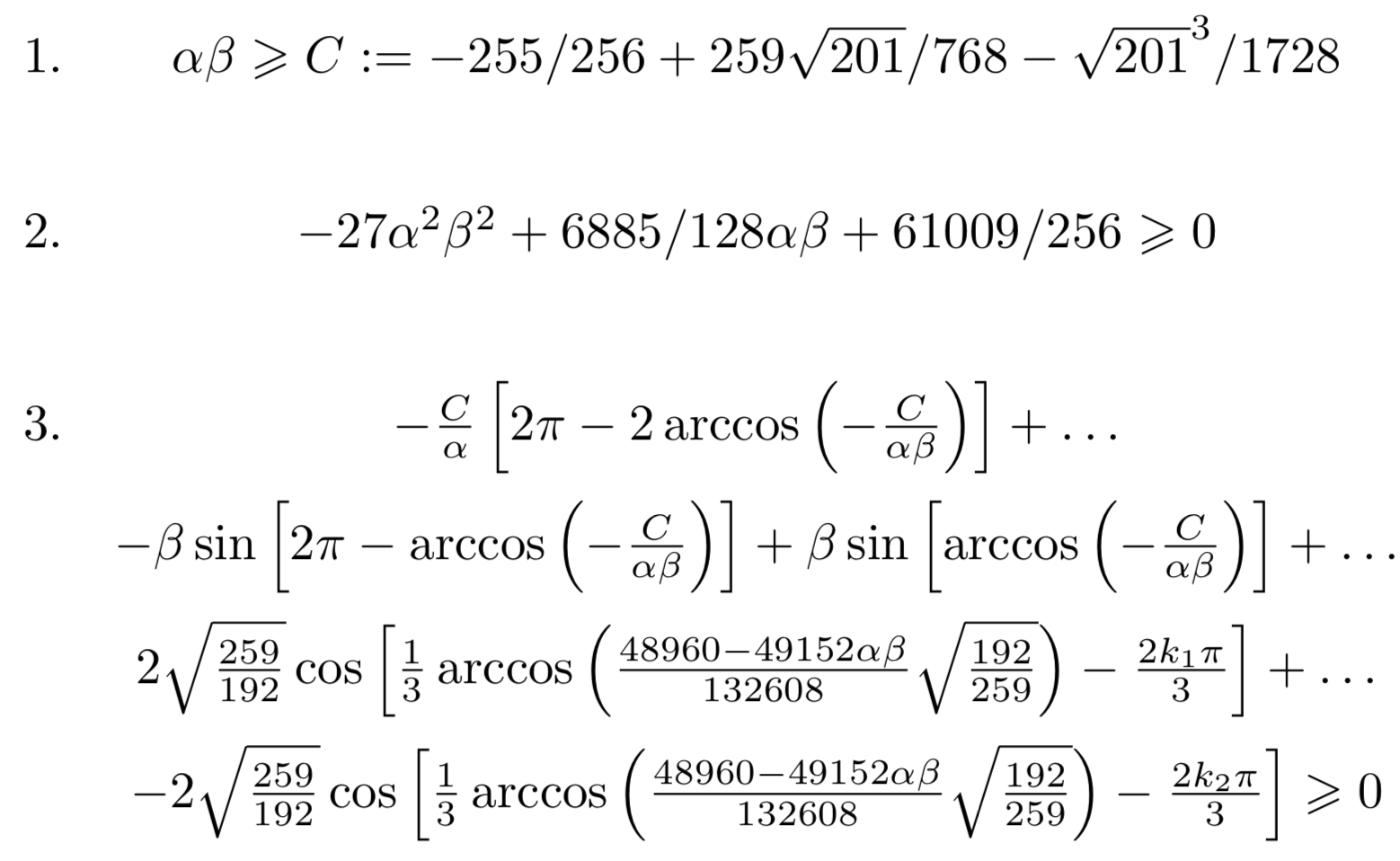}}\hspace{3mm}
      \subcaptionbox{Sufficient condition in blue in function of $\alpha,\beta$ for absence of spurious trajectories.\label{fig:graph}}
        { \includegraphics[width=0.39\linewidth]{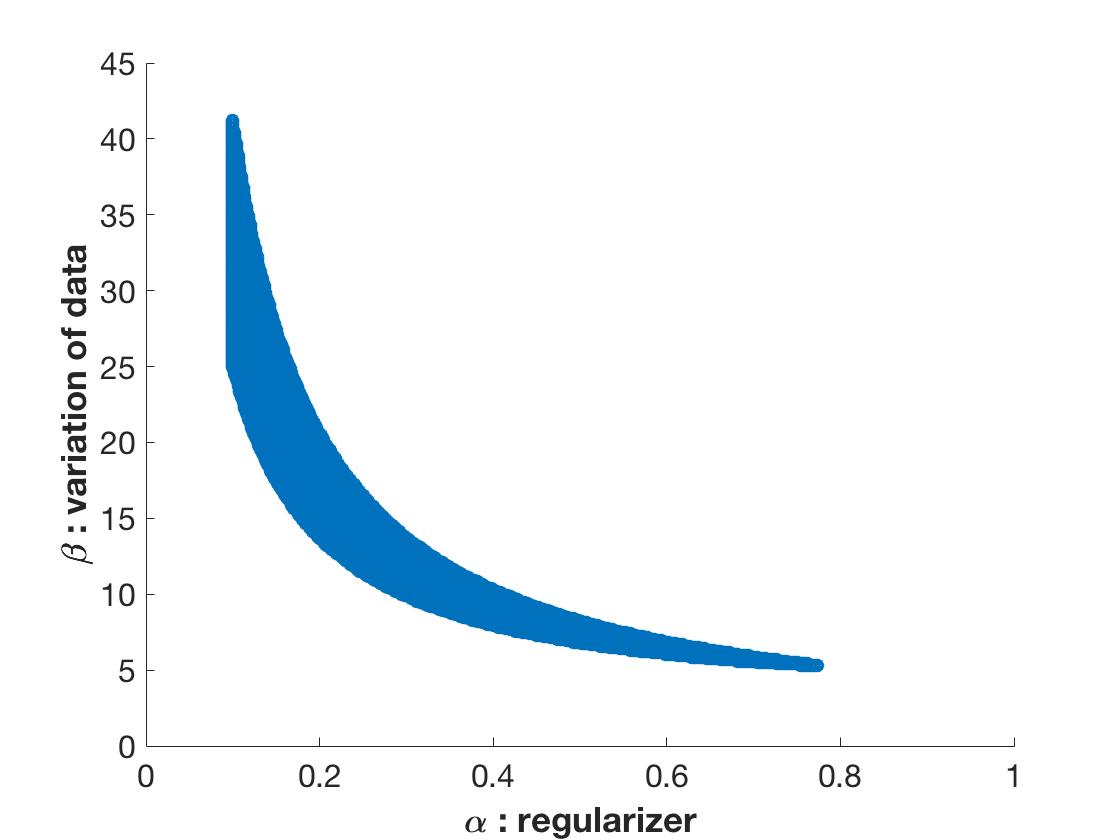}}
   \subcaptionbox{Non-spurious trajectory for $\alpha = 0.4$ and $\beta = 10$.\label{fig:non-spurious}}
        {  \includegraphics[width=0.38\linewidth]{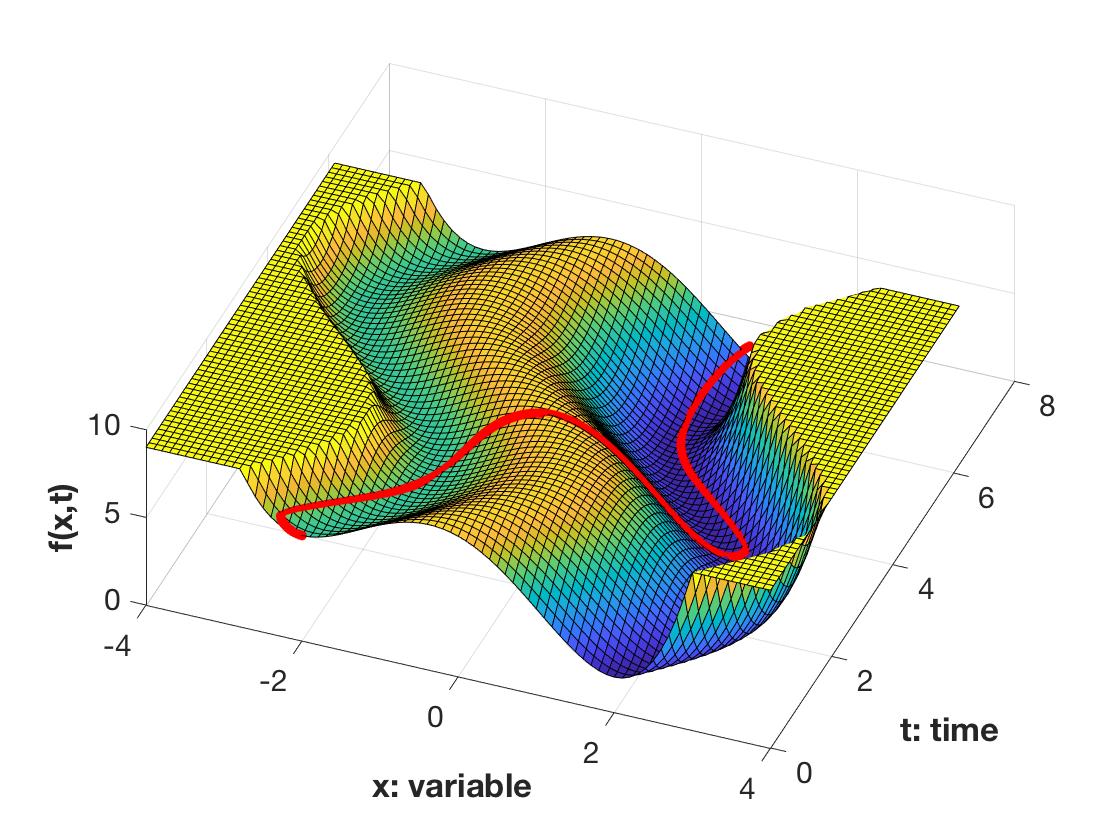}}\hspace{3mm}
      \subcaptionbox{Spurious trajectory for $\alpha = 0.2$ and $\beta = 5$.\label{fig:spurious}}
        { \includegraphics[width=0.39\linewidth]{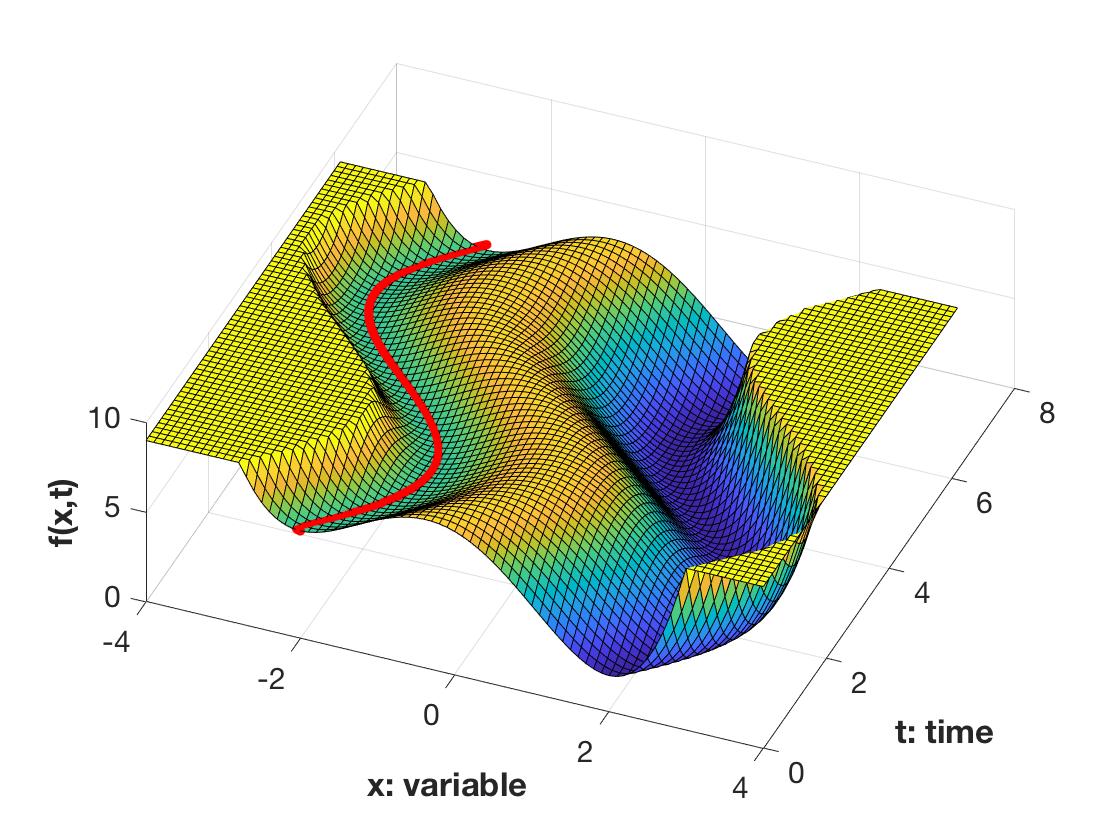}}
      \caption{Analysis of Example \ref{exp1}.}\vspace{-2mm}
    \end{figure*}

\section{Conclusion}\label{sec:Conclusion}
In this work, we study the landscape of time-varying nonconvex optimization problems. We introduce the notion of spurious local trajectory as a counterpart to the notion of spurious local minima in the time-invariant optimization. The key insight to this new notion is the fact that a regularized version of the time-varying optimization problem is naturally endowed with an ordinary differential equation (ODE) at its limit. This close interplay enables us to study the solutions of this ODE to certify the absence of the spurious local trajectories in the problem. Through different case studies and theoretical results, we show that a time-varying optimization can have multiple spurious local minima, and yet its landscape can be free of spurious local trajectories. We further show that the variation of the landscape over time is the main reason behind the absence of spurious local trajectories.  We also study the Jacobian of the ODE along a local minimum trajectory and show how its eigenvalues  change in response to the data variation. {Avenues for future work include the extension of the notion of spurious local trajectories to time-varying optimization over an infinite-time horizon. Furthermore, it would be worthwhile to
derive necessary and sufficient conditions for the absence of spurious local trajectories in more general
settings. }
\appendix
{\begin{lemma}\label{l_app}
We have $\|x_k^{\Delta t}-x_{k-1}^{\Delta t}\| = O(\sqrt{\Delta t})$ for every $k=0,\dots,{\lceil T/\Delta t\rceil}$.
\end{lemma}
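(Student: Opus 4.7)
The plan is to exploit the stiff quadratic regularization in the subproblem \eqref{obj_reg}: its penalty coefficient $\alpha/(2\Delta t)$ blows up as $\Delta t \to 0$, so any uniform upper bound on the regularized objective value at $x_k^{\Delta t}$ immediately forces $\|x_k^{\Delta t} - x_{k-1}^{\Delta t}\|^2 = O(\Delta t)$, which is exactly the claim. No KKT machinery or non-singularity of the Jacobian is needed for this preliminary step.

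Concretely, since $x_k^{\Delta t}$ lies on a discrete local trajectory, Assumption~\ref{assum11} provides a uniform upper bound $R_2$ on its regularized objective value, so
\[
f(x_k^{\Delta t}, t_k) + \alpha \, \frac{\|x_k^{\Delta t} - x_{k-1}^{\Delta t}\|^2}{2\Delta t} \;\leq\; R_2.
\]
The standing hypothesis $f(\cdot,\cdot) \geq M$ then isolates the penalty term:
\[
\alpha \, \frac{\|x_k^{\Delta t} - x_{k-1}^{\Delta t}\|^2}{2 \Delta t} \;\leq\; R_2 - M,
\]
which rearranges to $\|x_k^{\Delta t} - x_{k-1}^{\Delta t}\| \leq \sqrt{2(R_2-M)/\alpha}\,\sqrt{\Delta t}$, with a constant depending only on $\alpha$, $R_2$, and $M$ and in particular uniform in $k$ and $\Delta t$.

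There is no substantive obstacle; the proof is essentially two lines of algebra once the upper bound on the regularized objective is combined with the lower bound on $f$. The interest of the lemma lies downstream: its a priori $O(\sqrt{\Delta t})$ estimate supplies exactly the smallness needed to close the bootstrap that sharpens the bound to $O(\Delta t)$ in Lemma~\ref{lem:cont2}. Indeed, projecting the KKT stationarity equation onto the null space of $\mathcal{J}(x_k^{\Delta t})$ and its orthogonal complement and using a second-order Taylor expansion of each constraint (together with Assumption~\ref{assum2}) yields only an implicit estimate of the form $u \leq C_1 \Delta t + C_2 u^2$ with $u = \|x_k^{\Delta t} - x_{k-1}^{\Delta t}\|$, which without an a priori smallness of $u$ leaves open the spurious branch $u \gtrsim 1/C_2$; Lemma~\ref{l_app} rules this branch out for sufficiently small $\Delta t$ and thereby unlocks the sharper estimate.
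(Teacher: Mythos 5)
Your proof is correct and is essentially the paper's own argument: both bound the regularized objective at $x_k^{\Delta t}$ via Assumption~\ref{assum11}, subtract the uniform lower bound on $f$, and isolate the penalty term $\alpha\|x_k^{\Delta t}-x_{k-1}^{\Delta t}\|^2/(2\Delta t)$ to obtain the $O(\sqrt{\Delta t})$ estimate. Your version is in fact slightly more explicit about the constant $\sqrt{2(R_2-M)/\alpha}$ than the paper's.
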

\begin{proof}
Note that $f(x,t)$ is uniformly bounded from below. Furthermore, for every AKKT tuple $\left(x_0,\Delta t,\{x_k^{\Delta t}\}_{k=0}^{{\lceil T/\Delta t\rceil}}\right)$, the sequence $\{x_k^{\Delta t}\}_{k=0}^{{\lceil T/\Delta t\rceil}}$ is assumed to be uniformly bounded. {This together with Assumption~\ref{assum11} implies that 
	\begin{align}
	    f(x_k^{\Delta t},t_k)+\frac{\alpha}{2\Delta t}\|x_k^{\Delta t}-x_{k-1}^{\Delta t}\|^2\leq R
	\end{align}
	for some $R<\infty$. Since $f(x_k^{\Delta t},t_k)$ is assumed to be uniformly bounded from below, this leads to $\frac{\alpha}{2\Delta t}\|x_k^{\Delta t}-x_{k-1}^{\Delta t}\|^2\leq R'$ for some $R'<\infty$, which in turn yields $\|x_k^{\Delta t}-x_{k-1}^{\Delta t}\| = O(\sqrt{\Delta t})$.}
\end{proof}}
{{\it Proof of Lemma~\ref{lem:cont2}.} Due to Lemma~\ref{l_app} and the fact that $\mathcal{J}(x)$ is continuously differentiable, one can invoke Lemma~\ref{lem:fun} to show that there exist constants $\bar{t},c_1,c_2>0$ such that the following statements hold, provided that $\Delta t\leq \bar{t}$:
\begin{itemize}
	\item[1.] 
	\begin{sloppypar}
		Consider a sequence $\{\tilde{x}_{i,k}^{\Delta t}\}_{i=1}^r$ constructed similar to~\eqref{jacob}. Due to Assumption~\ref{assum2} and Lemma~\ref{l_app}, it can be verified that there exist $\bar{t},c_1>0$ such that $\sigma_{\min}(\mathcal{J}(\{\tilde{x}_{i,k}^{\Delta t}\}_{i=1}^r)\mathcal{J}(x_k^{\Delta t})^\top)\geq c_1$ for all $\Delta t\leq\bar{t}$. This implies that the function $g\left(\{\tilde{x}_{i,k}^{\Delta t}\}_{i=1}^r,x_k^{\Delta t},\left(\frac{d_{k}-d_{k-1}}{\Delta t}\right)\right)$ introduced in~\eqref{discrete} is well-defined and continuous for all $\Delta t\leq \bar{t}$.
	\end{sloppypar}
	\item[2.] Assumption~\ref{assum1} and twice differentiability of $d$ with respect to $t$ imply that $\left\{\{\tilde{x}_{i,k}^{\Delta t}\}_{i=1}^r,x_k^{\Delta t}\right\}$ and $\left(\frac{d_{k}-d_{k-1}}{\Delta t}\right)$ belong to a compact set. Combined with the continuity of $g(\cdot)$, this implies that
	\begin{align}\label{eqg}
	\left\|g\left(\{\tilde{x}_{i,k}^{\Delta t}\}_{i=1}^r,x_k^{\Delta t},\left(\frac{d_{k}-d_{k-1}}{\Delta t}\right)\right)\right\|\leq c_2
	\end{align}
	for some $c_2>0$.
	\item[3.] Similar to~\eqref{discrete}, one can verify that the following equality holds:
\begin{align*} 
\frac{x_k^{\Delta t}-x_{k-1}^{\Delta t}}{\Delta t} = g\left(\{\tilde{x}_{i,k}^{\Delta t}\}_{i=1}^r,x_k^{\Delta t},\left(\frac{d_{k}-d_{k-1}}{\Delta t}\right)\right)
\end{align*}
\end{itemize}
Combined with~\eqref{eqg}, this implies that $\|x_k^{\Delta t}-x_{k-1}^{\Delta t}\|\leq c_2\Delta t$ and the proof is complete.$\hfill\square$}

{\it Proof of Lemma~\ref{lem:cont}.} Consider a sequence $\{s_n\}_{n=1}^\infty$ such that $s_n>0$ and $\lim_{n\to\infty}s_n = 0$. Furthermore, without loss of generality, we assume that $T/s_n$ is a natural number for every $n=1,\dots,\infty$.
Given any $n$, consider a AKKT tuple $(x_0,s_n,\{x_k^{s_n}\}_{k=0}^\infty)$ and define a vector-valued function $\tilde{x}_{s_n} : [0,{T}]\to \mathbb{R}^n$ whose $i^{\mathrm{th}}$ element is the spline interpolation of the $i^{\mathrm{th}}$ elements of the vectors $\{x_0^{s_n},x_1^{s_n},\dots, x_{T/s_n}^{s_n}\}$. Notice that this interpolation can be made in such a way that $\tilde{x}_{s_n}$ is continuously differentiable.

We prove this lemma by showing that {there exist} a continuously differentiable function $\bar{x}$ and a subsequence $\{\tilde{x}_{t_{n_r}}\}_{r=1}^{\infty}$ of $\{\tilde{x}_{s_n}\}_{n=1}^{\infty}$ such that $\{\tilde{x}_{t_{n_r}}\}_{r=1}^{\infty}$ and $\{\dot{\tilde{x}}_{t_{n_r}}\}_{r=1}^{\infty}$ converge uniformly to $\bar{x}$ and $\dot{\bar{x}}$, respectively.
Note that $\tilde{x}_{s_n}$ is continuous for  $n = 1,\dots,\infty$, due to Lemma~\ref{lem:cont2}. Consider the class of functions $\mathcal{X} = \{\tilde{x}_{s_n}\ |\ n = 1,\dots,\infty\}$. $\mathcal{X}$ is uniformly bounded (due to Assumption~\ref{assum2}) and equicontinuous. Therefore, the Arzel\`a-Ascoli theorem can be invoked to show the existence of a uniformly convergent subsequence $\{\tilde{x}_{t_{n_k}}\}_{k=1}^{\infty}$. Let $\bar{x}:[0,T]\to \mathbb{R}^n$ be the limit of $\{\tilde{x}_{t_{n_k}}\}_{k=1}^{\infty}$. Now, consider the sequence $\{\dot{\tilde{x}}_{t_{n_k}}\}_{k=1}^{\infty}$. Notice that, due to the construction, $\{\dot{\tilde{x}}_{t_{n_k}}\}_{k=1}^{\infty}$ is continuous. Consider the class of functions $\bar{\mathcal{X}} = \{\dot{\tilde{x}}_{t_{n_k}}\ |\ k = 1,\dots,\infty\}$. Similar to the previous case, $\bar{\mathcal{X}}$ is uniformly bounded and equicontinuous. Therefore, another application of Arzel\`a - Ascoli theorem implies that $\{\dot{\tilde{x}}_{t_{n_k}}\}_{k=1}^{\infty}$ has a subsequence $\{\dot{\tilde{x}}_{t_{n_r}}\}_{r=1}^{\infty}$ that converges uniformly to a function $y:[0,T]\to \mathbb{R}^n$. Since $\{n_r\}_{r=1}^\infty\subseteq\{n_k\}_{k=1}^\infty$, we have that $\{{\tilde{x}}_{t_{n_r}}\}_{r=1}^{\infty}$ converges uniformly to $\bar x$. Therefore, due to Theorem 7.17 of~\cite{rudin1964principles}, we have $\dot{\bar{x}} = y$.

Finally, recall that $\{x_k^{s_n}\}_{n=1}^\infty$ is uniformly bounded and there exists a universal constant $c$ such that $\mathcal{J}(x_k^{s_n})\geq c$ for  $k=0,\dots,T/s_n$ and $n=1,\dots,\infty$. This implies that the function sequence $\{{\tilde{x}}_{t_{n_r}}\}_{r=1}^{\infty}$ is also uniformly bounded and since they converge uniformly to $\bar{x}$, one can invoke Lemma~\ref{lem:fun} to verify the existence of a universal $c'>0$ such that $c\geq c'$ and $\mathcal{J}(\bar{x}(t))\geq c'$ for every $t\in[0,T]$. $\hfill\square$

\bibliography{bib}{}
\bibliographystyle{IEEEtran}

\footnotesize

\noindent {\bf Salar Fattahi}
is an assistant professor at University of Michigan. He has received the INFORMS Data Mining Best Paper Award and been a best paper finalist for ACC 2018.

\noindent{\bf C\'edric Josz} is an assistant professor at Columbia University. He received the 2016 Best Paper Award in Springer Optimization Letters and was a finalist of the competition for best PhD thesis of 2017 organized by French Agency for Mathematics in Interaction with Industry and Society.

\noindent{\bf Yuhao Ding}
is a Ph.D. candidate at UC Berkeley. His research interests are nonlinear optimization and machine learning.

\noindent{\bf Reza Mohammadi}
is a postdoctoral scholar  at UC Berkeley. He obtained his PhD from MIT. His research interests are statistical learning, nonlinear optimization and energy.

\noindent{\bf Javad Lavaei}
is an associate professor at UC Berkeley. He is an associate editor for  IEEE Transactions on Automatic Control, IEEE Transactions on Smart Grid and  IEEE Control Systems Letters.

\noindent{\bf Somayeh Sojoudi}
is an assistant professor at UC Berkeley. She is an associate editor for IEEE Transactions on Smart Grid, IEEE Access, and Systems \& Control Letters.

\end{document}